\newcommand{\cL}{{\mathcal L}}
\numberwithin{equation}{section}
\newtheorem{theorem}{Theorem}[section]
\newtheorem{lemma}[theorem]{Lemma}
\newtheorem{proposition}[theorem]{Proposition}
\newtheorem{remark}{Remark}[section]
\newtheorem{corollary}[theorem]{Corollary}
\newtheorem{definition}[theorem]{Definition}
\newcommand{\be}{\begin{equation}}
\newcommand{\ee}{\end{equation}}
\newcommand\bes{\begin{eqnarray}}
\newcommand\ees{\end{eqnarray}}
\newcommand{\bess}{\begin{eqnarray*}}
\newcommand{\eess}{\end{eqnarray*}}
\newcommand\inv{\frak i}
\DeclareMathOperator*{\einf}{ess\,inf}
\DeclareMathOperator{\dist}{dist}
\newcommand{\id}{\textnormal{id}}
\newcommand\eps{\varepsilon}
\def\R{\mathbb{R}}
\def\Z{\mathbb{Z}}
\def\N{\mathbb{N}}
\def\cN{\mathcal{N}}
\author[1]{O. Cabrera}
\author[2]{S. Cingolani~\orcidlink{0000-0002-3680-9106}}
\author[3]{T. Weth~\orcidlink{0000-0001-5347-8057}}
\affil[1,3]{Institut für Mathematik, Goethe-Universität Frankfurt am Main, Robert-Mayer-Str. 10, D-60629 Frankfurt am Main, Germany.}
\affil[2]{Dipartimento di Matematica, Universit\`{a} degli Studi di Bari Aldo Moro, Via E. Orabona 4, 70125, Bari, Italy}
\title{A new framework for Ljusternik-Schnirelmann theory and its application to planar Choquard equations}
\begin{document}

	\date{}
	
	\maketitle

\begin{abstract}
  {We consider the planar logarithmic Choquard equation
    $$
    - \Delta u + a(x)u + (\log|\cdot| \ast u^2)u = 0,\qquad \text{in } \R^2
    $$
    in the strongly indefinite and possibly degenerate setting where no sign condition is imposed on the linear potential $a \in L^\infty(\R^2)$. In particular, we shall prove the existence of a sequence of high energy solutions to this problem in the case where $a$ is invariant under $\Z^2$-translations.
    The result extends to a more general $G$-equivariant setting, for which we develop a new variational approach which allows us to find critical points of Ljusternik-Schnirelmann type. In particular, our method resolves the problem that the energy functional $\Phi$ associated with the logarithmic Choquard equation is only defined on a subspace $X \subset H^1(\R^2)$ with the property that $\|\cdot\|_X$ is not translation invariant. The new approach is based on a new $G$-equivariant version of the Cerami condition and on deformation arguments adapted to a family of suitably constructed scalar products $\langle \cdot, \cdot \rangle_u$, $u \in X$ with the $G$-equivariance property $\langle g \ast v , g \ast w \rangle_{g \ast u} = \langle v , w \rangle_u.$
}
	\end{abstract}		

	\medskip

\noindent\textbf{Keywords:} Planar nonlinear Choquard equation, logarithmic kernel, indefinite coefficient, Ljusternik-Schnirelmann, Nehari manifold, Cerami condition

\smallskip
\noindent\textbf{Mathematics Subject Classification:} 35J47, 35J20.

\section{Introduction}

In this paper we are concerned with the logarithmic Choquard equation
\begin{equation}\label{eq:choquard_equation}
	- \Delta u + a(x) u + (\log|\cdot| \ast u^2)u = 0, \qquad \text{in } \R^2,
\end{equation}
with potential $a \in L^{\infty}(\R^2)$, which arises as a reduced equation for the Schrödinger-Poisson system\footnote{In the literature, the system is also called Schrödinger-Newton system or Schrödinger-Maxwell system.}
\begin{equation}\label{eq:schroedinger-newton-system}
\left\{
\begin{aligned}
- \Delta u &+ a(x)u +  \gamma \,w u = 0,\\
&\Delta w = u^2
\end{aligned}
 \right.
\qquad \text{in $\R^2$}
\end{equation}
after inverting the second equation by means of convolution with the Newtonian potential $x \mapsto \Phi_2(x)=\frac{1}{2\pi} \log |x|$. Here, $\gamma>0$ is a parameter which can be adjusted after rescaling. The Schr\"odinger-Newton
system has been derived in $\mathbb{R}^3$ by R. Penrose in \cite{Pe1, Pe2} in his description of the self-gravitational collapse of a quantum mechanical
system (see also \cite{lieb}). In this case, inversion of the second equation via convolution with the Coulomb potential $\Phi_3(x)= \frac{1}{4 \pi |x|}$
 leads to the classical Choquard equation, which, in the case $a \equiv \lambda>0$, was introduced by Pekar \cite{pekar} in 1954 in a quantum mechanical model of a polaron at rest and then later by Choquard in 1976 in a context of self-trapping of an electron. For more information and a comprehensive list of references, we refer the reader to the survey \cite{mvs}.	

We stress that, differently from the Coulomb potential, the Newton potential in $\R^2$ is sign-changing and singular both at at zero and at infinity. 
As a consequence, the planar nonlocal equation $(\ref{eq:choquard_equation})$ presents special challenges and requires a tailor-made functional-analytic setting which takes both the translation invariance of the convolution term and the growth of the logaritmic convolution potential at infinity into account. In particular, the applicability of variational tools is not straightforward, because the usual Sobolev space $H^1(\R^2)$ is no suitable environment for a direct variational approach to (\ref{eq:choquard_equation}), while the logarithmic convolution term also leads to nonstandard variational geometries.

Due to these difficulties, a rigorous study of (\ref{eq:schroedinger-newton-system}) and (\ref{eq:choquard_equation}) remained elusive for some time.
In the literature, apart from some numerical results in \cite{HMT},
first existence and uniqueness results of spherically symmetric solutions of $(\ref{eq:choquard_equation})$
were proved by Stubbe and Vuffray \cite{CSV}, for $a \equiv 1$, using
shooting methods for the associated ODE system.
 In 
\cite{stubbe}, Stubbe studied (\ref{eq:choquard_equation}) with a positive constant potential $a$ by introducing a 
smaller Hilbert space $X \subset H^1(\R^2)$
by setting 
\begin{gather*}
	X= \left\{u \in H^1(\R^2)\mid \int_{\R^2}\log(1+|x|) |u(x)|^2 \,dx <
	\infty \right\},
\end{gather*}
endowed with the norm
\begin{equation}\label{eq:X_norm}
	\| u \|_X^2 := \| u \|_{H^1(\R^2)}^2 + |u|_*^2 \qquad |u|_*^2 := \int_{\R^2} \log(1+|x|)u^2(x) dx.
\end{equation}
Using rearrangement inequalities, Stubbe proved that there is a unique positive and radially symmetric ground state to 
 $(\ref{eq:choquard_equation})$. Moreover, in \cite{Masaki2} Masaki proved that the Cauchy problem associated to the time-dependent Schrödinger-Poisson system is globally well-posed in the space $X$, see also \cite{Masaki}.

Even if the space $X$ provides a reasonable variational framework, its
norm does not detect the translation invariance of the convolution term in (\ref{eq:choquard_equation}). Moreover, the quadratic part of the energy functional associated to 
 $(\ref{eq:choquard_equation})$
is not coercive on $X$.  These difficulties
enforced the implementation of new variational ideas and estimates to
treat the nonlinear Schr\"odinger equation with nonlocal nonlinearities
involving logarithmic type convolution potential.
Inspired by the variational framework in \cite{stubbe}, the second and third authors  showed in \cite{CW} that, when $a$ is a positive $\mathbb{Z}^2$-periodic potential, the equation \eqref{eq:choquard_equation} admits a positive ground state, which is obtained by minimizing the corresponding energy functional over the Nehari set. In fact, this existence result is proved in \cite{CW} for a more general version of (\ref{eq:choquard_equation}) with a nonlinear local term, i.e., 
for
\begin{equation}\label{eq:choquard_equation2}
- \Delta u +  a(x)u + \gamma (\log|\cdot| \ast u^2)u = b |u|^{p-2}u \qquad \text{in $\:\R^2$},
\end{equation}
where  $p \geq 4$,  $b \geq 0$. In particular, \cite{CW} provides a variational characterization of least energy solutions of \eqref{eq:choquard_equation2}, and it is shown that these solutions do not change sign and are unique up to translations if $a$ is a positive constant. Successively, the sharp asymptotics and nondegeneracy of such ground state energy solutions has been proved by Bonheure, Van Schaftingen and the second author \cite{BoCiVa}.

Since then, the planar logarithmic Choquard equation (\ref{eq:choquard_equation}) and its generalization~(\ref{eq:choquard_equation2}) have received increasing attention and have been studied, in particular, in the more recent papers \cite{dw,c,BVS, CiJe,PWZ,liu-radulescu-tang-zhang} under various different assumptions on $a$ and $p$.

However, up to now no attempt has been made to analyze (\ref{eq:choquard_equation}) in the {\em strongly indefinite case} where no sign condition on the potential $a$ is imposed. The main purpose of the present paper is to develop a new abstract variational approach (\ref{eq:choquard_equation}) which leads to existence and multiplicity results in the strongly indefinite case, while its unified nature also allows to directly recover existing results for the case $a>0$.

Let us briefly mention the main conceptual difficulty in the case where no sign condition is imposed on $a$. In this case $0$ may be contained in the essential spectrum of the Schrödinger operator $S:=-\Delta + a$, so $S$ may not even be a Fredholm operator between classical Sobolev spaces and standard nonlinear analytic methods fail. In this paper, we shall see that an effect of {\em nonlinear compactness} (up to translations) outweighs the issues related to the linear operator $S$.

In order to explain our variational approach, let us introduce, for $a \in L^\infty(\R^2)$, the quadratic form
$$
u \mapsto q_a(u)= \int_{\R^2} |\nabla u|^2\,dx + \int_{\R^2}a(x)u^2\,dx 
$$
and the $4$-homogeneous functional
$$
u \mapsto V_0(u)= \int_{\R^2} \int_{\R^2}
\log|x-y|  u^2(x)u^2(y)\,dx dy
$$
which is easily seen to be well-defined on the space $X$ defined above. 
Then weak solutions of $(\ref{eq:choquard_equation})$ can be found as critical points of the energy functional
$$
\Phi: X \to \R, \qquad \Phi(u) = \frac{1}{2}q_a(u) + \frac{1}{4} V_0(u),
$$
and they are all contained in the associated Nehari manifold\footnote{In general, the set $\cN$ is not a manifold, but we still use this name as it is common in the literature.} 
$$
\cN:=  \{u \in X \setminus \{0\} \::\: \Phi'(u)u = 0\} = \{u \in X \setminus \{0\} \::\: q_a(u) = -V_0(u)\}.
$$
In the general case where $a$ may be negative or sign-changing, both $q_a$ and $V_0$ are neither bounded from above nor from below, and therefore the set $\cN$ may have a complicated shape. In fact, it is useful to split into the subsets
$$
\cN_\pm := \{u \in \cN\::\:  \pm V_0(u) > 0 \} \quad \text{and}\quad \cN_0:= \{u \in \cN\::\: V_0(u) = 0\}.
$$
We shall see that $\cN_\pm$ are $C^2$-submanifolds of codimension 1 of $X$, and we will set up variational principles to find critical points of $\Phi$ in
$\cN_-$, leaving the study of $\cN_+$ for future work. The main difficulties we face when investigating $\cN_\pm$ is the fact that neither one of these submanifolds is complete in general, and $\Phi$ is not coercive on $\cN_\pm$. We note that this stands in striking constrast to common applications of the Nehari manifold method. We also note that in general there may exist functions $u \in X \setminus \{0\}$ with $q_a(u)=V_0(u)=0$, so that the full ray $\{tu \::\: t>0\}$ is contained in $\cN_0$ in this case.   

A further difficulty, already present in the case of positive potential functions $a \in L^\infty(\R^2)$, is the lack of compactness caused by the translation invariance of the convolution functional $V_0$. To deal with this problem, we follow a similar strategy as in \cite{CW} and assume that $a$ is invariant under a subgroup $G$ of euclidean motions in $\R^2$. The group $G$ then also acts in a canonical way on $X$ by defining $g* u = u \circ g^{-1}$ for $u \in X$, $g \in G$, and the functional $\Phi$ remains invariant under this action. In particular, in the special case where $G= \Z^2$, we are dealing with a translation-invariant problem with lack of compactness.

In this context, a further problems arises. First, while $\Phi$ remains $G$-invariant, the norm of $X$ is not $G$-invariant. In particular, every critical point $u \in X$ of $\Phi$ is contained in an orbit $\{g\ast u\::\: g \in G\}$ of critical points which is unbounded in the norm of $X$. As a consequence, in addition to lack of compactness, we also have a lack of boundedness of Palais-Smale or Cerami sequences of $\Phi$ in $X$.

Here we recall that a sequence $(u_n)_n$ in $X$ is called a Cerami sequence for $\Phi$ at the level $c \in \R$ 
if $\Phi(u_n) \to c$ and $\|\Phi(u_n)'\|_{X^*} (1+ \|u_n\|_X) \to$ as $n \to \infty$, where $X^*$ denotes the topological dual of $X$.

In the present paper, in order to prove existence and multiplicity results in the strongly indefinite setting, we implement some new advances in $G$-equivariant critical point theory based on a compactness condition relative to the $G$-action for a suitably adapted $G$-invariant notion of Cerami sequences. 
More precisely, we develop an abstract variational method for $G$-invariant $C^{1,1}$-functionals defined on submanifolds of a generic Hilbert space $X$ which do not satisfy a suitable compactness condition with respect to $\|\cdot\|_X$, but are better adapted to a family of equivalent, but not uniformly equivalent norms $\|\cdot\|_u$, $u \in X \setminus \{0\}$.   
The typical situation occurs when the energy functional is invariant under the orthogonal action of a noncompact topological group on $X$, but the norm $\|\cdot\|_X$  on $X$ is not $G$-invariant. In this case the family of norms $\|\cdot\|_u$ should be chosen such that the following $G$-invariance property holds:
\begin{equation}
  \label{eq:G-invariant-norm-intro}
\|g\ast v\|_{g\ast u} = \|v\|_u \qquad \text{for all $u \in X \setminus \{0\}$, $v \in X$.}
\end{equation}
In the application to the logarithic Choquard equation, the construction of this $G$-invariant family of norms uses the notion of a generalized barycenter $\beta(u) \in \R^N$ of a function $u \in X$. Such a generalized barycenter has been introduced independently in \cite{bw,cp}, and it allows us to define 
$$
\|v\|_u := \|v(\cdot + \beta(u))\|_X \qquad \text{for $u, v \in X$.}
$$
The equivariance of the generalized barycenter with respect to euclidean motions then gives rise to the abstract property~(\ref{eq:G-invariant-norm-intro}). 

At this stage, we wish to point out the papers \cite{BVS,liu-radulescu-tang-zhang} which contain different strategies to recover a translation-invariant setting in the space $H^1(\R^N)$ by modifying the convolution term in (\ref{eq:choquard_equation}) and then applying an approximation procedure. In contrast, we directly work with the original equation (\ref{eq:choquard_equation}) in the space $X$ and therefore avoid the analysis of approximations. This is particularly important in the context of multiplicity results since in general it is difficult to distinguish limits of solutions to approximated problems. 

Let us now state or main existence and multiplicity results on the indefinite Choquard nonlinear equation. We start with a result where $G= \Z^2$.

\begin{theorem}\label{thm:multiplicity_translation}
	Let $a \in L^\infty(\R^2)$ be $\Z^2$-invariant. Then the equation \eqref{eq:choquard} has a sequence $(\pm u_n)_n$ of nontrivial solution pairs with $\Phi(\pm u_n) \to \infty$ as $n \to \infty$. Moreover, if $\einf_{\R^2} a > 0 $, then 
	$$ \Phi(\pm u_1) = \inf\{ \Phi(u) : u \in X \setminus \{0\}, \, \Phi'(u) = 0\}. $$ 
\end{theorem}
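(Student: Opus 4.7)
The plan is to derive Theorem~\ref{thm:multiplicity_translation} as a direct application of the abstract $G$-equivariant Ljusternik--Schnirelmann framework announced in the introduction, specialized to $G = \Z^2$ acting on $X$ by translations and combined with the antipodal $\Z_2$-action $u \mapsto -u$. The natural constraint is the Nehari submanifold $\cN_-$: it is a $C^2$-hypersurface, symmetric under $u \mapsto -u$ and $G$-invariant, and any critical point of $\Phi|_{\cN_-}$ is a critical point of $\Phi$ on $X$ via the standard Lagrange multiplier argument on Nehari-type sets, hence a weak solution of \eqref{eq:choquard_equation}.

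On $\cN_-$ one has $q_a(u) = -V_0(u)$ with $V_0(u)<0$, so
\[
\Phi(u) \;=\; \tfrac{1}{2} q_a(u) + \tfrac{1}{4} V_0(u) \;=\; -\tfrac{1}{4} V_0(u) \;=\; \tfrac{1}{4} q_a(u) \;>\; 0,
\]
and $\Phi$ is nonnegative on $\cN_-$. I would then introduce min-max values through the Krasnoselskii genus,
\[
c_k := \inf_{A \in \Gamma_k} \sup_{u \in A} \Phi(u), \qquad \Gamma_k := \bigl\{ A \subset \cN_- \::\: A \text{ compact, symmetric}, \; \gamma(A) \ge k \bigr\},
\]
and check that each $\Gamma_k$ is nonempty. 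For this one exhibits $k$-dimensional subspaces $E_k \subset X$ on which $V_0 < 0$ away from the origin---constructed, for instance, from smooth compactly supported functions with pairwise widely separated supports---and projects the unit sphere of $E_k$ radially onto $\cN_-$, producing a compact symmetric set of genus at least $k$.

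The heart of the argument is the $G$-equivariant Cerami compactness of $\Phi|_{\cN_-}$ at each level $c = c_k > 0$. Given a Cerami sequence $(u_n)$ in the adapted sense, the equivariance of the generalized barycenter $\beta$ and property~\eqref{eq:G-invariant-norm-intro} allow me to replace $u_n$ by its translate $\tilde u_n := u_n(\cdot - k_n)$ for a lattice element $k_n \in \Z^2$ close to $\beta(u_n)$; the resulting sequence is bounded in the fixed norm $\|\cdot\|_X$ and its barycenter stays in a fixed compact set of $\R^2$. A nonvanishing argument built into the definition of $\beta$ prevents $\tilde u_n$ from concentrating at infinity, so its weak limit $u \in X$ is nontrivial; the $G$-invariance of $\Phi$ then makes $u$ a critical point of $\Phi$, and a Brezis--Lieb-type splitting for $q_a$ and $V_0$ in the spirit of \cite{CW, BoCiVa} promotes weak convergence to strong convergence modulo the $\Z^2$-action. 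This is exactly what the abstract compactness condition relative to the $G$-action is designed to deliver.

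The main obstacle, and the raison d'\^etre for the new framework, is the tension between the noninvariance of $\|\cdot\|_X$ under $\Z^2$-translations and the $G$-invariance of $\Phi$: any classical pseudo-gradient flow built from $\|\cdot\|_X$ would fail to commute with the group action, and Cerami sequences themselves need not stay bounded in $\|\cdot\|_X$. The remedy is to construct pseudo-gradient vector fields from the adapted scalar products $\langle\cdot,\cdot\rangle_u$ on the tangent bundle of $\cN_-$, run the deformation along the resulting $G$-equivariant flow, and combine it with the compactness above. Once the equivariant deformation lemma and the $G$-Cerami condition are in place, the usual Ljusternik--Schnirelmann machinery yields a critical pair $\pm u_k$ with $\Phi(\pm u_k) = c_k$ for every $k$, and $c_k \to \infty$ as $k \to \infty$ follows from the classical contradiction: if $(c_k)$ remained bounded, the critical set modulo $G$ at the accumulation level would be compact and hence of finite genus, contradicting the genus lower bound forced by $\Gamma_k$.
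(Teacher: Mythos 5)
Your proposal identifies the right framework — minimax over $\cN_-$ via Krasnoselskii genus, $G$-equivariant Cerami compactness, and a deformation flow built from the adapted inner products $\langle\cdot,\cdot\rangle_u$ — and this is indeed exactly how the paper proves the theorem (it is the case $G=\Z^2$, $G_0=\{\id\}$, $\zeta\equiv 1$ of Theorem~\ref{thm:multiplicityN-}, which in turn invokes Theorem~\ref{c-k-to-infty-variant}). However, the proposal glosses over two points where the argument does not go through as stated.

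The first gap is in the compactness step. You write that translating $u_n$ by a lattice vector $k_n$ close to $\beta(u_n)$ produces a sequence that is ``bounded in the fixed norm $\|\cdot\|_X$.'' This does not follow from the barycenter equivariance or from the Cerami condition alone: the $(NC)_c$-condition only controls the product $\|\nabla_{u_n}\Phi(u_n)\|_{u_n}\,(1+N(u_n))$, and in principle $|u_n|_2$ (hence $N(u_n)$) could tend to infinity. The paper rules this out in Proposition~\ref{prop:PSC23} via a genuinely separate contradiction argument: if $|u_n|_2 \to \infty$, one passes to the $L^2$-normalizations $\hat u_n$, shows $V_0(\hat u_n)\to 0$ and extracts a nontrivial weak limit $\hat u$ with $V_0'(\hat u)=0$, which is impossible by the nonvanishing result $V_0'(u)\neq 0$ for $u\neq 0$ (Lemma~\ref{lemma:V0'notzero}, whose proof is a nontrivial scaling argument). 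Without this step the boundedness claim is unsupported, and the whole compactness chain collapses. Likewise, identifying the limit as lying in $\cN_-$ rather than $\cN_0$ requires the level $c>0$ and the relation $\Phi\equiv 0$ on $\cN_0$; you do not mention this, but it is needed because $\cN_-$ is not closed.

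The second gap is structural: the abstract deformation lemma of this paper requires completeness of $\Phi^{-1}([c-2\eps_0,c+2\eps_0])$ in $\cN_-$ with respect to the adapted metric $d$, and this is a real issue since $\cN_-$ itself is not complete. The paper devotes Lemma~\ref{lemma:dcomplete} to showing that $\cN_-\cap\Phi^{-1}([a,b])$ is $d$-complete for $0<a\le b<\infty$, again using that $\Phi$ is bounded away from zero there. Your sketch simply invokes ``the usual Ljusternik--Schnirelmann machinery'' without addressing why the pseudo-gradient flow does not leave $\cN_-$ in finite time, which is precisely what the completeness hypothesis and the $N$-boundedness estimate on the vector field are designed to prevent.
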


Our next result is devoted to the radial case with $G = O(2)$. 

\begin{theorem}\label{thm_multiplicity_radial}
	Let $a \in L^{\infty}(\R^2)$ be radially symmetric. Then the Choquard equation
	\begin{equation*}
		-\Delta u + a(x)u + (\log|\cdot| \ast u^2)u = 0 \quad x \in \R^2,
	\end{equation*}
	admits a sequence of infinitely many radially symmetric solution pairs $(\pm u_n)_n$ in $X$ with \mbox{$\Phi(\pm u_n) \to \infty$}. Moreover, if $\einf_{\R^2} a > 0 $, then 
	$$ \Phi(\pm u_1) = \inf\{ \Phi(u) : u \in X \setminus \{0\}, \, u \text{ is radially symmetric and } \Phi'(u) = 0\}. $$ 
\end{theorem}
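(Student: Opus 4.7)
The plan is to cast Theorem~\ref{thm_multiplicity_radial} as a direct application of the abstract $G$-equivariant Ljusternik-Schnirelmann scheme developed for Theorem~\ref{thm:multiplicity_translation}, now with the \emph{compact} group $G = O(2)$ acting on $X$ by $g\ast u = u\circ g^{-1}$. Because $a$, the kinetic term, and the logarithmic weight $\log(1+|x|)$ are all radial, the functionals $q_a$, $V_0$, $\Phi$, and in fact the whole norm $\|\cdot\|_X$ are $G$-invariant — in sharp contrast to the situation for $G=\Z^2$. Compactness of $O(2)$, together with radial decay, will replace the delicate translation analysis used in Theorem~\ref{thm:multiplicity_translation} by a genuine Cerami condition.

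First I would work on the closed subspace $X_r \subset X$ of radially symmetric functions, which coincides with $\Fix(G)$. Since the $O(2)$-action is an isometry of the Hilbert space $X$, Palais' principle of symmetric criticality applies, and every critical point of $\Phi_r := \Phi|_{X_r}$ produces a critical point of $\Phi$ on $X$. Restricted to $X_r$ the generalized barycenter $\beta(u)$ vanishes identically (being the only $O(2)$-fixed point in $\R^2$), so the family of adapted scalar products $\langle\cdot,\cdot\rangle_u$ used in the abstract deformation machinery collapses to the single inner product of $X$, and the framework simplifies considerably.

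Next I would verify the Cerami condition for $\Phi_r$ at every level $c\in\R$. The crucial ingredient is the Strauss radial lemma, which yields the compact embedding $H^1_r(\R^2)\hookrightarrow L^p(\R^2)$ for every $p>2$. Combined with the boundedness argument already developed for Cerami sequences on $\cN_-$ in the proof of Theorem~\ref{thm:multiplicity_translation} (which exploits the strict sign $V_0<0$ on $\cN_-$ together with the Nehari identity $q_a(u)=-V_0(u)$), this promotes weak compactness modulo translations to strong convergence of a subsequence in $X_r$. Applying the abstract $\Z_2$-equivariant multiplicity theorem to the symmetric manifold $\cN_-\cap X_r$ then produces the min-max critical values
$$
c_k \;=\; \inf_{\substack{A\subset \cN_-\cap X_r,\ A=-A\\ \gamma(A)\geq k}} \sup_{u\in A}\Phi(u), \qquad k\in\N,
$$
with $c_k\to\infty$, yielding the desired sequence $(\pm u_n)$.

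The main obstacle is, as in the non-radial case, the genuine indefiniteness of $q_a$ on $X_r$: since no sign hypothesis is imposed on $a$, the operator $-\Delta+a$ need not be Fredholm and $\cN_-$ is not complete, so the deformation must be carried out inside this nonlinear manifold rather than on a linear complement of the negative part of $S$. The delicate checks are that weak limits of Cerami sequences are nontrivial, so they remain in $\cN_-$ and do not escape into $\cN_0$ — this is handled by the radial compactness of $V_0'$ — and that for every $k\in\N$ there exists a symmetric subset $A_k\subset \cN_-\cap X_r$ with $\gamma(A_k)\geq k$ and finite $\sup_{A_k}\Phi$, which is obtained by a standard linking construction built from finite-dimensional subspaces of $X_r$.
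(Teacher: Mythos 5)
Your proposal is correct in substance and, for this particular theorem, a more elementary route than the paper takes. The paper deduces Theorem~\ref{thm_multiplicity_radial} as the special case $G = G_0 = O(2)$, $\zeta \equiv 1$ of the unified Theorem~\ref{thm:multiplicityN-}, which in turn invokes the abstract result Theorem~\ref{c-k-to-infty-variant} with the full barycenter-adapted metric machinery; the compactness-modulo-translations step (Proposition~\ref{PS-application}) is handled by combining Lions' concentration-compactness with a rotational-invariance argument that forces the translations $b_n$ to stay bounded. You instead propose to work directly in $X_r = \Fix(O(2))$, observe (correctly, and this is a genuine simplification the paper does not spell out) that $\beta \equiv 0$ on radial functions so $\langle \cdot, \cdot \rangle_u = \langle \cdot, \cdot \rangle_X$ identically and $N(u) = \|u\|_X$ up to the distance contribution, and replace Lions by the compact Strauss embedding $H^1_r(\R^2) \hookrightarrow L^p(\R^2)$, $p>2$. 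In effect you short-circuit the admissible-pair bookkeeping and reduce the problem to the classical $\Z_2$-equivariant Ljusternik–Schnirelmann scheme on $\cN_- \cap X_r$.

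Two things are worth flagging, not as gaps but as places your sketch is lighter than the paper. First, Strauss compactness alone only delivers strong $L^p$-convergence; promoting this to strong convergence in the norm $\|\cdot\|_X$ (which carries the $\log(1+|x|)$ weight) still requires the weighted-estimate device of Lemma~\ref{sec:comp-cond-2}, exactly as in the paper, so you are not dispensing with that part of the compactness analysis. Second, your phrase ``radial compactness of $V_0'$'' glosses over the actual mechanism that rules out degeneration of Cerami sequences: one must show $V_0'(u) \neq 0$ for every $u \neq 0$ (Lemma~\ref{lemma:V0'notzero}), and this enters through a rescaling argument on a normalized Cerami sequence, not directly through compactness. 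You should also note that, once boundedness and strong convergence are in hand, the limit automatically lands in $\cN_-$ rather than $\cN_0 \cup \cN_+$ because $\Phi$ vanishes on $\cN_0$ and is negative on $\cN_+$, while the level $c$ under consideration is positive — a short but necessary step. With these points filled in, your route is sound; the paper simply prefers to prove Theorems~\ref{thm:multiplicity_translation}–\ref{thm:multiplicity_reflection} all at once, at the price of carrying the barycenter metric through the radial case where it is actually unnecessary.
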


Furthermore we can produce infinitely many sign-changing nonradial solutions to the equation 
$(\ref{eq:choquard_equation})$. For this we fix $m \in \mathbb{N}$, we let $g \in O(2)$ be a rotation of angle $\pi/m$, and we consider the cyclic group $G := \{ g^{j}\::\: j=0,2m-1\}$ which acts on $X$ as follows
\begin{equation}\label{eq:sign_changing_symmetry}
	g^j \ast u(x) = (-1)^j u(g^{-j} x) \quad \text{a.e. in } \R^2.
\end{equation}
We then have the following:

\begin{theorem}\label{thm_multiplicity_nonradial}
	Let $a \in L^{\infty}(\R^2)$ be $G$-invariant, which holds, in particular, if $a$ is radially symmetric. Then the Choquard equation
	\begin{equation*}
		-\Delta u + a(x)u + (\log|\cdot| \ast u^2)u = 0 \quad x \in \R^2,
	\end{equation*}
	admits a sequence of infinitely many solution pairs $(\pm u_n)_n$  in $X$ with $\Phi(\pm u_n) \to \infty$ which obey \eqref{eq:sign_changing_symmetry}. In particular, these solutions are sign-changing and nonradial. Moreover, if $\einf_{\R^2} a > 0 $, then 
	$$ \Phi(\pm u_1) = \inf\{ \Phi(u) : u \in X \setminus \{0\}, \, u \text{ is } G\text{-invariant, and } \Phi'(u) = 0\}. $$
\end{theorem}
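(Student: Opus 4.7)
The plan is to deduce Theorem \ref{thm_multiplicity_nonradial} from the abstract $G$-equivariant framework by choosing the right $G$-action. Concretely, I define on $X$ the twisted action
$$
(g^j \ast u)(x) := (-1)^j\, u(g^{-j}x), \qquad j = 0,\ldots,2m-1,
$$
so that condition \eqref{eq:sign_changing_symmetry} is precisely the fixed-point condition $u \in X^G:=\{u \in X : g \ast u = u \text{ for all } g \in G\}$. Since $g \in O(2)$ preserves $|x|$ (and thus $\log(1+|x|)$), each $g^j \ast$ is an isometry of $\|\cdot\|_X$; and because $a$ is $G$-invariant in the usual sense and both $q_a$ and $V_0$ are quadratic in $u^2$, the sign twist is invisible and $\Phi(g^j \ast u) = \Phi(u)$. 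Thus the principle of symmetric criticality applies, and critical points of $\Phi|_{X^G}$ yield genuine solutions of \eqref{eq:choquard_equation}.

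The key point is that working inside $X^G$ removes the only genuine source of noncompactness. Translations do not act on $X^G$ (a $G$-(anti)equivariant function has the origin as a distinguished centre), and the $G$-action itself is already an isometry of $\|\cdot\|_X$, so the family of adapted scalar products $\langle\cdot,\cdot\rangle_u$ collapses to the fixed inner product of $X$. I therefore expect that the full abstract machinery of the paper simplifies in this case to the classical situation: a Cerami sequence for $\Phi|_{X^G}$ is automatically bounded in $\|\cdot\|_X$, and by combining the compact embedding of $G$-symmetric subspaces of $H^1(\R^2)$ into $L^p$ with the logarithmic convolution estimates already developed for $V_0$ (as in \cite{CW,BoCiVa}), any weak limit is a critical point and the Cerami condition holds at every level on $\mathcal{N}_- \cap X^G$.

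With compactness secured, the multiplicity follows the same pattern as Theorem \ref{thm:multiplicity_translation}. The manifold $\mathcal{N}_-$ is invariant under both the twisted $G$-action and the $\Z_2$-action $u \mapsto -u$, and $\Phi|_{\mathcal{N}_- \cap X^G}$ is an even $C^{1,1}$ functional which is bounded below by its Nehari structure and diverges along sequences with diverging genus, provided one can exhibit, for each $k \in \N$, a $k$-dimensional subspace $V_k \subset X^G$ whose unit sphere can be radially projected onto $\mathcal{N}_-$. Such $V_k$ are constructed out of cut-off eigenfunctions of $-\Delta + a$ localised on disjoint $G$-symmetric annuli; the $4$-homogeneity of $V_0$ together with its negativity on suitably chosen test functions places the projected spheres in $\mathcal{N}_-$ with $\Phi$-values tending to infinity with $k$. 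A standard Ljusternik-Schnirelmann minimax then produces a sequence of pairs $(\pm u_n)$ in $\mathcal{N}_- \cap X^G$ with $\Phi(\pm u_n) \to \infty$.

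The qualitative conclusions are immediate from \eqref{eq:sign_changing_symmetry}: a radial $u$ satisfies $u(g^{-1}x) = u(x)$, which combined with $u(x) = -u(g^{-1}x)$ forces $u \equiv 0$; and if $u \geq 0$ then $-u\circ g^{-1} \leq 0$ while equalling $u \geq 0$, again forcing $u \equiv 0$. The main technical obstacle I anticipate is the compactness verification: I must check that Cerami sequences in $X^G$ whose weak limits vanish do not occur at positive levels, which requires adapting the nonvanishing/concentration lemmas of \cite{CW} to the present indefinite setting where $q_a$ has no sign. A secondary issue is to confirm that the radial projection onto $\mathcal{N}_-$ is well-defined and continuous on the finite-dimensional test spheres despite the degenerate set $\mathcal{N}_0$; this should follow from a careful choice of $V_k$ so that $V_0 < 0$ on $V_k \setminus \{0\}$.
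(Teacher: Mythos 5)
Your high-level reduction is sound: choosing the twisted action $g^j \ast_\zeta u = (-1)^j u(g^{-j}\cdot)$ makes \eqref{eq:sign_changing_symmetry} a fixed-point condition; $\Phi$ is $\zeta$-invariant because $q_a$ and $V_0$ depend on $u$ only through $u^2$; and, since any $(G_0,\zeta)$-invariant $u\neq 0$ has $\beta(u)$ fixed by a nontrivial planar rotation, indeed $\beta(u)=0$ so $\langle\cdot,\cdot\rangle_u=\langle\cdot,\cdot\rangle_X$ on $X^G\setminus\{0\}$, and the whole adapted-metric machinery collapses to the standard inner product. This matches the paper's strategy, which invokes Theorem~\ref{thm:multiplicityN-} with $G=G_0=\{g^j\}$, $\zeta(g^j)=(-1)^j$, and admissibility class \ref{G1}.

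However, the crucial compactness step is a genuine gap in your proposal. You assert that a Cerami sequence for $\Phi|_{X^G}$ is ``automatically bounded in $\|\cdot\|_X$''. This is not automatic, and it is precisely the most substantial part of the paper's proof of the compactness condition (Proposition~\ref{prop:PSC23}). Even with $\beta\equiv 0$, the Cerami condition alone does not give $L^2$-boundedness: the paper obtains $|u_n|_2\le C$ by a contradiction argument that normalizes $\hat u_n=u_n/|u_n|_2$, shows $(\hat u_n)$ is $H^1$-bounded with $V_0(\hat u_n)\to 0$, forces a nonvanishing weak limit via Proposition~\ref{PS-application}, and then contradicts Lemma~\ref{lemma:V0'notzero} (i.e.\ $V_0'(\hat u)\neq 0$ for $\hat u\neq 0$). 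Your proposal omits this argument entirely. Boundedness of $|u_n|_*$ is also nontrivial: it does not follow from $H^1$-boundedness but rather from the logarithmic concentration estimate of Lemma~\ref{sec:comp-cond-2}, which uses the finiteness of $V_1^\tau(u_n)$.

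A second, concrete error is your appeal to ``the compact embedding of $G$-symmetric subspaces of $H^1(\R^2)$ into $L^p$''. For a \emph{finite} subgroup $G\subset O(2)$ (like the cyclic group of order $2m$), there is no such compact embedding; Strauss-type compactness requires the full rotation group. Mass can still escape to infinity along $|G|$ symmetric bumps. What the paper actually uses instead is: (a) the compact embedding of the whole space $X\hookrightarrow L^s(\R^2)$, $s\ge 2$ (Lemma~\ref{sec:comp-cond}(i)), which depends on the logarithmic weight and hence on $X$-boundedness, not on the $G$-symmetry of $H^1$; and (b) the fact that if $u_n$ has a $G_0$-invariant square and its translates escape along $b_n\to\infty$, then $u_n^2$ also has mass near $A^{-1}b_n$, so $V_1^\tau(u_n)\ge \log(1+r_n)(\int_{B_R(-b_n)}u_n^2)^2\to\infty$, contradicting the Cerami bound (this is the case \ref{G1} estimate in Proposition~\ref{PS-application}). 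Replacing this logarithmic argument with a nonexistent Strauss embedding does not work, and is not a cosmetic issue: without it you cannot rule out vanishing weak limits at positive energy levels.

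Minor points: (1) your ``radial projection onto $\mathcal{N}_-$'' is what the paper calls $\sigma$, and the paper handles the degenerate set $\mathcal{N}_0$ by scaling with $T_t$ to force $q_a>0$ on the finite-dimensional test sphere while $V_0<0$ is guaranteed by compact support in $B_{1/2}(0)$; your annulus-eigenfunction construction would need an analogous check but is not wrong in principle. (2) The lower bound $c_*>0$ for the LS levels (Lemma~\ref{lemma:ckpositive}) uses the auxiliary variational problem on $\Lambda$ and the scaling identity $V_0(T_tu)=V_0(u)+t|u|_2^4$; ``bounded below by its Nehari structure'' is not by itself a proof. Overall, the strategy is the right one, but the compactness verification as written has two genuine holes that need to be filled by the normalization/contradiction argument and the logarithmic concentration estimates rather than by the claimed automatic boundedness and compact symmetric embedding.
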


Finally, we let $G$ be generated by the reflection $h$ at the $x_1$-axis and translations by $ e_1 \Z := \{ (z,0) \in \R^2 : z \in\ Z\}$. Moreover, we let $G_0:= \{\id, h\}$ be the normal subgroup generated by $h$. Then a $G_0$-invariant function $u \in X$ satisfies
\begin{equation}\label{eq:c3symmetries}
u(x_1,-x_2) = u(x_1,x_2) \qquad \text{for all $x \in \R^2$.}
\end{equation}
\begin{theorem}\label{thm:multiplicity_reflection}
  Let $a \in L^\infty(\R^2)$ be $G$-invariant, i.e. we have 
  $$
  a(x_1,-x_2) = a(x_1,x_2) \quad \text{und} \quad a(x_1+k,x_2) = a(x_1,x_2)\qquad \text{for all $x \in \R^2$, $k \in \Z$.}
  $$
  Then the Choquard equation
	\begin{equation*}
		-\Delta u + a(x)u + (\log|\cdot| \ast u^2)u = 0 \quad x \in \R^2,
	\end{equation*}
        admits a sequence of infinitely many solution pairs $(\pm u_n)_n$  in $X$ with $\Phi(\pm u_n) \to \infty$, which obey \eqref{eq:c3symmetries}. Moreover, if $\einf_{\R^2} a > 0 $, then 
        $$ \Phi(\pm u_1) = \inf\{ \Phi(u) : u \in X \setminus \{0\}, \, u \text{ is } G_0\text{-invariant, and } \Phi'(u) = 0\}. $$
\end{theorem}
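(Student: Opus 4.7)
The plan is to apply the abstract $G$-equivariant Ljusternik-Schnirelmann framework developed in the paper, assigning the roles of the ``finite symmetry subgroup'' and the ``noncompact symmetry subgroup'' to $G_0 = \{\id, h\}$ and to the translation subgroup $T := \{\tau_k \::\: k \in \Z\}$ (where $\tau_k \ast u := u(\cdot - k e_1)$) respectively. Since $h$ commutes with every $\tau_k$, one has $G \cong G_0 \times T$ and in particular $G_0$ is normal in $G$. The action of $h$ preserves $|\nabla u|^2$, $u^2$, and $\log(1+|x|)$, hence it is an orthogonal isometry of $X$. By the principle of symmetric criticality applied to the finite, hence compact, group $G_0$, critical points of $\Phi$ restricted to the closed subspace
\[
X^{G_0} := \{u \in X \::\: u \text{ satisfies \eqref{eq:c3symmetries}}\}
\]
are critical points of $\Phi$ on all of $X$, and the same holds for the constraint $\cN_- \cap X^{G_0}$, which inherits from $\cN_-$ its $C^2$-submanifold and $\Z_2$-symmetric structure.

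On $X^{G_0}$ one now faces exactly the structural situation of Theorem \ref{thm:multiplicity_translation}, with $T$ in place of $\Z^2$. First I would transport the $T$-equivariant family of scalar products $\langle v, w\rangle_u := \langle v(\cdot + \beta(u)), w(\cdot + \beta(u))\rangle_X$ built from the generalized barycenter $\beta$ of \cite{bw,cp}. Since every $u \in X^{G_0}$ is even in $x_2$, the equivariance of $\beta$ under the reflection $h$ forces $\beta_2(u) = 0$, so $\beta$ takes values in $\R e_1$, and the equivariance under $T$ has the required form $\beta(\tau_k \ast u) = \beta(u) + k e_1$; this guarantees the $G$-invariance property \eqref{eq:G-invariant-norm-intro} on $X^{G_0}$. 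Next, the min-max geometry on $\cN_- \cap X^{G_0}$ is handled as in Theorem \ref{thm:multiplicity_translation}: I would show that $\Phi$ is positive on this set and that symmetric compact subsets of arbitrarily large Krasnoselski genus on which $\Phi$ stays bounded can be constructed by embedding finite-dimensional spheres of $G_0$-invariant test functions.

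With these ingredients in place, I would verify the $T$-equivariant Cerami condition for $\Phi$ on $\cN_- \cap X^{G_0}$: every such Cerami sequence, after a suitable sequence of integer $x_1$-translations chosen by means of $\beta$, admits a subsequence converging strongly in $X$ to a nontrivial critical point. Combining this with the equivariant deformation lemma driven by the moving scalar products $\langle \cdot, \cdot\rangle_u$ and with the $\Z_2$-symmetry $u \mapsto -u$, the standard LS scheme yields the min-max values
\[
c_k := \inf_{A \in \Sigma_k} \sup_{u \in A} \Phi(u), \qquad k \in \N,
\]
where $\Sigma_k$ is the family of symmetric compact subsets of $\cN_- \cap X^{G_0}$ of Krasnoselski genus at least $k$. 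The general $G$-equivariant LS theorem then gives $c_k \nearrow \infty$, each $c_k$ being a critical value of $\Phi|_{X^{G_0}}$; symmetric criticality promotes the corresponding critical points to critical points of $\Phi$ on $X$, each satisfying \eqref{eq:c3symmetries}.

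The main obstacle is the $T$-equivariant Cerami condition. Unlike in Theorem \ref{thm:multiplicity_translation}, only translations in the $x_1$-direction are available to recover escaping mass, and one has to rule out any vertical concentration or vertical splitting of a Cerami sequence. This is precisely where the $G_0$-symmetry intervenes: any such splitting must be symmetric in $x_2$, so escaping blobs occur in pairs at heights $\pm M_n \to \infty$, and the long-range logarithmic kernel in $V_0$ couples them with a cross term of order $(\text{mass})^2 \log M_n$, which should be incompatible with $\Phi(u_n)$ remaining bounded along a Cerami sequence. Turning this heuristic into a quantitative profile decomposition adapted to the noncomplete norm of $X$ and to the family $\|\cdot\|_u$ is the technical heart of the argument and is where the new compactness condition of the abstract framework is expected to do the decisive work.
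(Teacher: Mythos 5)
Your proposal follows essentially the same route as the paper: Theorem~\ref{thm:multiplicity_reflection} is obtained there by applying Theorem~\ref{thm:multiplicityN-} (hence Theorem~\ref{c-k-to-infty-variant}) with $G$ generated by the reflection $h$ and the horizontal translation $g_{e_1}$, $G_0=\{\id,h\}$ and $\zeta\equiv 1$ --- the admissibility case~\ref{G4} --- and the compactness of $(NCG_0)_c$-sequences modulo $G$ is verified in Proposition~\ref{PS-application} by exactly the mechanism you isolate, namely that the $G_0$-invariance of $u_n^2$ forces any vertically escaping concentration to appear as a mirrored pair, so that the growth of the logarithmic kernel makes $V_1^\tau(u_n)$ blow up, ruling out $|b_n^2|\to\infty$, after which the horizontal component is reduced modulo $\Z e_1$ as in the $\Z^2$-case. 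The only over-elaboration in your sketch is the call for a quantitative profile decomposition adapted to $\|\cdot\|_u$: the paper gets by with the standard Lions vanishing/non-vanishing dichotomy in $H^1$ combined with the $B_1$-boundedness estimate of Lemma~\ref{sec:comp-cond-2}.
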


This paper is organized as follows. In Section 
\ref{sec:LS theory}
we introduce in detail the abstract theory and state our main abstract result
Theorem \ref{c-k-to-infty}.

In Section \ref{subsec:cerami-case}, we prove  Theorem \ref{c-k-to-infty}. We also discuss the variant case dealing with functions which are $G_0$-invariant for a normal subgroup $G_0 \subset G$ and state the analogue version of Theorem \ref{c-k-to-infty} in this new setting (see Theorem \ref{c-k-to-infty-variant}).

Then the Section \ref{sec:choquard} is dedicated to the indefinite nonlinear Choquard equation.  We put into place the pieces necessary to apply Theorems \ref{c-k-to-infty} and \ref{c-k-to-infty-variant}. In particular we construct the generalized barycenter map,  introduce the symmetries we will work with and construct a $G$-invariant metric.

In Section \ref{sec:choquard} we applied the abstract theory developed in Section \ref{sec:LS theory} to produce infinitely many solutions for the indefinite Choquard equation under different symmetries.


\section{A new version of G-equivariant LS theory}\label{sec:LS theory}
The purpose of this section is to establish an abstract multiplicity result in a new setting for $G$-equivariant critical point theory.

\subsection{The main result in an abstract setting}
\label{sec:abstr-sett-main}
We consider an Hilbert space $X$ with scalar product $\langle \cdot, \cdot \rangle_X$,  induced by the norm $\|\cdot\|_X$. 
We make the following general assumptions for a given family of scalar products $\langle \cdot,\cdot \rangle_u$, $u \in X \setminus \{0\}$ and the induced norms $\|\cdot\|_u$.
\begin{enumerate}[label=(M\arabic*),ref=(M\arabic*)]
	\item \label{M1} $\| \cdot \|_u$ is equivalent to $\| \cdot \|_X$ for all $u \in X \setminus \{0\}$, and this equivalence is locally uniform. More precisely, for all $u \in X \setminus \{0\}$, there is a neighborhood $U \subset X \setminus \{0\}$ of $u$ and constant $C=C(u)$ such that  
	\begin{equation}
		\label{local-uniform-equivalence-equation}  
		\frac{1}{C} \| v \|_{X} \leq \|v\|_{\tilde u} \le C \|v\|_X \qquad \text{for all $\tilde u \in U$, $v \in X$.}
	\end{equation}
	\item \label{M4} For all $u \in X \setminus \{0\}$ there is neighborhood $U \subset X \setminus \{0\}$ of $u$ and a constant $C=C(u)>0$ such that for all $u_1, u_2 \in U, \, v,w \in X$
	$$
	| \langle v,w \rangle_{u_1} - \langle v,w \rangle_{u_2} | \leq C \| u_1 - u_2 \|_X \|v\|_X \|w\|_X
	$$
\end{enumerate}
Now, let
$$
M \subset X \setminus \{0\}
$$
be a symmetric submanifold of class $C^{1,1}$, i.e. $M = -M$. In particular, for every $u \in M$, the tangent space $T_uM \subset X$ is a closed subspace of $X$, so it is a Hilbert space endowed with $\langle \cdot, \cdot \rangle_X$ and therefore also with any of the (equivalent) scalar products $\langle \cdot,\cdot \rangle_v$, $v \in X \setminus \{0\}$.

Then we can define a distance $d : M \times M \to \R$ induced by the metric $u \mapsto \langle \cdot, \cdot \rangle_u$ on $T M$, which is defined in the following standard way. For all $L>0$, $\gamma \in C^1([0,L],M)$ we define the length of the curve $\gamma$  as
$$ l(\gamma) := \int_0^{L} \| \gamma'(s) \|_{\gamma(s)} ds. $$
Then we define
\begin{equation}
	\label{eq:definition-metric-d}
	d(u,v) := \inf \{ l(\gamma) : \gamma \in C^1([0,1],M), \, \gamma(0) = u, \, \gamma(1)=v \} \qquad \text{for $u,v \in M$.}
\end{equation}
It is well known and easy to see that $d$ defines a metric on every connected component of $M$ \footnote{By definition, $d(u,v)= \infty$ if $u$ and $v$ are contained in different connected components of $M$}. Moreover, the relative topology inherited by $(X,\|\cdot\|_X)$ and the metric topology generated by $d$ are equivalent, see Lemma \ref{compatability-lipschitz} below.

In the following, we consider an even functional $\Phi \in C^{1,1}(M)$. Observe that \ref{M1} yields for all $u \in X$ the existence of gradients of $\Phi$ at the point $u$ with respect to the any of the (equivalent) scalar products $\langle \cdot, \cdot \rangle_X$ and $\langle \cdot, \cdot \rangle_v$, $v \in X \setminus \{0\}$. More precisely, for $u \in M$ and $v \in X \setminus \{0\}$, we let $\nabla_X \Phi(u) \in T_u M \subset X$ and $\nabla_v \Phi(u) \in T_u M \subset X$ be the unique vectors with the property 
$$
\Phi'(u) w = \langle \nabla_v \Phi(u),w \rangle_v = \langle \nabla_X \Phi(u),w \rangle_X \qquad \text{for all $w \in T_u M$.}
$$
Clearly, the assumption $\Phi \in C^{1,1}(M)$ implies that the map
$$
\nabla_X \Phi : M \to X, \qquad u \mapsto \nabla_X \Phi(u)
$$
is locally Lipschitz continuous, and, although less obvious, the $u$-dependent gradients have this property as well, see Lemma \ref{lipschitz-continuity-gradient} below.

In the following, we consider the critical sets
$$
K := \{ u \in M \::\: \Phi'(u) = 0\} \qquad \text{and}\qquad K_c:= \{ u \in K\::\: \Phi(u)=c\}\quad \text{for $c \in \R$.}
$$
Moreover, we let 
$$
\Phi^c:= \{u \in M\::\: \Phi(u) \le c\}, \qquad c \in \R
$$
denote the usual sublevel sets of $\Phi$. 

Now, we introduce the symmetric setting. Let $G$ be a topological group acting linearly on $X$ on the left. We denote this action by $g \ast u$ for $u \in X, g \in G$. We assume that the manifold $M \subset X$ and the functional $\Phi \in C^{1,1}(M)$ are $G$-invariant. In particular, this yields the implication
$$
v \in T_u M \quad \Longrightarrow \quad g \ast  v \in T_{g \ast  u}M \qquad \text{for $u \in M$, $g \in G$.}
$$
As mentioned in the beginning of this section, we do not assume that $\|\cdot\|_X$ is $G$-invariant, but we need the following key invariance condition on the family of scalar products $\langle \cdot, \cdot, \rangle_u$, $u \in X \setminus \{0\}$. 

\begin{enumerate}[resume,label=(M\arabic*),ref=(M\arabic*)]
	\item \label{M3} For all $u,v,w \in X \setminus \{0\}$ and $g \in G$ we have $\langle g \ast v, g \ast w \rangle_{g \ast u} = \langle v, w \rangle_{u} $
\end{enumerate}

As a consequence, we will obtain the $G$-equivariance of the $u$-dependent gradients in Lemma \ref{lemma:invariant}.

Now we introduce a new variant of the \textit{Cerami condition} that is better suited to our setting. We set
\begin{equation}
	\label{eq:def-N-w}
	N(w):= \inf \{\|u\|_u + d(u,w)\::\: u \in M\} \qquad \text{for $w \in M$.}
\end{equation}

\begin{definition}
	\label{N-bounded}  
	\begin{itemize}
		\item[(i)] A subset $A$ of $M$ is called $N$-bounded if
		\begin{equation}
			\label{eq:def-N-A}
			N(A):= \sup \limits_{w \in A}N(w)<\infty.    
		\end{equation}
		\item[(ii)] For $c \in \R$, we call a sequence $(u_n)_n$ an $N$-Cerami sequence at the level $c$ (in short: $(NC)_c$-sequence)
		if $\Phi(u_n) \to c$ and $\|\nabla_{u_n} \Phi(u_n)\|_{u_n} (1+ N(u_n)) \to 0$ as $n \to \infty$.
	\end{itemize}
\end{definition}

We now introduce the following key assumption depending on $c \in \R$, which we call the {\em $N$-Cerami condition relative to the group $G$ at level $c$ in $M$}.
\begin{enumerate}[label=$(NCG)_c$,ref=$(NCG)_c$]
	\item \label{NC} If $(u_n)_n \subset M$ is an $(NC)_c$-sequence, then, after passing to a subsequence, there exists a sequence $(g_n)_n$ in $G$ and $u \in M$ with $g_n  \ast  u_n \to u$ in $X$ as $n \to \infty$.
\end{enumerate}

Consider the neighborhoods\footnote{Here and in the following we define $\dist_M(v,A):= \inf \{d(v,u)\::\: u \in A\}$}
$$
A_{c,\rho}:= \{v \in M\::\: \dist_M(v,K_c)  < \rho\}
$$
One of our main results is the following deformation type lemma.
\begin{lemma}
	\label{deformation-lemma-cerami}
	Let $c \in \R$ be such that $\Phi^{-1}([c-2\varepsilon_0,c+2\varepsilon_0]) \subset M$ is complete with respect to the metric $d$ for some $\eps_0>0$, and suppose that $\Phi$ satisfies condition \ref{NC}. Then, for every $\rho>0$, there exists $\eps=\eps(c,\rho) \in (0,\eps_0)$ and a continuous function $\eta: [0,1] \times M \to M$ with the following properties:
	\begin{itemize}
		\item[(i)] $\eta(t,u)=u$ if $t=0$ or $u \not \in \Phi^{-1} ([c-2\eps,c+2\eps])$.
		\item[(ii)] $\eta(t,\cdot)$ is an odd homeomorphism of $X$ for all $t \in [0,1]$, i.e. $\eta(t,-u) = - \eta(t,u)$ for all $u \in X, t \in [0,1]$.
		\item[(iii)] $\eta(1,\Phi^{c+\eps} \setminus A_{c,\rho}) \subset \Phi^{c-\eps}$.
		\item[(iv)] $t \mapsto \Phi(\eta(t,u))$ is nonincreasing for all $u \in M$.  
		\item[(v)] $\eta$ is $G$-equivariant, i.e.,
		\begin{equation*}
			\label{eq:G-equivariance-flow-cerami}
			\eta(t,g \ast u)= g \ast \eta(t,u)\qquad  \text{for $u \in M$, $g \in G$, $t \in [0,1]$.}
		\end{equation*}  
	\end{itemize}  
      \end{lemma}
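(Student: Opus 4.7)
The plan is to follow the classical pseudo-gradient deformation argument for Cerami sequences, adapted to the varying metric $\langle\cdot,\cdot\rangle_u$ and rendered $G$-equivariant.

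First, I would establish a quantitative lower bound: for every $\rho>0$ there exist $\eps\in(0,\eps_0)$ and $\delta>0$ with
\[
\|\nabla_u\Phi(u)\|_u\,(1+N(u))\geq\delta \qquad\text{for every } u\in\Phi^{-1}([c-2\eps,c+2\eps])\setminus A_{c,\rho}.
\]
If this failed, one would obtain an $(NC)_c$-sequence $(u_n)_n$ with $\dist_M(u_n,K_c)\geq\rho$; by \ref{NC} one extracts $g_n\in G$ and $u\in M$ with $g_n\ast u_n\to u$ in $X$, and the $G$-equivariance of the $u$-gradient (Lemma \ref{lemma:invariant}) together with continuity of $\Phi$ forces $u\in K_c$. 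Since (M3) implies directly that the length integral defining $d$ is $G$-invariant, one has $d(u_n,g_n^{-1}\ast u)=d(g_n\ast u_n,u)\to 0$ with $g_n^{-1}\ast u\in K_c$, contradicting $\dist_M(u_n,K_c)\geq\rho$.

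Next I would construct, on $\Omega:=\Phi^{-1}((c-2\eps,c+2\eps))\setminus\overline{A_{c,\rho/2}}$, an odd, locally Lipschitz, $G$-equivariant pseudo-gradient $V$ with
\[
\|V(u)\|_u\leq 1+N(u)\qquad\text{and}\qquad \Phi'(u)V(u)\geq\tfrac{1}{2}(1+N(u))\|\nabla_u\Phi(u)\|_u\geq\tfrac{\delta}{2}\text{ on }\Omega\setminus A_{c,\rho}.
\]
The pointwise model is $V(u)=(1+N(u))\nabla_u\Phi(u)/\|\nabla_u\Phi(u)\|_u$; the standard partition-of-unity construction produces a locally Lipschitz version, using (M4) and Lemma \ref{lipschitz-continuity-gradient} for the regularity of the $u$-gradient and a $G$-invariant partition (available because $G$ acts by $d$-isometries by (M3) and because $d$ induces the relative $\|\cdot\|_X$-topology by Lemma \ref{compatability-lipschitz}). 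Oddness is enforced by symmetrisation, using that $\nabla_{-u}\Phi(-u)=-\nabla_u\Phi(u)$ since $\Phi$ is even. With a $G$-invariant, even, locally Lipschitz cut-off $\chi:M\to[0,1]$ supported in $\Omega$ and equal to $1$ on $\Phi^{-1}([c-\eps,c+\eps])\setminus A_{c,\rho}$, I would then define $\eta(t,u):=\sigma(tT,u)$, where $\sigma$ solves $\partial_s\sigma=-\chi(\sigma)V(\sigma)$, $\sigma(0,u)=u$, and $T:=4\eps/\delta$. Properties (i) and (iv) are immediate; (ii) and (v) follow from oddness and $G$-equivariance of the right-hand side combined with uniqueness of the ODE. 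For (iii), the standard dichotomy applies: if the trajectory starting at $u\in\Phi^{c+\eps}\setminus A_{c,\rho}$ remains in $\{\chi=1\}$ throughout $[0,T]$, the descent estimate $\frac{d}{ds}\Phi(\sigma)\leq-\delta/2$ integrates to the required drop by $2\eps$; the complementary case, in which the trajectory first meets the transition annulus $A_{c,\rho}\setminus A_{c,\rho/2}$, is handled by the usual argument exploiting the $d$-thickness of this annulus and the $\Phi$-monotonicity~(iv).

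The main obstacle is the global existence of $\sigma(\cdot,u)$ in $M$ on $[0,T]$: because $\|\cdot\|_u$ is only locally, not uniformly, equivalent to $\|\cdot\|_X$, no Picard-Lindelöf argument in $(X,\|\cdot\|_X)$ gives control up to any fixed time. Instead one estimates the $d$-length of the trajectory by
\[
d(\sigma(0,u),\sigma(s,u))\leq\int_0^s\|\chi(\sigma)V(\sigma)\|_\sigma\,d\tau\leq\int_0^s(1+N(\sigma))\,d\tau,
\]
which combined with $N(\sigma(s,u))\leq N(u)+d(\sigma(0,u),\sigma(s,u))$ and Grönwall yields $1+N(\sigma(s,u))\leq(1+N(u))e^s$. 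The trajectory is therefore Cauchy in $(\Phi^{-1}([c-2\eps,c+2\eps]),d)$ on every finite subinterval, and the completeness hypothesis of the lemma—combined with (iv) and the cut-off $\chi$ keeping $\sigma$ inside the $\Phi$-band—produces the limit in $M$, yielding global existence on $[0,T]$. This is precisely where the weighting by $1+N(u)$ in the Cerami condition and the completeness assumption enter decisively.
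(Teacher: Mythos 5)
Your overall architecture coincides with the paper's: a quantitative lower bound on $\|\nabla_u\Phi(u)\|_u(1+N(u))$ obtained by contradicting \ref{NC}, a cut-off function localizing a descent vector field to the energy band minus a thinner neighborhood of $K_c$, a Gr\"onwall estimate on $N(\sigma(s,u))$ giving the velocity bound needed for global existence via completeness, and the usual dichotomy (stay in the annulus and descend by $2\eps$, or exit and contradict the $d$-thickness of $A_{c,\rho}\setminus A_{c,\rho/2}$) for property (iii). The Gr\"onwall step in particular is exactly where both arguments lean on the weighting by $1+N(u)$, and your discussion of why Picard--Lindel\"of in $(X,\|\cdot\|_X)$ does not suffice is the right diagnosis.

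The genuine discrepancy is the pseudo-gradient construction. You propose building a locally Lipschitz, odd, $G$-equivariant pseudo-gradient via a partition of unity, citing Lemma~\ref{lipschitz-continuity-gradient} for regularity of $\nabla_u\Phi(u)$ --- but that lemma (together with $\Phi\in C^{1,1}(M)$) already makes $u\mapsto\nabla_u\Phi(u)$ locally Lipschitz, and Lemma~\ref{lemma:invariant} makes it $G$-equivariant, so no pseudo-gradient is needed at all. The paper uses the exact gradient directly, setting $W(u)=-\nu(u)\,\nabla_u\Phi(u)/\|\nabla_u\Phi(u)\|_u^2$ with a cut-off $\nu$ built from $d$-distances to the $G$-invariant sets $A$ and $B$; $G$-equivariance of $W$ is then automatic, with no partition of unity to worry about. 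This matters because your assertion that a $G$-invariant locally finite partition of unity is ``available because $G$ acts by $d$-isometries'' is not justified in the abstract framework: $G$ is an arbitrary topological group and, absent a Haar-averaging argument (which needs compactness) or proper discontinuity (which the abstract setting does not assume), there is no off-the-shelf construction of such a partition. Drop the pseudo-gradient and work with $\nabla_u\Phi(u)$ directly; the $C^{1,1}$ hypothesis is there precisely to license that shortcut.

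Two smaller remarks. First, your choice $V(u)=(1+N(u))\nabla_u\Phi(u)/\|\nabla_u\Phi(u)\|_u$ and time horizon $T=4\eps/\delta$ work, but the paper's normalization $\|\nabla_u\Phi(u)\|_u^{-2}\nabla_u\Phi(u)$ gives $\frac{d}{dt}\Phi(\eta(t,u))=-2\eps\,\nu(\eta(t,u))$ on the nose after rescaling time to $[0,1]$, which makes (iii) and (iv) bookkeeping-free; you would also need to verify that $N$ is locally Lipschitz for your $V$ to be so (true, from~\eqref{N-w-basic-ineq-2} and Lemma~\ref{compatability-lipschitz}, but worth saying). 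Second, in your lower-bound step it is enough that the $\eps$ you produce satisfies the more specific inequality the paper uses, $\|\nabla_u\Phi(u)\|_u(1+N(u))\ge\frac{8\eps}{\rho}(1+N(A_{c,\rho}))$, so that the velocity bound feeds cleanly into the $d$-thickness estimate; a bare $\delta>0$ requires you to then tie $\eps$, $\delta$ and $\rho$ together afterwards, which you gesture at but do not carry out.
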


      \begin{remark}
        \label{remark-N-boundedness-Cerami}{\rm 
       (i) The relevance of this deformation lemma becomes clear in applications where the critical set $K_c$ and the neighborhoods $A_{c,\rho}$ are unbounded in the norm $\|\cdot\|_X$. In particular, this will be the case in the context of the translation invariant logarithmic Choquard equation where we have $\|g_n  \ast  u\|_X \to \infty$ for any $u \in X \setminus \{0\}$ with an appropriate sequence $(g_n)_n \subset G$, see Section~\ref{sec:choquard} below. Usually, deformation lemmata depending on Cerami type conditions require the boundedness of the associated critical set. In particular, we believe that a boundedness assumption is missing in the formulation of \cite[Lemma 2.6]{li-wang:11}. The introduction of the functional $N$ gives rise to a $G$-invariant notion of boundedness, and it readily follows from condition~\ref{NC} that $K_c$ and $A_{c,\rho}$ are $N$-bounded sets, see Lemma~\ref{N-boundedness-neighborhoods-critical} below.\\
       (ii) One might wonder why we have not simply considered the term $\|\nabla_{u_n} \Phi(u_n)\|_{u_n} (1+ \|u_n\|_{u_n})$ in place of $\|\nabla_{u_n} \Phi(u_n)\|_{u_n} (1+ N(u_n))$ in our definition of Cerami sequences, as it is also $G$-invariant. Since $N(u_n) \le  \|u_n\|_{u_n}$, this would give rise to a weaker variant of the Cerami condition. In fact we need the functional $N$ for the deformation arguments in the proof of Lemma~\ref{deformation-lemma-cerami}, as it is more closely related to the metric $d$ on $M$. }
      \end{remark}
      
We postpone the proof of Lemma~\ref{deformation-lemma-cerami} to Subsection \ref{subsec:cerami-case} and continue with the presentation of our $G$-invariant theory. Consider the Ljusternik-Schnirelmann values for $\Phi$ given by 
\begin{equation*}
	c_k := \inf\{c > 0 : \gamma(\Phi^c) \geq k\} \quad \in [-\infty,\infty] ,\qquad \text{for $k \in \N$.} 
\end{equation*}
where $\gamma$ denotes the \textit{Krasnoselskii genus}, see Subsection \ref{sec:compl-proof-theor} below. Before stating our main multiplicity result, we need further assumptions on the $G$-action on $X$.

\begin{enumerate}[label*=$(G_*)$]
	\item \label{Gstar} If $g_n \ast u \to v$ in $X$ for a sequence $(g_n)_n$ in $G$ and $u,v \in X$, then there exists $g \in G$ with $g \ast u =v$. 
\end{enumerate}
\begin{enumerate}[label*=$(I)_c$]
	\item \label{Ic}  $ g \ast  u \neq -u$ for all $u \in K_c$ and $g \in G$.
\end{enumerate}

Then, we have the following multiplicity result.

\begin{theorem}
	\label{c-k-to-infty}
	Suppose that $G$ satisfies \ref{Gstar}, and that there are values $c_*, c_\infty \in \R \cup \{+\infty\}$, $c_* < c_\infty$ with the following properties:
	\begin{itemize}
		\item[(i)] The metric space $(\Phi^{-1}([c_*,c]),d)$ is complete for all $c < c_\infty$.
		\item[(ii)] $k_*:= \lim \limits_{\stackrel{c \to c_*}{c>c_*}}\gamma(\Phi^c)< \infty$.
		\item[(iii)] $\lim \limits_{\stackrel{c \to c_\infty}{c<c_\infty}}\gamma(\Phi^c)=\infty$. 
		\item[(iv)] The conditions \ref{NC} and \ref{Ic} hold at every level $c \in [c_*,c_\infty)$.
	\end{itemize}
	Then we have
	$$
	c_* < c_{k_*+1} \le \dots \le c_{k} < c_\infty \quad \text{for $k > k_*$},\qquad \lim_{k \to \infty} c_{k} = c_\infty,
	$$
	and the values $c_k$, $k > k_*$ are critical values of the functional $\Phi$. Hence there exists a sequence of pairs of critical points $\{\pm u_k\}$ of $\Phi$ with $\Phi(u_k) \to c_\infty$ as $k \to \infty$.
\end{theorem}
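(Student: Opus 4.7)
The plan is to follow the classical Ljusternik--Schnirelmann minimax scheme built on the genus $\gamma$, using Lemma~\ref{deformation-lemma-cerami} as the deformation tool, while carefully handling the fact that the critical sets $K_c$ are generally unbounded in the $X$-norm because of the noncompact $G$-action. As a warm-up I would record the easy properties of $(c_k)_k$: the inclusion $\{c : \gamma(\Phi^c) \ge k+1\} \subset \{c : \gamma(\Phi^c) \ge k\}$ gives monotonicity $c_k \le c_{k+1}$, and hypothesis (iii) immediately yields $c_k < c_\infty$ for every $k$. For the strict inequality $c_{k_*+1} > c_*$, I would use that $c \mapsto \gamma(\Phi^c)$ is nondecreasing and integer valued, so its right-limit $k_*$ at $c_*$ is attained on an entire interval $(c_*, c_* + \delta)$; hence $\gamma(\Phi^c) \le k_* < k_*+1$ there, forcing $c_{k_*+1} \ge c_* + \delta > c_*$.

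The core step is to prove that if $c := c_{k+1} = \cdots = c_{k+p}$ with $k \ge k_*$ and $c < c_\infty$, then $K_c$ is nonempty and in fact rich enough, in a genus sense, to accommodate $p$ distinct critical orbits. I would proceed by contradiction. Using \ref{NC} together with \ref{Gstar}, I would first extract a compact set $\widetilde K_c \subset K_c$ that meets every $G$-orbit of $K_c$; condition \ref{Ic} then guarantees that $\widetilde K_c$ is disjoint from $-\widetilde K_c$, so that standard continuity of the Krasnoselskii genus yields a symmetric open neighborhood of $\widetilde K_c \cup (-\widetilde K_c)$ with controlled genus. Passing to its $G$-saturation and exploiting the $G$-invariance of $d$ encoded in \ref{M3}, this neighborhood can be arranged to lie inside $A_{c,\rho}$ for arbitrarily small $\rho$, so under the contradiction hypothesis $\gamma(A_{c,\rho}) \le p-1$. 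Lemma~\ref{deformation-lemma-cerami} then produces $\eps > 0$ and an odd, $G$-equivariant homeomorphism $\eta(1,\cdot)$ with $\eta(1, \Phi^{c+\eps} \setminus A_{c,\rho}) \subset \Phi^{c-\eps}$. Since $c_{k+p} = c$ gives $\gamma(\Phi^{c+\eps}) \ge k+p$, subadditivity of the genus yields $\gamma(\Phi^{c+\eps} \setminus A_{c,\rho}) \ge k+1$, and this bound is preserved under the odd map $\eta(1,\cdot)$, giving $\gamma(\Phi^{c-\eps}) \ge k+1$. This contradicts $c_{k+1} = c$.

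To conclude $c_k \to c_\infty$, I would suppose $c_k \uparrow c' < c_\infty$. Then for every $p \in \N$ there is an integer $k \ge k_*$ with $c_{k+1} = \cdots = c_{k+p} = c'$, so the argument of the previous paragraph forces the genus of an arbitrarily small $G$-invariant neighborhood of $K_{c'}$ to be at least $p$. For $p$ large enough this contradicts the compactness of $K_{c'}$ modulo $G$ supplied by \ref{NC} and \ref{Gstar}. Combining the three steps gives $c_* < c_{k_*+1} \le c_k < c_\infty$ with $c_k \to c_\infty$ and each $c_k$ for $k > k_*$ a critical value of $\Phi$, producing the announced sequence of critical pairs $\{\pm u_k\}$.

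The main obstacle is the genus-continuity step, i.e.\ constructing a $G$-invariant neighborhood of $K_c$ contained in $A_{c,\rho}$ whose genus does not exceed $\gamma(\widetilde K_c \cup (-\widetilde K_c))$. The classical argument relies on compactness of $K_c$, which fails here in the $X$-topology; my replacement is to exploit \ref{NC}, \ref{Gstar} and \ref{Ic} jointly to reduce the problem to a compact set of orbit representatives and then transport the bound back through the $G$-action. The $G$-equivariance of $\eta$ in Lemma~\ref{deformation-lemma-cerami}(v) is essential here, because only then does the deformation respect the $G$-invariant neighborhood $A_{c,\rho}$ and preserve the minimax scheme.
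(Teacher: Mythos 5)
Your outline diverges from the paper's proof in two places where the divergence opens genuine gaps.

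First, in your core step you propose to ``extract a compact set $\widetilde K_c \subset K_c$ that meets every $G$-orbit of $K_c$.'' Condition \ref{NC} together with \ref{Gstar} gives compactness of $K_c$ \emph{modulo} $G$, i.e.\ sequential compactness of the image of $K_c$ in the quotient $\tilde M = M/G$; it does \emph{not} supply a compact section of the $G$-action inside $M$, and such a section need not exist when $G$ is noncompact (which is precisely the situation of interest). The paper sidesteps this in Lemma \ref{krasnoselski-finite} by working directly in the metric quotient $\tilde M$: one shows $\tilde K_c := P_c(K_c)$ is compact in $\tilde M$, uses \ref{Ic} to see that the induced involution $-\id$ is fixed-point free on $\tilde K_c$, produces an odd map $\tilde K_c \to S^{k-1}$, extends it by Tietze, and finally pulls back through the $G$-invariant projection to obtain a genus bound on $A_{c,\rho}$ for small $\rho$. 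Your extraction step would need an explicit construction or a reference; as written it is not justified. Moreover, you aim for the full multiplicity bound $\gamma(K_c) \ge p$ when $p$ levels coincide, which is not needed for the statement; the paper's Proposition \ref{c-k-critical-value} only proves $K_{c_k} \neq \varnothing$, using the deformation lemma with $A_{c,\rho} = \varnothing$ and no genus estimate at all.

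Second, your concluding step contains an incorrect reduction. You assume that if $c_k \uparrow c' < c_\infty$, then for every $p$ some block $c_{k+1} = \cdots = c_{k+p} = c'$ occurs. This only holds if the sequence eventually stabilizes at $c'$; a priori we may have $c_k < c'$ strictly for all $k$, in which case no such block exists and your multiplicity argument never starts. The paper's argument avoids this entirely: since $c_k \le c'$ for all $k$, every $c > c'$ satisfies $\gamma(\Phi^{c}) \ge k$ for all $k$, hence $\gamma(\Phi^{c'+\eps}) = \infty$; on the other hand Lemma \ref{krasnoselski-finite} gives $\gamma(A_{c',\rho}) < \infty$, and the deformation lemma combined with subadditivity and monotonicity of the genus yields
\[
\gamma(\Phi^{c'+\eps}) \le \gamma(\Phi^{c'+\eps} \setminus A_{c',\rho}) + \gamma(A_{c',\rho}) \le \gamma(\Phi^{c'-\eps}) + \gamma(A_{c',\rho}) < \infty,
\]
which is the desired contradiction. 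Your other preliminary observations (monotonicity of $c_k$, the strict inequality $c_{k_*+1} > c_*$ from the fact that the integer-valued nondecreasing map $c \mapsto \gamma(\Phi^c)$ attains its right-limit $k_*$ on an interval to the right of $c_*$, and $c_k < c_\infty$ from hypothesis (iii)) are correct and match the paper.
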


In some situations arising in applications, the functional $\Phi$ does not satisfy the {\em Cerami condition relative to the group $G$}, but only a weaker variant related to a closed normal subgroup $G_0 \subset G$. In Subsection \ref{sec:variant-case} we will present results for this variant case.

\subsection{Some Lipschitz properties}
\label{sec:some-lipsch-prop}
We continue using the notation from the previous section. We need some local Lipschitz continuity results for functions defined on the submanifold $M \subset X$. For this it is convenient to check first that the metric $d$ on $M$ is locally equivalent to the metric induced by $\|\cdot\|_X$. More precisely, we have the following compatibility property.

\begin{lemma}
  \label{compatability-lipschitz}
  For every $u \in M$ the following holds.
  \begin{enumerate}
  \item[(i)] For every sequence $(u_n)_n \subset M$ we have 
  $\|u_n-u\|_X \to 0$ as $n \to \infty$ if and only if $d(u_n,u) \to 0$ as $n \to \infty$.  
  \item[(ii)] There exists a relative neighborhood $U \subset M$ of $u$ and $C>0$ with
  $$
  \frac{1}{C} \|v-w\|_X \le d(v,w) \le C \|v-w\|_X \qquad \text{for all $v,w \in U$.}
  $$
  \end{enumerate}
\end{lemma}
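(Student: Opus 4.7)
The plan is to derive (ii) first and then deduce (i) from it. Both bounds in (ii) rest on combining the $C^{1,1}$ submanifold structure of $M$ with assumption \ref{M1}. Fix $u \in M$ and choose a $C^{1,1}$ local chart $\varphi : B_{T_uM}(0,r_0) \to M$ with $\varphi(0) = u$, bi-Lipschitz onto its image. By \ref{M1}, after shrinking there is an $X$-open neighborhood $V$ of $u$ and a constant $C_1 \ge 1$ with $\frac{1}{C_1}\|\cdot\|_X \le \|\cdot\|_{\tilde u} \le C_1\|\cdot\|_X$ for every $\tilde u \in V \cap M$; I will also arrange that $V \cap M \subset \varphi(B_{T_uM}(0,r_0))$.

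For the upper bound in (ii), I would pick a smaller neighborhood $U$ so that, for any $v = \varphi(\xi)$, $w = \varphi(\eta) \in U$, the segment $t \mapsto (1-t)\xi + t\eta$ lies inside $B_{T_uM}(0,r_0)$, and then set $\gamma(t) := \varphi((1-t)\xi + t\eta)$. Since $\varphi$ is $C^1$ with locally bounded differential and $\varphi^{-1}$ is Lipschitz, $\|\gamma'(t)\|_X$ is controlled by a constant times $\|v-w\|_X$; combined with \ref{M1} this gives $l(\gamma) \le C\|v-w\|_X$ and hence $d(v,w) \le C \|v-w\|_X$.

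For the lower bound, pick $\delta > 0$ small enough that $B_X(u,\delta) \cap M \subset V$ and set $U' := B_X(u,\delta/3) \cap M$. For $v,w \in U'$ and any $C^1$-path $\gamma:[0,L] \to M$ from $v$ to $w$, the key is a dichotomy: either $\gamma$ remains in $B_X(u,\delta) \cap M$, in which case
\[
\|v-w\|_X = \Bigl\|\int_0^L \gamma'(s)\,ds\Bigr\|_X \le \int_0^L \|\gamma'(s)\|_X\,ds \le C_1 \int_0^L \|\gamma'(s)\|_{\gamma(s)}\,ds = C_1\, l(\gamma);
\]
or $\gamma$ exits this ball at some first time $s_*$, in which case the same estimate applied on $[0,s_*]$ gives $l(\gamma) \ge \|\gamma(s_*)-v\|_X / C_1 \ge 2\delta/(3C_1)$, while trivially $\|v-w\|_X \le 2\delta/3$. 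Either way $\|v-w\|_X \le C_1\, l(\gamma)$; taking the infimum over $\gamma$ yields the lower bound. Intersecting $U$ and $U'$ produces the neighborhood claimed in (ii).

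For (i), one direction is immediate from the upper bound in (ii): if $\|u_n-u\|_X \to 0$, then eventually $u_n$ lies in the neighborhood where (ii) applies, so $d(u_n,u) \to 0$. The main obstacle is the converse, because (ii) is only a local statement and one must ensure that $d$-smallness forces eventual entry into that chart neighborhood. I would exploit the dichotomy from the lower-bound argument: if $d(u_n,u) < \delta/(2C_1)$, then one may choose a $C^1$-path $\gamma$ from $u$ to $u_n$ with $l(\gamma) < \delta/C_1$, which by the dichotomy cannot exit $B_X(u,\delta) \cap M$. Hence $u_n \in B_X(u,\delta)\cap M \subset V$, and a final application of the inside-the-ball estimate on this path yields $\|u_n-u\|_X \le C_1\, l(\gamma) \le 2C_1\, d(u_n,u) \to 0$, completing (i).
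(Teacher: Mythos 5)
Your proof is correct and follows essentially the same route as the paper: a local $C^{1,1}$ chart near $u$, straight chart segments for the upper bound, and an exit-time dichotomy for both the lower bound and the converse direction of (i). The one modest simplification is that your lower bound argues directly in $X$ via $\|\gamma(L)-\gamma(0)\|_X \le \int_0^L \|\gamma'(s)\|_X\,ds \le C_1\,l(\gamma)$, rather than pulling the path back through the chart as the paper does, which avoids the intermediate estimate involving $\|\psi^{-1}(v)-\psi^{-1}(w)\|_X$; likewise your argument for (i) is phrased constructively rather than by contradiction, but rests on the same idea.
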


Note that, as a consequence of (i), the relative neighborhood in (ii) can be taken both with respect to the $X$-topology or with respect to the one induced by $d$. Therefore, local Lipschitz continuity properties of maps defined on $M$ with respect $\|\cdot\|_X$ and $d$ are equivalent.

\begin{proof}
We first show that there exists a relative neighborhood $U \subset M$ of $u$ with respect to the $X$-topology with the property that (ii) holds.  Since $M$ is a $C^1$-submanifold of $X$, there exist a closed subspace $X_* \subset X$, a relatively open neighborhood
  $U_0 \subset M$, an open ball $N_0= B_r(0) \subset X_*$ and a $C^1$-diffeomorphism $\psi: N_0 \to U_0$ (with respect to $\|\cdot\|_X$) with $\psi(0)=u$. Moreover, we may assume that the local equivalence property (\ref{local-uniform-equivalence-equation}) holds in $U_0$. 
  Now for every $v,w \in U_0$ we can choose the curve
  $$
  \gamma \in C^1([0,1],M) \quad \gamma(t):= \psi\bigl((1-t)\psi^{-1}(v) + t\psi^{-1}(w)\bigr)
  $$
  joining $v$ to $w$ to estimate that
  \begin{align*}
    d(v,w)  &\le  \int_0^1 \| \gamma'(s) \|_{\gamma(s)}ds \le C  \int_0^1 \| \gamma'(s) \|_{X} ds\\
    &= C \int_0^1 \| d\psi \bigl((1-t)\psi^{-1}(v) + t\psi^{-1}(w)\bigr)\psi^{-1}(w)-\psi^{-1}(v)\|_X ds\\
 &\le C \Bigl(\sup_{N_0} \|d\psi\|\Bigr)\|\psi^{-1}(v)-\psi^{-1}(w)\|_X \le \tilde C \|v-w\|_X.    
  \end{align*}
  Next, we put $N_1 := B_{\frac{r}{2}}(0)$ and $U:= \psi(N_1)$. By the Lipschitz continuity of $\psi$, we know that
  \begin{equation}
    \label{eq:U_0-upper-est}
  \|v-w\|_X \le L\|\psi^{-1}(v)-\psi^{-1}(w)\|_X \le Lr \qquad \text{for } v, w \in U_0,
  \end{equation}
   with a constant $L>0$ depending only on $U_0$. Moreover, for $v,w \in U$, we consider an arbitrary curve $\gamma \in C^1([0,1],M)$ joining the points $v$ and $w$. We then distinguish two cases. If $\gamma([0,1]) \subset U_0$, then $\eta: [0,1] \to N_0$, $\eta = \psi^{-1} \circ \gamma$ is a well-defined curve which joins the points $\psi^{-1}(v)$ and $\psi^{-1}(w)$, and we have $\eta'(t)= d\psi^{-1}(\gamma(t))\gamma'(t)$ and therefore 
$$
\int_0^1 \| \gamma'(s) \|_{\gamma(s)}ds \ge \frac{1}{C}  \int_0^1 \| \gamma'(s) \|_{X} ds \ge \tilde C \int_0^1 \|\eta'(s)\|_X ds \ge \tilde C \|\psi^{-1}(v)-\psi^{-1}(w)\|_X.
$$
If $\gamma([0,1]) \not \subset U_0$, we let $s_0:= \inf\{s \in [0,1]\::\: \gamma(s) \not \in U_0\}$. Then $\eta: [0,s_0) \to N_0$, $\eta = \psi^{-1} \circ \gamma$ is still well-defined, and
\begin{align*}
	\int_0^1 \| \gamma'(s) \|_{\gamma(s)}ds &\ge \int_0^{s_0} \| \gamma'(s) \|_{\gamma(s)}ds \ge \tilde C \int_0^{s_0} \|\eta'(s)\|_X ds\\
	 &\ge \tilde C \lim_{s \to s_0} \|\eta(s)- \eta(0)\|_X = \tilde C \lim_{s \to s_0} \|\eta(s)- \psi^{-1}(v)\|_X \ge \tilde C \frac{r}{2}. 
\end{align*}
In both case we may combine the obtained estimate with (\ref{eq:U_0-upper-est}) to get that
$$
\int_0^1 \| \gamma'(s) \|_{\gamma(s)}ds \ge C \|v-w\|_X .
$$
Taking the infimum over such paths, we get the desired lower bound
\begin{equation}\label{eq:lower_bound_d}
	d(v,w) \ge C \|v-w\|_X \qquad \text{for all $v,w \in U$.}
\end{equation}
We have thus shown that there exists a relative neighborhood $U \subset M$ of $u$ with respect to the $X$-topology with the property that (ii) holds. From this, it immediately follows that for every sequence $(u_n)_n \subset M$ the convergence 
$\|u_n-u\|_X \to 0$ as $n \to \infty$ implies the convergence $d(u_n,u) \to 0$ as $n \to \infty$. To complete the proof of the lemma, we now show the opposite implication. So assume that $d(u_n,u) \to 0$ as $n \to \infty$, and choose a relative neighborhood   $U \subset M$ of $u$ with respect to the $X$-topology with the property that (ii) holds. If $u_n \in U$ for all but finitely many $u_n$, then we conclude by (ii) that $\|u_n-u\|_X \to 0$, as claimed. So let us assume by contradiction that, after passing to a subsequence, we have $u_n \in M \setminus U$ for all $n \in \N$. Moreover, we may choose curves $\gamma_n \in C^1([0,1],M)$ with $\gamma_n(0) = u$, $\gamma_n(1)=u_n$ and the property that
\begin{equation}
  \label{eq:additional-est-1-equiv-dist}
l(\gamma_n) \le d(u_n,u) + \frac{1}{n} \to 0 \qquad \text{as $n \to \infty$.}
\end{equation}
We then choose $r>0$ with $U_*:=\overline{B_r(u)} \cap M \subset U$ and let $s_n:= \inf\{s \in [0,1]\::\: \gamma_n(s) \not \in U_*\} \in [0,1]$. Then we have curves $\eta_n: [0,s_n] \to U_*$, $\eta_n(t):= \gamma_n(t)$, and 
\begin{equation}
  \label{eq:additional-est-2-equiv-dist}
l(\gamma_n) \ge l(\eta_n) \ge d(u,\gamma_n(s_n)) \ge \frac{1}{C}\|u-\gamma_n(s_n)\|_X = \frac{r}{C}\qquad \text{for all $n \in \N$.}
\end{equation}
Here we used the fact that $\gamma_n(s_n) \in \partial B_r(u) \subset U$ by construction. The contradiction given by (\ref{eq:additional-est-1-equiv-dist}) and (\ref{eq:additional-est-2-equiv-dist}) finishes the proof.
\end{proof}

In the following, we let $\Pi_u \in \cL(X)$ denotes the $X$-orthogonal projection on $T_u M$ for $u \in M$. Since $M$ is of class $C^{1,1}$, the map
$$
\Pi: M \to \cL(X),\qquad u \mapsto \Pi_u,
$$
is locally Lipschitz continuous \footnote{Here the space $\cL(X)$ is endowed with the operator norm w.r.t. to $\|\cdot\|_X$, i.e., $\|T\|_{\cL(X)} := \sup \{ \|Tv \|\::\: v \in X,\,\|v\|_X \le 1\}$}. In the following we also need the local Lipschitz continuity of the map
$$
M \to \cL(X), \qquad u \mapsto P_u,
$$
where $P_u$ denotes the projection on $T_uM$ which is orthogonal with respect to the scalar product $\langle \cdot, \cdot \rangle_u$.
\begin{lemma}
  \label{lipschitz-continuity-u-projection}
For every $u \in M$ there exist a neighborhood $U \subset M$ of $u$ and $C>0$ with
  \begin{equation}
    \label{eq:P-projection-estimate}
  \|P_{u_1} - P_{u_2}\|_{\cL(X)} \le C \|u_1-u_2\|_X \qquad \text{for $u_1,u_2 \in U$.}
  \end{equation}
  
\end{lemma}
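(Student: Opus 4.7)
The plan is to fix $u \in M$, choose a relative neighborhood $U \subset M$ of $u$ on which (M1), (M4), and the local Lipschitz continuity of $v \mapsto \Pi_v$ all hold with uniform constants, and estimate $\|P_{u_1}v - P_{u_2}v\|_X$ for $u_1, u_2 \in U$ and arbitrary $v \in X$. The idea is to compare $z_1 := P_{u_1}v \in T_{u_1}M$ with an auxiliary element $\tilde z_2 := \Pi_{u_1} z_2 \in T_{u_1}M$, where $z_2 := P_{u_2}v \in T_{u_2}M$, and then combine an easy estimate of $\|\tilde z_2 - z_2\|_X$ with a Lax--Milgram type argument for $\|z_1 - \tilde z_2\|_X$. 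Before starting, note that since $P_{u_i}$ is $\langle \cdot,\cdot \rangle_{u_i}$-orthogonal, $\|P_{u_i}v\|_{u_i} \le \|v\|_{u_i}$, and hence by (M1) there is a uniform bound $\|z_i\|_X \le C\|v\|_X$ on $U$.

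For the first piece, since $z_2 = \Pi_{u_2}z_2$, I get
$$\|\tilde z_2 - z_2\|_X = \|(\Pi_{u_1} - \Pi_{u_2}) z_2\|_X \le C\|u_1-u_2\|_X\|z_2\|_X \le C'\|u_1-u_2\|_X\|v\|_X,$$
by the local Lipschitz continuity of $\Pi$. For the second piece, since $z_1, \tilde z_2 \in T_{u_1}M$, I test against arbitrary $w \in T_{u_1}M$. Using the defining property $\langle z_1, w\rangle_{u_1} = \langle v, w\rangle_{u_1}$,
$$\langle z_1 - \tilde z_2, w\rangle_{u_1} = \langle v - z_2, w\rangle_{u_1} + \langle z_2 - \tilde z_2, w\rangle_{u_1}.$$
The second term is controlled by $C\|u_1-u_2\|_X\|v\|_X\|w\|_X$ via the estimate of $\|\tilde z_2 - z_2\|_X$ above together with (M1). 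For the first term, I switch to the $u_2$-scalar product: by (M4),
$$|\langle v - z_2, w\rangle_{u_1} - \langle v - z_2, w\rangle_{u_2}| \le C\|u_1-u_2\|_X\|v-z_2\|_X\|w\|_X,$$
and by the uniform bound on $\|z_2\|_X$ the right-hand side is $\le C'\|u_1-u_2\|_X\|v\|_X\|w\|_X$. Finally, decomposing $w = \Pi_{u_2}w + (w - \Pi_{u_2}w)$ with $\Pi_{u_2}w \in T_{u_2}M$ and using $\langle v - z_2, \Pi_{u_2}w\rangle_{u_2} = 0$ (the defining orthogonality of $z_2$),
$$\langle v - z_2, w\rangle_{u_2} = \langle v - z_2, (\Pi_{u_1} - \Pi_{u_2})w\rangle_{u_2},$$
which is again bounded by $C\|u_1-u_2\|_X\|v\|_X\|w\|_X$ thanks to (M1) and the Lipschitz continuity of $\Pi$ (noting that $w = \Pi_{u_1}w$).

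Collecting the estimates yields
$$|\langle z_1 - \tilde z_2, w\rangle_{u_1}| \le C\|u_1-u_2\|_X\|v\|_X\|w\|_X \qquad \text{for all } w \in T_{u_1}M.$$
Choosing $w = z_1 - \tilde z_2 \in T_{u_1}M$ and invoking (M1) twice to pass between $\|\cdot\|_{u_1}$ and $\|\cdot\|_X$, I obtain $\|z_1 - \tilde z_2\|_X \le C\|u_1-u_2\|_X\|v\|_X$. Combining with the bound on $\|\tilde z_2 - z_2\|_X$ and taking the supremum over $\|v\|_X \le 1$ gives the asserted estimate~(\ref{eq:P-projection-estimate}). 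The main bookkeeping hurdle is to keep straight which tangent space and which scalar product each term is computed in; the key observation that unlocks the argument is that $\tilde z_2$ lies in $T_{u_1}M$, so that the $u_1$-orthogonality defining $z_1$ can be exploited, while the defect $\tilde z_2 - z_2$ is controlled by the Lipschitz continuity of the $X$-orthogonal projection $\Pi$.
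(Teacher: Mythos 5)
Your proof is correct. Let me compare it with the paper's.

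The paper proceeds at the operator level, starting from the algebraic identity $P_2 - P_1 = P_2(\id - P_1) - (\id - P_2)P_1$, estimating the piece $(\id - P_2)P_1$ by the distance-minimizing characterization of orthogonal projections (which reduces it to the Lipschitz modulus of the $X$-orthogonal projection $\Pi$), and then estimating $P_2(\id - P_1)$ via \ref{M4} together with an application of the first estimate with the roles of $u_1$ and $u_2$ swapped. You instead work at the vector level, splitting $P_{u_1}v - P_{u_2}v$ through the intermediate element $\tilde z_2 = \Pi_{u_1} P_{u_2}v \in T_{u_1}M$: the defect $\tilde z_2 - z_2 = (\Pi_{u_1}-\Pi_{u_2})z_2$ is handled directly by the Lipschitz continuity of $\Pi$, and the tangential discrepancy $z_1 - \tilde z_2$ by a Lax--Milgram style argument in $T_{u_1}M$ that combines \ref{M4}, the Lipschitz bound on $\Pi$, and testing with $w = z_1 - \tilde z_2$. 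The ingredients are the same (Lipschitz continuity of $\Pi$, \ref{M4}, \ref{M1}), but your organization avoids the paper's nested cross-application of the first operator estimate, and replacing the operator identity with the choice of an intermediate vector in $T_{u_1}M$ makes the role of the two orthogonalities more transparent. Both approaches are of comparable length and give the same constant structure; yours is perhaps a little easier to read since all the action happens in a single fixed tangent space.
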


\begin{proof}
  In the following, the letter $C>0$ stands for a constant which may change its value in every step of the estimates.
  We fix a relative neighborhood $U \subset M$ of $u$ such that 
\begin{equation}
    \label{eq:Pi-projection-estimate}
  \|\Pi_{u_1} - \Pi_{u_2}\|_{\cL(X)} \le C \|u_1-u_2\|_X \qquad \text{for $u_1,u_2 \in U$}
  \end{equation}
  and such that the uniform equivalence in (\ref{local-uniform-equivalence-equation}) holds in $U$. We then let $u_1,u_2 \in U$. For simplicity, we write $P_i$ resp. $\Pi_i$ in place of $P_{u_i}$ and $\Pi_{u_i}$, $i=1,2$, in the following. We first note that
  \begin{equation}
    \label{eq:difference-decomposition}
  P_2 - P_1 = P_2 (\id - P_1) - (\id-P_2)P_1. 
  \end{equation}
  Moreover, for $w \in X$ we have 
  \begin{equation}
    \label{eq:C-1-2-first-est}
  \|(\id - P_2)P_1 w\|_{u_2} \le C_{1,2}  \|P_1 w\|_{u_1} \qquad \text{with}\quad  C_{1,2} := \sup_{v \in T_{u_1}M \setminus \{0\}} \frac{\|v-P_2 v\|_{u_2}}{\|v\|_{u_1}}.
  \end{equation}
  Since for $v \in T_{u_1}M \setminus \{0\}$ we have 
  $$
  \|v-P_2 v\|_{u_2} = \min\{ \|v-z\|_{u_2}\::\: z \in T_{u_2}M\} \le C \min\{ \|v-z\|_{X}\::\: z \in T_{u_2}M\} = C \|v-\Pi_2 v\|_X,
  $$
  it follows that 
\begin{align*}
  C_{1,2} &\le C \sup_{v \in T_{u_1}M \setminus \{0\}} \frac{\|v-\Pi_2 v\|_{X}}{\|v\|_{u_1}}\le
            C \sup_{v \in T_{u_1}M \setminus \{0\}} \frac{\|v-\Pi_2 v\|_{X}}{\|v\|_{X}}\\
  &\le C \sup_{v \in X \setminus \{0\}} \frac{\|\Pi_1 v-\Pi_2 v\|_{X}}{\|v\|_X}  = C \|\Pi_1-\Pi_2\|_{\cL(X)} \le C \|u_1-u_2\|_X,    
\end{align*}
where we used (\ref{eq:Pi-projection-estimate}) in the last step. Combining this with (\ref{eq:C-1-2-first-est}) gives 
\begin{equation}
  \label{eq:first-projection-est}
\|(\id - P_2)P_1 w\|_{u_2} \le C  \|u_1-u_2\|_X \|P_1 w\|_{u_1} \quad \text{for $w \in X$.}
\end{equation}
Moreover, for $w \in X$ we have, by \ref{M4}, 
\begin{align}
  &\|P_2 (\id - P_1) w\|_{u_2}^2 = \langle P_2(\id - P_1) w, (\id-P_1)w \rangle_{u_2}\nonumber\\
  &\le C \|u_1-u_2\|_X \|P_2(\id-P_1)w\|_X \|(\id-P_1)w\|_X + \langle P_2(\id - P_1) w, (\id-P_1)w \rangle_{u_1}\nonumber\\
  &\le C \|u_1-u_2\|_X \|P_2(\id-P_1)w\|_{u_2} \|(\id-P_1)w\|_{u_1}  + \langle P_2(\id - P_1) w, (\id-P_1)w \rangle_{u_1},  \label{eq:first-projection-est-1}
\end{align}
 where
 \begin{align}
 \langle P_2(\id - P_1) w, (\id-P_1)w \rangle_{u_1} &= \langle (\id-P_1) P_2(\id - P_1) w, (\id-P_1)w \rangle_{u_1} \nonumber\\
& \le \|(\id-P_1) P_2(\id - P_1) w\|_{u_1} \|(\id - P_1) w\|_{u_1} \label{eq:first-projection-est-2} 
 \end{align}
 and, by applying (\ref{eq:first-projection-est}) with $(\id - P_1) w$ in place of $w$ and the roles of $u_1$ and $u_2$ interchanged, 
 \begin{equation}
\label{eq:first-projection-est-3}   
 \|(\id-P_1) P_2(\id - P_1) w\|_{u_1} \le C \|u_1-u_2\|_{\cL(X)}\|P_2 (\id - P_1) w\|_{u_2}
 \end{equation}
Combining (\ref{eq:first-projection-est-1}), (\ref{eq:first-projection-est-2}) and (\ref{eq:first-projection-est-3}) gives
\begin{equation}
  \label{eq:second-projection-est}
 \|P_2 (\id - P_1) w\|_{u_2} \le C \|u_1-u_2\|_{X} \|(\id - P_1) w\|_{u_1}  
\end{equation}
 Combining this with (\ref{eq:difference-decomposition}) and (\ref{eq:first-projection-est}), it follows that 
 $$
\|(P_2-P_1)w\|_{u_2} \le C \|u_1-u_2\|_{X} \Bigl(\|P_1 w\|_{u_1}  +\|(\id - P_1) w\|_{u_1}\Bigr) \le C \|u_1 - u_2\|_{X}\|w\|_{u_1} 
$$
for $w \in X$. Invoking the uniform equivalence in (\ref{local-uniform-equivalence-equation}) for $u \in U$ once more, we conclude that 
$$
\|P_2-P_1 \|_{\cL(X)}\le C \|u_1-u_2\|_X \qquad \text{for $u_1,u_2 \in U$,}
$$
as claimed.
\end{proof}

From the previous result, we may deduce the following local Lipschitz continuity property for the $u$-dependent gradients.

\begin{lemma}
\label{lipschitz-continuity-gradient}
Let $\Phi \in C^{1,1}(M)$. Then the map $X \setminus \{0\} \to X$, $u \mapsto \nabla_u \Phi(u)$ is locally Lipschitz continuous.  
\end{lemma}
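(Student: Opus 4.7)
The plan is to exhibit $\nabla_u\Phi(u)$ as the value at $u$ of a composition of three locally Lipschitz ingredients: the $X$-gradient of $\Phi$, the inverse of an operator encoding the change of inner product, and the $u$-orthogonal projection $P_u$ onto $T_uM$.

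First I would introduce, for each $u\in X\setminus\{0\}$, the bounded self-adjoint operator $J_u\in\cL(X)$ uniquely determined by $\langle J_u v,w\rangle_X=\langle v,w\rangle_u$ for all $v,w\in X$; its existence is guaranteed by \ref{M1} and the Riesz representation theorem. Assumption \ref{M4} says exactly that $u\mapsto J_u$ is locally Lipschitz into $\cL(X)$. Moreover, the coercivity bound $\langle J_u v,v\rangle_X=\|v\|_u^2\ge C^{-2}\|v\|_X^2$ built into \ref{M1} ensures that $J_u$ is a locally uniform positive-definite isomorphism of $X$ with $\|J_u^{-1}\|_{\cL(X)}$ locally bounded. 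The resolvent identity $J_{u_1}^{-1}-J_{u_2}^{-1}=J_{u_1}^{-1}(J_{u_2}-J_{u_1})J_{u_2}^{-1}$ then transfers local Lipschitz continuity to $u\mapsto J_u^{-1}$.

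Next I would establish the key formula
\[
\nabla_u\Phi(u)\;=\;P_u\,J_u^{-1}\,\nabla_X\Phi(u),\qquad u\in M.
\]
For this, set $v:=J_u^{-1}\nabla_X\Phi(u)$; since $\nabla_X\Phi(u)\in T_uM$, for every $w\in T_uM$ one has
\[
\langle v,w\rangle_u \;=\; \langle J_u v,w\rangle_X \;=\; \langle\nabla_X\Phi(u),w\rangle_X \;=\; \Phi'(u)w.
\]
Because $P_u$ is the $\langle\cdot,\cdot\rangle_u$-orthogonal projection onto $T_uM$, the vector $P_u v$ lies in $T_uM$ and satisfies $\langle P_u v,w\rangle_u=\langle v,w\rangle_u=\Phi'(u)w$ for every $w\in T_uM$, which identifies it as the Riesz representative of $\Phi'(u)$ in $(T_uM,\langle\cdot,\cdot\rangle_u)$, i.e.\ as $\nabla_u\Phi(u)$.

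Finally I would conclude by combining the local Lipschitz continuity of the three factors appearing in the formula: $u\mapsto\nabla_X\Phi(u)$ is locally Lipschitz because $\Phi\in C^{1,1}(M)$; $u\mapsto J_u^{-1}$ is locally Lipschitz from the first step; and $u\mapsto P_u$ is locally Lipschitz by Lemma~\ref{lipschitz-continuity-u-projection}. All three maps are in addition locally bounded, so the standard product rule for Lipschitz maps yields local Lipschitz continuity of the composition. I expect the only mildly delicate point to be the local uniform invertibility of $J_u$, which is needed for the Lipschitz property of $u\mapsto J_u^{-1}$; this is however handed to us directly by the coercivity lower bound in \ref{M1}, and the remainder is a formal combination of assumptions and previously established lemmas.
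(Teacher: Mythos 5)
Your proof is correct, and it follows a genuinely different (and arguably cleaner) route than the paper's. The paper works directly with the difference $\langle \nabla_{u_1}\Phi(u_1)-\nabla_{u_2}\Phi(u_2),w\rangle_{u_1}$, splitting it into pieces controlled by \ref{M4}, the Lipschitz continuity of $u\mapsto P_u$, and the Lipschitz continuity of $\nabla_X\Phi$, and then closes the estimate by the test-vector trick $w:=\nabla_{u_1}\Phi(u_1)-\nabla_{u_2}\Phi(u_2)$. You instead isolate the exact factorization $\nabla_u\Phi(u)=P_u J_u^{-1}\nabla_X\Phi(u)$, where $J_u$ encodes the change of inner product. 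Your identity is correct: $P_uv$ lies in $T_uM$, $P_u$ is self-adjoint for $\langle\cdot,\cdot\rangle_u$, and hence $\langle P_uv,w\rangle_u=\langle v,w\rangle_u=\langle J_uv,w\rangle_X=\Phi'(u)w$ for $w\in T_uM$, which pins down $P_uv$ as the $u$-gradient. The three factors are locally Lipschitz and locally bounded in $\cL(X)$ (for $P_u$ the local bound follows from the local uniform equivalence of norms in \ref{M1}), so the product rule finishes the argument. Both proofs consume exactly the same hypotheses (\ref{M1}, \ref{M4}, Lemma~\ref{lipschitz-continuity-u-projection}, and $\Phi\in C^{1,1}$); what your factorization buys is a reusable structural identity and a shorter, more modular proof, while the paper's direct estimate avoids introducing the auxiliary operator $J_u$ and its inverse. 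Either is acceptable; yours is a valid alternative proof.
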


\begin{proof}
  Let $u \in M$. Since $\nabla_X \Phi$ is locally Lipschitz continuous, there exist an open (relative) neighborhood $U \subset M$ of $u_0$ and $C>0$ with the property that
  \begin{equation}
    \label{eq:lipschitz-continuity-gradient-eq-1}
  \|\nabla_X \Phi(u_1)-\nabla_X \Phi(u_2)\| \le C \|u_1-u_2\|_X \qquad \text{for $u_1,u_2 \in U$.}
  \end{equation}
  By Lemma~\ref{lipschitz-continuity-u-projection}, we may also assume that \eqref{eq:P-projection-estimate} and the uniform equivalence (\ref{local-uniform-equivalence-equation}) holds in $U$. Here 
and in the following, the letter $C>0$ stands for a constant depending only on $U$ which may change its value in every step of the estimates. For $w \in X$, we then have
  \begin{align}
\label{lipschitz-continuity-gradient-eq-1}
    \langle \nabla_{u_1} \Phi(u_1)&-\nabla_{u_2} \Phi(u_2),w \rangle_{u_1}\\
    &= \langle \nabla_{u_1} \Phi(u_1), w \rangle_{u_1} - \langle \nabla_{u_2} \Phi(u_2),w \rangle_{u_2}
    + \Bigl(\langle \nabla_{u_2} \Phi(u_2),w \rangle_{u_2}- \langle \nabla_{u_2} \Phi(u_2),w \rangle_{u_1}\Bigr),\nonumber
\end{align}
where
\begin{align}
  \langle \nabla_{u_1} \Phi(u_1), w \rangle_{u_1} &- \langle \nabla_{u_2} \Phi(u_2),w \rangle_{u_2}  = \Phi'(u_1)P_{u_1} w -\Phi'(u_2)P_{u_2} w \nonumber\\
  &= \langle \nabla_X \Phi(u_1),P_{u_1} w \rangle_X - \langle \nabla_X \Phi(u_2), P_{u_2} w \rangle_X\nonumber\\
                                                                          &=\langle \nabla_X \Phi(u_1) -\nabla_X \Phi(u_2),P_{u_1} w \rangle_X + \langle \nabla_X \Phi(u_2), P_{u_1} w - P_{u_2} w \rangle_X \nonumber\\
                                                                          &\le \|\nabla_X \Phi(u_1) -\nabla_X \Phi(u_2)\|_X \|w\|_X + \|\nabla_X \Phi(u_2)\| \|P_{u_1} w - P_{u_2} w \|_X \nonumber\\
                                                                          &\le C \|u_1-u_2\|_X \|w\|_X.                          \label{lipschitz-continuity-gradient-eq-2}
\end{align}
Here we used that we may assume, by making $U$ smaller if necessary, that $\|\nabla_X \Phi(u)\|$ remains bounded for $u \in U$. Moreover, by \ref{M4} we have  
$$
\langle \nabla_{u_2} \Phi(u_2),w \rangle_{u_1}- \langle \nabla_{u_2} \Phi(u_2),w \rangle_{u_2} \le C \|u_1-u_2\|_X \|\nabla_{u_2} \Phi(u_2)\|_X \|w\|_X \le C \|u_1-u_2\|_X \|w\|_X. 
$$
Combining the latter inequality with (\ref{lipschitz-continuity-gradient-eq-1}) and (\ref{lipschitz-continuity-gradient-eq-2}) we get 
$$
    \langle \nabla_{u_1} \Phi(u_1)-\nabla_{u_2} \Phi(u_2),w \rangle_{u_1} \le C \|u_1-u_2\|_X  \|w\|_X.
$$
Choosing $w = \nabla_{u_1} \Phi(u_1)-\nabla_{u_2} \Phi(u_2)$ gives
$$
\|\nabla_{u_1} \Phi(u_1)-\nabla_{u_2} \Phi(u_2)\|_{u_1}^2 \le C \|u-v\|_X \|\nabla_{u_1} \Phi(u_1)-\nabla_{u_2} \Phi(u_2)\|_X
$$
Since $\|\cdot\|_u$ and $\|\cdot\|_X$ are uniformly equivalent norms for $u \in U$, it follows that
$$
\|\nabla_{u_1} \Phi(u_1)-\nabla_{u_2} \Phi(u_2)\|_{X} \le C \|u_1-u_2\|_X \qquad \text{for $u_1,u_2 \in U$}
$$
as required.
\end{proof}

\begin{corollary}\label{corollary:loclipschitz}
	Let $\Phi \in C^{1,1}(M)$.  Then the negative gradient vector field 
	$$
        M \setminus K \to TM, \qquad 
        u \mapsto -\frac{\nabla_u \Phi(u)}{\| \nabla_u \Phi(u) \|_u^2}
        $$        
	is locally Lipschitz continuous.
\end{corollary}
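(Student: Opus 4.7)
The plan is to combine the local Lipschitz continuity of $u \mapsto \nabla_u \Phi(u)$ from Lemma \ref{lipschitz-continuity-gradient} with a standard quotient estimate, once the denominator $\|\nabla_u \Phi(u)\|_u^2$ is shown to be locally Lipschitz and locally bounded away from $0$ on $M \setminus K$. I would view the target $TM$ through the inclusion $TM \hookrightarrow M \times X$, so local Lipschitz continuity is meant with respect to $\|\cdot\|_X$.

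Fix $u_0 \in M \setminus K$ and choose a relative neighborhood $U \subset M$ of $u_0$ on which \ref{M1}, \ref{M4}, and the estimate of Lemma \ref{lipschitz-continuity-gradient} hold. Up to shrinking $U$, the local Lipschitz continuity also gives that $\|\nabla_u \Phi(u)\|_X$ is bounded by some constant $R$ on $U$, and then by \ref{M1} also $\|\nabla_u \Phi(u)\|_u \le CR$ on $U$. As a first step, I would show that $u \mapsto \|\nabla_u \Phi(u)\|_u^2$ is Lipschitz on $U$ by writing, for $u_1, u_2 \in U$,
\begin{align*}
\|\nabla_{u_1}\Phi(u_1)\|_{u_1}^2 - \|\nabla_{u_2}\Phi(u_2)\|_{u_2}^2
&= \bigl\langle \nabla_{u_1}\Phi(u_1) - \nabla_{u_2}\Phi(u_2),\, \nabla_{u_1}\Phi(u_1) + \nabla_{u_2}\Phi(u_2)\bigr\rangle_{u_1} \\
&\quad + \bigl(\langle \nabla_{u_2}\Phi(u_2), \nabla_{u_2}\Phi(u_2)\rangle_{u_1} - \langle \nabla_{u_2}\Phi(u_2), \nabla_{u_2}\Phi(u_2)\rangle_{u_2}\bigr).
\end{align*}
The first summand is controlled by Cauchy--Schwarz, the bound on $\|\nabla_u \Phi(u)\|_{u}$, and Lemma \ref{lipschitz-continuity-gradient}; the second is bounded by $C\|u_1 - u_2\|_X \|\nabla_{u_2}\Phi(u_2)\|_X^2$ by virtue of \ref{M4}. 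This yields a constant $L>0$ with $|\|\nabla_{u_1}\Phi(u_1)\|_{u_1}^2 - \|\nabla_{u_2}\Phi(u_2)\|_{u_2}^2| \le L\|u_1 - u_2\|_X$ on $U$.

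Since $u_0 \notin K$, we have $\nabla_{u_0}\Phi(u_0) \neq 0$, so $\|\nabla_{u_0}\Phi(u_0)\|_{u_0}^2 > 0$. By the continuity just established, I can shrink $U$ once more so that $\|\nabla_u \Phi(u)\|_u^2 \ge \delta$ for some $\delta > 0$ on $U$. Then for $u_1, u_2 \in U$ the algebraic identity
\begin{equation*}
\frac{\nabla_{u_1}\Phi(u_1)}{\|\nabla_{u_1}\Phi(u_1)\|_{u_1}^2} - \frac{\nabla_{u_2}\Phi(u_2)}{\|\nabla_{u_2}\Phi(u_2)\|_{u_2}^2}
= \frac{\nabla_{u_1}\Phi(u_1) - \nabla_{u_2}\Phi(u_2)}{\|\nabla_{u_1}\Phi(u_1)\|_{u_1}^2}
+ \frac{\|\nabla_{u_2}\Phi(u_2)\|_{u_2}^2 - \|\nabla_{u_1}\Phi(u_1)\|_{u_1}^2}{\|\nabla_{u_1}\Phi(u_1)\|_{u_1}^2 \,\|\nabla_{u_2}\Phi(u_2)\|_{u_2}^2}\, \nabla_{u_2}\Phi(u_2),
\end{equation*}
together with the bounds $\|\nabla_{u_i}\Phi(u_i)\|_{u_i}^2 \ge \delta$, $\|\nabla_{u_2}\Phi(u_2)\|_X \le R$, Lemma \ref{lipschitz-continuity-gradient}, and the Lipschitz estimate from the previous paragraph, gives the desired $\|\cdot\|_X$-Lipschitz estimate on $U$.

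There is no serious obstacle; the content is essentially repackaging the two preceding lemmas. The only point deserving attention is the uniform control of $\|\nabla_u\Phi(u)\|_u$ and $\|\nabla_u\Phi(u)\|_u^2$ from above and below on a neighborhood of $u_0$, which is where both \ref{M1} and the condition $u_0 \notin K$ enter the argument.
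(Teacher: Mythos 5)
Your proof is correct and is precisely the standard argument the paper invokes when it writes that the corollary "follows in a standard way from Lemma~\ref{lipschitz-continuity-gradient}": you establish local Lipschitz continuity and a local positive lower bound for the denominator $u \mapsto \|\nabla_u\Phi(u)\|_u^2$ using \ref{M1}, \ref{M4}, Lemma~\ref{lipschitz-continuity-gradient}, and $u_0 \notin K$, and then apply the usual quotient estimate. Nothing is missing.
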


\begin{proof}
  This follows in a standard way from Lemma~\ref{lipschitz-continuity-gradient}.
\end{proof}

Next we note tha $G$-invariance of the vector field $u \mapsto \nabla_u \Phi(u)$, which is a consequence of assumption \ref{M3}.

\begin{lemma}\label{lemma:invariant}
Let $\Phi \in C^{1,1}(M)$. Then the gradient vector field $u \mapsto \nabla_u \Phi(u)$ is $G$-equivariant. More precisely, we have
$$
\nabla_{g \ast u} \Phi(g \ast u) = g \ast \nabla_{u} \Phi(u) \quad \forall u \in M, g \in G.
$$
and
  \begin{equation}
    \label{eq:gradient-norm-G-invariance}
 \| \nabla_{g \ast u} \Phi(g \ast u) \|_{g \ast u} = \| \nabla_u \Phi(u) \|_u \quad \text{for all } u \in M,g \in G.
  \end{equation}
\end{lemma}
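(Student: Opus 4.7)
The plan is straightforward: differentiate the $G$-invariance identity $\Phi \circ (g \ast \cdot) = \Phi$ and match the resulting relation with the defining relation for $\nabla_u \Phi$ via the inner product invariance \ref{M3}. First I would fix $u \in M$ and $g \in G$. Since $g$ acts linearly on $X$ and preserves $M$, its differential at $u$ is just $w \mapsto g \ast w$, and it maps $T_u M$ bijectively onto $T_{g \ast u} M$. Differentiating $\Phi(g \ast u) = \Phi(u)$ in the direction $w \in T_u M$ therefore yields the chain-rule identity
$$
\Phi'(g \ast u)(g \ast w) = \Phi'(u) w \qquad \text{for all } w \in T_u M.
$$
Inserting the defining relations of the two $u$-dependent gradients into both sides gives
$$
\langle \nabla_{g \ast u}\Phi(g \ast u),\, g \ast w \rangle_{g \ast u} \;=\; \Phi'(g \ast u)(g \ast w) \;=\; \Phi'(u)w \;=\; \langle \nabla_u \Phi(u),\, w \rangle_u
$$
for every $w \in T_u M$.

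Next I would apply \ref{M3} with $v = \nabla_u \Phi(u)$ to rewrite the right-hand side as
$$
\langle \nabla_u \Phi(u),\, w \rangle_u = \langle g \ast \nabla_u \Phi(u),\, g \ast w \rangle_{g \ast u},
$$
noting that $g \ast \nabla_u \Phi(u) \in T_{g \ast u} M$ by $G$-equivariance of the tangent bundle. This turns the earlier cross-space identity into an identity inside the single Hilbert space $(T_{g \ast u} M, \langle \cdot, \cdot \rangle_{g \ast u})$: the two elements $\nabla_{g \ast u}\Phi(g \ast u)$ and $g \ast \nabla_u \Phi(u)$ of $T_{g \ast u} M$ pair identically with every vector $\tilde w = g \ast w$, and since $w \mapsto g \ast w$ is a bijection $T_u M \to T_{g \ast u} M$, this covers all test vectors in $T_{g \ast u} M$. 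Riesz uniqueness then forces
$$
\nabla_{g \ast u}\Phi(g \ast u) = g \ast \nabla_u \Phi(u),
$$
which is the equivariance identity.

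Finally, the norm equality \eqref{eq:gradient-norm-G-invariance} is immediate: applying \ref{M3} once more with $v = w = \nabla_u \Phi(u)$ gives
$$
\|\nabla_{g \ast u}\Phi(g \ast u)\|_{g \ast u}^2 = \|g \ast \nabla_u \Phi(u)\|_{g \ast u}^2 = \langle \nabla_u \Phi(u), \nabla_u \Phi(u) \rangle_u = \|\nabla_u \Phi(u)\|_u^2.
$$
I do not expect any substantive obstacle, since the lemma is essentially a bookkeeping statement. The only subtle point, worth highlighting in the write-up, is that one must not naively identify the two sides of the cross-space identity $\langle \nabla_{g \ast u}\Phi(g \ast u), g \ast w \rangle_{g \ast u} = \langle \nabla_u \Phi(u), w \rangle_u$, because the two inner products live on different Hilbert spaces with different metrics; assumption \ref{M3} is precisely the device that transports the right-hand side into $(T_{g \ast u} M, \langle \cdot, \cdot \rangle_{g \ast u})$ so that Riesz uniqueness can be applied.
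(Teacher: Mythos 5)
Your argument is correct and follows essentially the same path as the paper: chain rule for the $G$-invariance of $\Phi$, then \ref{M3} to transport the identity into a single Hilbert space, then nondegeneracy of the inner product. The paper applies \ref{M3} on the left-hand side with $g^{-1}$ to work in $T_u M$, whereas you apply it on the right-hand side to work in $T_{g\ast u}M$; this is a cosmetic difference only.
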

\begin{proof}
	Let $u \in M$ and $g \in G$. Since $\Phi$ is $G$-invariant, the chain rule gives $ \Phi'(g \ast u) g \ast v = \Phi'(u)v$
for every $v \in T_uM$ and therefore, by \ref{M3},
	$$\langle g^{-1} \ast \nabla_{g \ast u}\Phi(g \ast u),  v \rangle_{u} = \langle \nabla_{g \ast u}\Phi(g \ast u), g \ast v \rangle_{g \ast u} = \Phi'(g \ast u) g \ast v = \Phi'(u) v = \langle \nabla_u\Phi(u), v \rangle_u$$
for every $v \in T_uM$. We conclude that $ \nabla_{g \ast u} \Phi(g \ast u) = g \ast \nabla_u \Phi(u)$, and applying again \ref{M3} gives
	$$ \| \nabla_{g \ast u} \Phi(g \ast u) \|_{g \ast u}  = \| g \ast \nabla_{u} \Phi(u) \|_{g \ast u} = \| \nabla_u \Phi(u) \|_u.$$
\end{proof}

\subsection{A Deformation Lemma under the $(NCG)$-condition}
\label{subsec:cerami-case}
We continue using the notation from previous sections, so we let $M \subset  X \setminus \{0\}$
be a submanifold of class $C^{1,1}$ with $M = -M$. and we let $\Phi \in C^{1,1}(M)$ be an even functional. We also recall the function $N$ defined in \eqref{eq:def-N-w}
\begin{equation*}
N(w):= \inf \{\|u\|_u + d(u,w)\::\: u \in M\} \qquad \text{for $w \in M$.}
\end{equation*}
Chosing $w=u$ gives the basic inequality
\begin{equation}
  \label{N-w-basic-ineq-1}
N(w) \le \|w\|_w \qquad \text{for all $w \in M$.}
\end{equation}
Moreover, we have 
\begin{equation}
  \label{N-w-basic-ineq-2}
N(w) \le N(v) + d(v,w) \qquad \text{for $v,w \in M$.}
\end{equation}
We also recall the neighborhoods of the critical set $K_c$ of $\Phi$ at level $c \in \R$ given by  
$$
A_{c,\rho}:= \{v \in M\::\: \dist_M(v,K_c) < \rho\}, \qquad \rho>0.
$$
Observe that the sets $A_{c,\rho}$ are $G$-invariant and open with respect to the topology induced by $d$, hence also open with respect to the norm $\|\cdot\|_X$ by Lemma~\ref{compatability-lipschitz}.
\begin{lemma}
  \label{N-boundedness-neighborhoods-critical}
Let $c \in \R$ be chosen such that \ref{NC} holds. Then, for every $\rho >0$, the set 
$$
A_{c,\rho}= \{v \in M\::\: \dist_M(v,K_c) < \rho\}
$$
is $N$-bounded.
\end{lemma}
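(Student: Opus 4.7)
The plan is to argue by contradiction, using the definition of $\dist_M$ to replace an arbitrary sequence in $A_{c,\rho}$ by a sequence of genuine critical points, and then to apply the compactness condition \ref{NC} after checking that $N$ is $G$-invariant.

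First, I would assume for contradiction that there is a sequence $(w_n)_n \subset A_{c,\rho}$ with $N(w_n) \to \infty$. By definition of $A_{c,\rho}$, for each $n$ there exists $v_n \in K_c$ with $d(v_n, w_n) < \rho$. The basic inequality \eqref{N-w-basic-ineq-2} then yields
$$
N(v_n) \ge N(w_n) - d(v_n, w_n) > N(w_n) - \rho \longrightarrow \infty.
$$
On the other hand, since $v_n \in K_c$ we have $\Phi(v_n) = c$ and $\nabla_{v_n}\Phi(v_n) = 0$, so trivially $\|\nabla_{v_n}\Phi(v_n)\|_{v_n}(1 + N(v_n)) = 0$ for every $n$. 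Hence $(v_n)_n$ is an $(NC)_c$-sequence.

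Next, the key observation is that $N$ is $G$-invariant. Indeed, assumption \ref{M3} immediately gives that the length $l(\gamma)$ of any curve is $G$-invariant, i.e.\ $l(g \ast \gamma) = l(\gamma)$, so the metric $d$ is $G$-invariant: $d(g \ast u, g \ast v) = d(u,v)$. Together with \ref{M3} applied to $\|u\|_u$, the change of variables $u \mapsto g^{-1} \ast u$ in the infimum defining $N$ shows
$$
N(g \ast w) = \inf_{u \in M}\bigl(\|u\|_u + d(u, g \ast w)\bigr) = \inf_{u' \in M}\bigl(\|u'\|_{u'} + d(u', w)\bigr) = N(w).
$$

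Now I would invoke \ref{NC}: after passing to a subsequence, there are $g_n \in G$ and $u \in M$ with $g_n \ast v_n \to u$ in $X$. By Lemma~\ref{compatability-lipschitz}(i) this implies $d(g_n \ast v_n, u) \to 0$. Using the $G$-invariance of $N$ and then \eqref{N-w-basic-ineq-2}, we get
$$
N(v_n) = N(g_n \ast v_n) \le \|u\|_u + d(u, g_n \ast v_n) \longrightarrow \|u\|_u < \infty,
$$
contradicting $N(v_n) \to \infty$. This contradiction shows that $N(A_{c,\rho}) < \infty$, i.e.\ $A_{c,\rho}$ is $N$-bounded.

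The only slightly delicate step is the verification that $N$ is $G$-invariant, but this reduces to a clean change of variables once the $G$-invariance of $d$ is noted from \ref{M3}; otherwise the argument is a direct application of the compactness hypothesis \ref{NC} together with the built-in triangle-type inequality~\eqref{N-w-basic-ineq-2}.
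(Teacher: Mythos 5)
Your proof is correct and follows essentially the same strategy as the paper: both reduce to a sequence in $K_c$ (a trivial $(NC)_c$-sequence), apply \ref{NC} to extract $g_n \ast v_n \to u$, and use the $G$-equivariance built into \ref{M3} to obtain a contradicting bound. The minor difference is in the last step: the paper uses $N(u_n) \le \|u_n\|_{u_n}$ (i.e.\ \eqref{N-w-basic-ineq-1}), the invariance $\|u_n\|_{u_n} = \|g_n \ast u_n\|_{g_n \ast u_n}$, and the locally uniform norm equivalence \ref{M1} to bound $\|u_n\|_{u_n}$ directly, whereas you first verify the $G$-invariance $N(g\ast w)=N(w)$ (a clean and correct consequence of \ref{M3}) and then bound $N(g_n \ast v_n)$ via the definition of $N$ and Lemma~\ref{compatability-lipschitz}(i). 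Also, the paper first proves $N(K_c)<\infty$ and then propagates to $A_{c,\rho}$ via \eqref{N-w-basic-ineq-2}, while you run the contradiction with a sequence in $A_{c,\rho}$ and pass to nearby critical points; these are logically interchangeable. Your explicit observation that $N$ is $G$-invariant is worth recording, since it explains conceptually why $N$-boundedness is the right notion here.
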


\begin{proof} 
  We first show that $N(K_c)<\infty$. Suppose by contradiction that there exists a sequence $(u_n)_n$ in $K_c$ with $N(u_n) \to + \infty$, which by (\ref{N-w-basic-ineq-1}) also implies that $\|u_n\|_{u_n} \to \infty$. Since $(u_n)_n$ is an $(NCS)_C$-sequence, condition \ref{NC} implies that there exist a sequence $(g_n)_n$ in $G$ and $u \in M$ with $g_n  \ast  u_n \to u$ in $X$ as $n \to \infty$. By the locally uniform equivalence assumption \ref{M1} and assumption \ref{M3}, it follows that
  $$
  \|u_n\|_{u_n} = \|g_n \ast  u_n\|_{g_n \ast  u_n} \le C \|g_n \ast  u_n\|_X \to C \|u\|_X,
  $$
  a contradiction. Hence $N(K_c)<\infty$. Now let $\rho >0$. For every $w \in A_{c,\rho}$ there exists $u \in K_c$ with $
	 d(u,w)< \rho $ and therefore, by (\ref{N-w-basic-ineq-2}), 
  $$
  N(w) \le N(u) + d(u,w) \le N(K_c) + \rho,
  $$
  which implies that $N(A_{c,\rho}) \le N(K_c) + \rho< \infty$.
\end{proof}

Now, we present the proof of Lemma~\ref{deformation-lemma-cerami}, our main deformation lemma.

\begin{proof}[Proof of Lemma~\ref{deformation-lemma-cerami}]
  Let $\rho>0$. First we find $\varepsilon \in (0,\varepsilon_0)$ with 
  \begin{equation}
    \label{eq:first-eps-delta-est-cerami}
  \|\nabla_u \Phi(u)\|_{u}(1+N(u)) \ge \frac{8\varepsilon}{\rho} (1+N(A_{c,\rho}))  \qquad \text{for $u \in \Phi^{-1} ([c-2\varepsilon,c+2\varepsilon]) \setminus A_{c,\frac{\rho}{4}}$,}
  \end{equation}
where $N(A_{c,\rho})$ is defined in (\ref{eq:def-N-A}) and finite by Lemma~\ref{N-boundedness-neighborhoods-critical}. Suppose that no such $\varepsilon$ exists. Then, for every $n \in \N$, there is $u_n \in \Phi^{-1} ([c-\frac{2}{n},c+\frac{2}{n}])\setminus A_{c,\frac{\rho}{4}}$ with
    $$
\|\nabla_{u_n} \Phi(u_n)\|_{u_n}(1+N(u_n)) < \frac{8}{\rho n}(1+N(A_{c,\rho})).
    $$
    Consequently, $(u_n)_n$ is a $(NCS)_c$-sequence for $\Phi$. Using condition \ref{NC}, we find $u \in K^c$ and a sequence $(g_n)_n$ in $G$ with $v_n:= g_n  \ast  u_n \to u$ in $X$ after passing to a subsequence. Then, we then have
    $$
    d(u_n, g_n^{-1} \ast u) = d(v_n,u) \to 0 \qquad \text{as $n \to \infty$,}
    $$
    whereas $g_n^{-1} \ast u \in K^c$ for all $n \in \N$. Consequently, $u_n \in A_{c,\frac{\rho}{4}}$ for sufficiently large $n$, which yields a contradiction. Hence (\ref{eq:first-eps-delta-est-cerami}) is true.

    Next, we consider the sets
	$$ 
A := \Phi^{-1}([c-2 \varepsilon, c+2\varepsilon]) \setminus A_{c, \frac{\rho}{4}}\quad \text{and} \quad B:= \Phi^{-1}([c- \varepsilon, c+\varepsilon])\setminus A_{c,\frac{\rho}{2}},
$$
and we define the map 
$$
\nu: M \to [0,1], \qquad \nu(u) := \frac{\dist_M(u,M \setminus A)}{\dist_M(u,M \setminus A) + \dist_M(u,B)}.
$$
We note the $\nu$ is locally Lipschitz continuous and satisfies $\nu(u) = 1$ if $u \in B$ and $\nu(u)=0$ if $u \in M \setminus A$.  Now, we define the locally Lipschitz continuous vector field 
\begin{equation}
\label{def-vector-field-W-cerami}  
W : M \to TM,\qquad 	W(u):= \begin{cases}
		-\nu(u) \|\nabla \Phi_u(u) \|_u^{-2} \nabla \Phi_u(u) \quad &\text{for } u \in A,\\
		0 \quad &\text{for } u \in M \setminus A.
	\end{cases}
\end{equation}
Since 
\begin{equation}
\label{W-u-est-cerami}
\|W(u)\|_u \leq  \frac{\rho}{8 \varepsilon (1+N(A_{c,\rho}))} (1+N(u)) \qquad  \text{for all $u \in M$}  
\end{equation}
holds by (\ref{eq:first-eps-delta-est-cerami}), it is not difficult to see that the flow generated by $W$ is defined globally,
i.e. there exists a map $\Theta : \R \times M \to M$ with
$$
\partial_t \Theta(t,u) = W(\Theta(t,u))\quad \text{and}\quad \Theta(0,u) = u \qquad \text{for all $u \in M$, $t \in \R$,}
$$
and with the property that $\Theta(t,\cdot): M \to M$ is a homeomorphism for all $t \in \R$. To see this, we observe that by (\ref{N-w-basic-ineq-2}) we have 
\begin{align*}
  N(\Theta(t,u))&\le N(u) + d(u,\Theta(t,u)) \le N(u)+ \int_0^t \|\partial_s \Theta(s,u)\|_{\Theta(s,u)}ds\\
&= N(u)+ \int_0^t \|W(\Theta(s,u)\|_{\Theta(s,u)}ds \le N(u) + \frac{\rho}{8 \varepsilon (1+N(A_{c,\rho}))}\Bigl(t + \int_0^t N(\Theta(s,u))\,ds\Bigr)   
\end{align*}
and therefore, by Gronwalls Lemma, the function $t \mapsto N(\Theta(t,u))$ remains bounded on finite time intervals, which by (\ref{W-u-est-cerami}) also implies that the function $t \mapsto \|W(\Theta(t,u))\|_u$ remains bounded on finite time intervals. Now, let $u \in M$ and let 
$$T_+ = T_+(u) := \sup \{ t > 0 : \eta(\cdot,u) \text{ is defined in } [0,t] \}  > 0.$$
First, observe that $u \in M \setminus A$ then $W(u) = 0$ so that $T_+ = \infty$. Thus, we take $u \in A$ and arguing by contradiction, we assume that $T_+ < \infty$. Now, we fix a positive sequence $t_n \nearrow T_+$ and set $u_n := \Theta(t_n,u)$. In this way, for $n < m$ we have
\begin{align}\label{eq:cauchy_d}
	d(u_n,u_m) \leq \int_{t_n}^{t_m} \| \partial_t \Theta(t,u) \|_{\eta(t,u)} dt \leq C |t_m - t_n|.
\end{align}
Now, since $(u_n)_n \subset \Phi^{-1}([c-2\varepsilon,c+2\varepsilon])$, from \eqref{eq:cauchy_d} we have $u_n \subset \Phi^{-1}([c-2\varepsilon_0,c+2\varepsilon_0])$ for large $n$. Then, since $\Phi^{-1}([c-2\varepsilon_0,c+2\varepsilon_0])$ is complete, we have $u_n \to u_+ \in \Phi^{-1}([c-2\varepsilon_0,c+2\varepsilon_0])$. Hence, we may extend the integral line of $u$ by $T_+(u_+)$ beyond $T_+(u)$, contradicting the definition of $T_+$.

Now, we define
$$
\eta: [0,1] \times M \to M, \qquad \eta(t,u) := \Theta(2 \varepsilon t,u).
$$
Then properties $(i)$ and $(ii)$ hold. To see $(iv)$, we compute
\begin{align}
	\frac{d}{dt} \Phi(\eta(t,u)) &= - \frac{2 \varepsilon}{\| \nabla_{\eta(t,u)} \Phi(\eta(t,u))\|_{\eta(t,u)}^2 } \langle \nabla_{\eta(t,u)} \Phi(\eta(t,u)) , \nu(\eta(t,u)) \nabla_{\eta(t,u)} \Phi(\eta(t,u)) \rangle_{\eta(t,u)} \nonumber \\
	& = - 2\varepsilon \nu(\eta(t,u)) \leq 0. \label{derivative-computation}
\end{align} 
To prove $(iii)$ we fix $u \in \Phi^{-1}([c-\varepsilon,c+\varepsilon]) \setminus A_{c,\rho}$. If there is $ t \in [0,1]$ such that $\Phi(\eta(t,u)) \leq c- \varepsilon$, then we get $(iii)$ from the monotonicity of $t \mapsto \Phi(\eta(t,u))$.
Otherwise, assume that $\Phi(\eta(t,u)) \in (c-\varepsilon,c+\varepsilon]$ for all $t \in [0,1]$, and suppose by contradiction that $\eta(t_*,u) \not \in B$ for some $t_* \in (0,1]$, which implies that $\eta(t_*,u) \in A_{c,\frac{\rho}{2}}$. Then, there exist $0\le t_1<t_2 \le t_*$ with
$$
d(\eta(t_1,u),\eta(t_2,u)) \ge \frac{\rho}{2} \qquad \text{and}\qquad \eta(s,u) \in A_{c,\rho} \setminus A_{c,\frac{\rho}{2}} \quad \text{for $t_1 < s < t_2$.}
$$
It follows, by (\ref{derivative-computation}) and (\ref{W-u-est-cerami}), that 
\begin{align*}
  \frac{\rho}{2} &\le d(\eta(t_1,u),\eta(t_2,u)) \leq \int_{t_1}^{t_2} \bigl\|\frac{d}{ds}\eta(s,u))\bigr\|_{\eta(s,u)}= 2 \eps \int_{t_1}^{t_2} \bigl\|W(\eta(s,u))\|_{\eta(s,u)}ds \\ &\le \frac{\rho}{4(1+N(A_{c,\rho}))}  \int_{t_1}^{t_2} (1+N(\eta(s,u)) ds
                                                                                                                                                                                        \leq \frac{\rho}{4}(t_2-t_1) \leq \frac{\rho}{4},
\end{align*}
a contradiction. Hence $\eta(t,u) \in B$ for $t \in (0,1)$, but then we have $\nu(\eta(t,u))=1$ for all $t \in (0,1)$ and therefore
\begin{align*}
	\Phi(\eta(1,u)) &= \int_0^1 \frac{d}{dt} \Phi(\eta(t,u)) dt + \Phi(u) = \Phi(u) -2\varepsilon \int_0^1 \nu(\eta( t,u)) dt  \\
	& \Phi(u)-2 \varepsilon \leq c - \eps,
\end{align*} 
which contradicts again our assumption. Thus $(iii)$ holds.

Finally, to see $(iv)$, we note that the sets $A$ and $B$ defined above are $G$-invariant, and therefore also the function $\nu$ is $G$-invariant. Combining this information with Lemma~\ref{lemma:invariant}, we infer that the vector field $W$ defined in (\ref{def-vector-field-W-cerami}) is $G$-equivariant, which in turn yields the $G$-equivariance of the maps $\Theta$ and $\eta$.

The same arguments apply with $\{\id, -\id\}$ in place of $G$  in the case where $M$ and $\Phi$ are invariant with respect to $-\id:X \to X$. Hence the function $\eta$ is odd.
\end{proof}

\subsection{Completion of the proof of Theorem~\ref{c-k-to-infty}}
\label{sec:compl-proof-theor}
We continue using the notation from previous sections, and we recall the notion of the \textit{Krasnoselskii genus}. Let $Z \subset X$ be a locally closed\footnote{A subset $Z \subset X$ is called \textit{locally closed in $X$} if there are subsets $A,B \subset X$, with $A$ open in $X$ and $B$ closed in $X$, such that $Z = A \cap B$.} symmetric subset of $X$. If $Z = \varnothing$, we put $\gamma(Z)=0$. If $Z\neq \varnothing$, we let $\gamma(Z)$ be the smallest integer $k \geq 1$ such that there exists an odd continuous function $h: Z \to \mathbb{S}^{k-1}$, where $\mathbb{S}^{k-1}$ is the unit sphere in $\mathbb{R}^{k}$. If no such $k$ exists, we define $\gamma(Z) := \infty$.

Observe that, since $M \subset X \setminus \{0\}$ is a submanifold, then it is locally closed in $X$, thus, the (relative) closed sets $\Phi^c \subset M$ are locally closed in $X$. We then recall the Ljusternik-Schnirelmann values for $\Phi$ given by 
\begin{equation*}
	c_k := \inf\{c > 0 : \gamma(\Phi^c) \geq k\} \quad \in [-\infty,\infty] ,\qquad \text{for $k \in \N$.} 
\end{equation*}
We then have the following.

\begin{proposition}
	\label{c-k-critical-value}
\begin{sloppypar}
		Let \ref{NC} be satisfied at $-\infty<c_k<\infty$ for some $k \in \N$ and that ${(\Phi^{-1}([c-\varepsilon_0,c+\varepsilon_0]),d)}$ is complete for some $\eps_0>0$. Then $c_k$ is a critical value of $\Phi$.
\end{sloppypar}
\end{proposition}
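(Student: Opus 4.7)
The plan is to argue by contradiction using the $G$-equivariant deformation lemma (Lemma~\ref{deformation-lemma-cerami}) together with the standard monotonicity of the Krasnoselskii genus under odd continuous maps. So I assume that $c_k$ is not critical, i.e.\ $K_{c_k}=\varnothing$. Consequently $A_{c_k,\rho}=\varnothing$ for every $\rho>0$.

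Next, I apply Lemma~\ref{deformation-lemma-cerami} at level $c=c_k$ with some fixed $\rho>0$. The completeness hypothesis of the proposition is stated on $\Phi^{-1}([c_k-\eps_0,c_k+\eps_0])$, so I simply use $\eps_0/2$ in place of $\eps_0$ when invoking the deformation lemma; condition \ref{NC} at $c_k$ is given. The lemma then produces $\eps\in(0,\eps_0/2)$ and a continuous map $\eta:[0,1]\times M\to M$ such that $\eta(1,\cdot)$ is an odd homeomorphism of $M$ and
$$
\eta\bigl(1,\Phi^{c_k+\eps}\setminus A_{c_k,\rho}\bigr)\subset\Phi^{c_k-\eps}.
$$
Since $A_{c_k,\rho}=\varnothing$, this reduces to $\eta(1,\Phi^{c_k+\eps})\subset\Phi^{c_k-\eps}$.

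Now I exploit the definition of $c_k$. Since $c_k$ is the infimum of the values $c$ with $\gamma(\Phi^c)\geq k$ and the map $c\mapsto\gamma(\Phi^c)$ is nondecreasing, one has $\gamma(\Phi^{c_k+\eps})\geq k$. The sublevel set $\Phi^{c_k+\eps}$ is relatively closed in $M$, and because $M$ is a symmetric submanifold of $X\setminus\{0\}$ it is locally closed in $X$; together with $\Phi$ being even this makes $\Phi^{c_k+\eps}$ a symmetric, locally closed subset of $X$, so its genus is well-defined. The map $\eta(1,\cdot)$ being an odd homeomorphism of $M$ preserves these properties, so the standard monotonicity of the genus under odd continuous maps yields
$$
\gamma(\Phi^{c_k-\eps})\geq\gamma\bigl(\eta(1,\Phi^{c_k+\eps})\bigr)\geq\gamma(\Phi^{c_k+\eps})\geq k,
$$
which contradicts the definition of $c_k$ as an infimum.

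I do not expect a genuine obstacle in this argument: all the hard work is already packaged inside Lemma~\ref{deformation-lemma-cerami}. The only mildly delicate point is to make sure that the sets to which the genus is applied are symmetric and locally closed in $X$; this is however automatic from $M=-M$, $M$ being a $C^{1,1}$-submanifold of $X\setminus\{0\}$ (hence locally closed), and $\Phi$ being even. One minor bookkeeping item is matching the completeness radius $\eps_0$ in the hypothesis with the $2\eps_0$ appearing in the deformation lemma, which is resolved by rescaling as indicated above.
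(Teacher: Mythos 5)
Your proof is correct and follows essentially the same approach as the paper: assume $K_{c_k}=\varnothing$, so $A_{c_k,\rho}=\varnothing$, apply Lemma~\ref{deformation-lemma-cerami} to get an odd continuous map taking $\Phi^{c_k+\eps}$ into $\Phi^{c_k-\eps}$, and derive a contradiction with the definition of $c_k$ via the monotonicity of the Krasnoselskii genus. The extra bookkeeping you include (the $\eps_0$ vs. $2\eps_0$ rescaling, the local closedness/symmetry of sublevel sets needed for the genus to be well-defined) is carried out implicitly in the paper; including it is fine and does not change the argument.
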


\begin{proof}
	Let $c:= c_k$. If $K_c = \varnothing$, then $A_{c,\rho}= \varnothing$ for every $\rho>0$, and by Lemma \ref{deformation-lemma-cerami} there exists $\eps=\eps(c,\rho) \in (0,\frac{\eps_0}{2})$ and an odd continuous function $\eta: M \to M$ with
	$\eta(\Phi^{c+\eps}) \subset \Phi^{c-\eps}$ and $\eta(u) = u$ if $u \notin \Phi^{-1}([c-2\varepsilon,c+2\varepsilon])$. Hence, the monotone property of the genus yields $\gamma(\Phi^{c+\eps}) \le \gamma(\Phi^{c-\eps})<k$, which contradicts the definition of $c_k$. It follows that $K_c \not = \varnothing$.
\end{proof}

Next, we derive the following important property from the abstract assumptions \ref{Gstar} and \ref{Ic} formulated in Subsection~\ref{sec:abstr-sett-main}. 

\begin{lemma}
	\label{krasnoselski-finite}
	Suppose that $G$ satisfies \ref{Gstar}, and let $c > c_*$ be such that the conditions \ref{NC} and \ref{Ic} hold. Then there exists $\rho_0=\rho_0(c)>0$ such that
	$$
	\gamma(A_{c,\rho}) < \infty,
	$$
	for all $\rho < \rho_0$.
\end{lemma}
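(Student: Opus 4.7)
If $K_c=\varnothing$, then $A_{c,\rho}=\varnothing$ for all $\rho>0$ and the claim is trivial; so I assume $K_c\neq\varnothing$. The plan is to construct, for all sufficiently small $\rho>0$, an odd continuous map $\Psi:A_{c,\rho}\to\R^N\setminus\{0\}$ with $N$ finite, which immediately gives $\gamma(A_{c,\rho})\le N$. The guiding idea is that \ref{NC} forces the orbit space $K_c/G$ to be compact, while \ref{Ic} uniformly separates antipodal orbits in $K_c/G$; together these permit a finite cover-and-bump-function construction.

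First, I would introduce a quotient metric on $M/G$. The distance $d$ on $M$ is $G$-invariant by \ref{M3} (via the change of variables $\gamma\mapsto g\ast\gamma$ in the defining integral), so
$$
d_G([u],[v]):=\inf_{g\in G} d(u,g\ast v)
$$
is a pseudo-metric on $M/G$. Combining \ref{Gstar} with Lemma~\ref{compatability-lipschitz}~(i) (which identifies $d$-convergence with $X$-convergence) upgrades $d_G$ to a proper metric. Any $(u_n)\subset K_c$ is trivially an $(NC)_c$-sequence, so \ref{NC}, together with Lemma~\ref{lipschitz-continuity-gradient} and Lemma~\ref{lemma:invariant}, yields $g_n\ast u_n\to u\in K_c$ in $X$ along a subsequence, i.e.\ $[u_n]\to[u]$ in $d_G$. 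Hence $(K_c/G,d_G)$ is sequentially compact, and therefore compact. Condition \ref{Ic} says exactly that the involution $[u]\mapsto[-u]$ on $K_c/G$ is free, so by continuity and compactness
$$
\delta_0:=\min_{[u]\in K_c/G} d_G([u],[-u])>0.
$$

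Next, I would fix $0<\eta'<\eta<\delta_0/4$ and, using total boundedness, choose a finite $\eta'$-net $[u_1],\dots,[u_N]$ of $K_c/G$. The $G$-invariant Lipschitz cut-offs
$$
f_i(u):=\max\bigl(\eta-d_G([u],[u_i]),\,0\bigr),\qquad \Psi_i(u):=f_i(u)-f_i(-u),
$$
are continuous on $(M,\|\cdot\|_X)$, and $\Psi:=(\Psi_1,\dots,\Psi_N):M\to\R^N$ is odd. For $u\in K_c$, choosing $i$ with $d_G([u],[u_i])<\eta'$ gives $f_i(u)\ge\eta-\eta'>0$; since $d_G([-u],[u_i])\ge\delta_0-\eta'>\eta$, we have $f_i(-u)=0$, hence $\Psi_i(u)>0$ and $\Psi(u)\in\R^N\setminus\{0\}$. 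The open $G$-invariant set $U:=\Psi^{-1}(\R^N\setminus\{0\})$ thus contains $K_c$. Since $(M\setminus U)/G$ is closed in $(M/G,d_G)$ and disjoint from the compact set $K_c/G$, one has $\rho_0:=d_G(K_c/G,(M\setminus U)/G)>0$, and $A_{c,\rho}\subset U$ for every $\rho<\rho_0$. For such $\rho$, $u\mapsto\Psi(u)/\|\Psi(u)\|$ is an odd continuous map $A_{c,\rho}\to S^{N-1}$, and therefore $\gamma(A_{c,\rho})\le N<\infty$.

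The main obstacle is the careful construction of $d_G$: one has to combine \ref{Gstar} with the local but non-uniform equivalence between $d$ and $\|\cdot\|_X$ from Lemma~\ref{compatability-lipschitz} in order to upgrade sequential compactness of $K_c$ modulo $G$ into genuine compactness of the metric space $(K_c/G,d_G)$. Once compactness and the separation $\delta_0>0$ are available, the cover-and-bump construction above is routine.
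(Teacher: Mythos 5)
Your argument follows the same route as the paper's own proof: pass to the orbit space $M/G$ with the quotient metric $d_G$, use \ref{NC} (together with the $G$-equivariance of the gradient and Lemma~\ref{compatability-lipschitz}) to deduce compactness of $K_c/G$, use \ref{Ic} to get a uniform antipodal separation $\delta_0>0$, and then produce an odd continuous map into $\R^N\setminus\{0\}$ on a $d$-tube $A_{c,\rho}$ around $K_c$. The only cosmetic difference is that where the paper first records $\gamma(\tilde K_c)<\infty$ via a finite covering and then invokes Tietze's extension theorem to extend an odd map $\tilde K_c \to S^{k-1}$ to a neighborhood, you build the odd nonvanishing map directly with $G$-invariant bump functions $f_i$, which bypasses Tietze while carrying out essentially the same covering construction.
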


\begin{proof}
	We consider the quotient space $\tilde M$ of $M$ with respect to the action of the group $G$. The elements of $\tilde M$ are the orbits $\bar u:= \{g  \ast  u\::\: g \in G \}$ with $u \in M$.\\
	{\bf Claim 1:} $\tilde M$ is a metric space with distance 
	$$
	d(\bar u,\bar v):= \inf \{d(u,v)\::\: u \in \bar u, v \in \bar v\} =  \inf \{ d(g  \ast  v,u)\::\: g \in G\} \quad \text{for arbitrary $u \in \bar u$, $v \in \bar v$.}
	$$
	Suppose that $d(\bar u,\bar v)=0$ for some $u,v \in M$.  Then there exist $g_n \in G$, $n \in \N$ with $\|g_n \ast  u- v \|_H \to 0$. If then follows from assumption \ref{Gstar} that there exists $g \in G$ with $g \ast  u =v$, hence $\bar u = \bar v$.
	To see the triangle inequality, let $u,v,w \in M$, and let $\eps>0$. Moreover, let $g,h \in G$ with
	$$
	d(g  \ast  v,u)<   d(\bar v,\bar u) + \frac{\eps}{2} \qquad \text{and}\qquad   d(h \ast  w,v)<   d(\bar w,\bar v) + \frac{\eps}{2}
	$$
	Then we have
	$$
	d((gh) \ast  w,u) \le d((g \ast (h  \ast  w),g \ast v)+d(g \ast v,u) = d(h \ast w,v)+ d(g \ast v,u) \le d(\bar v, \bar u)+ d(\bar w,\bar v) + \eps
	$$
	and therefore
	$$
	d(\bar w,\bar u) \le d(\bar v, \bar u)+ d(\bar w,\bar v) + \eps
	$$
	Since $\eps>0$ is arbitrary, it follows that $d(\bar w,\bar u) \le d(\bar v, \bar u)+ d(\bar w,\bar v)$, so the triangle inequality holds. This finishes the proof of Claim 1.
	
	Obviously, the metric defined on $\tilde M$ has the property that the projection map
	$$
	P_c: M \to \tilde M,\qquad u \mapsto P_c(u):= \bar u
	$$
	is continuous. More precisely, if $(u_n)_n \subset K_c$ is a sequence with $d(u_n,u) \to 0$ for some $u \in K_c$, then $d(\bar u_n,\bar u) \to 0$ as $n \to \infty$. 
	
	Next, we recall that $K_c$ is a $G$-invariant set, and we set
	$$
	\tilde K_c:= P_c(K_c) \subset \tilde M.
	$$
	{\bf Claim 2:} $\tilde K_c$ is a compact subset of $\tilde M$.
	
	Indeed, let $(\bar u_n)_n$ be a sequence in $\tilde K_c$, and choose $u_n \in \bar u_n$ for every $n$. Then $(u_n)_n$ is a $(NC)_c$-sequence. By \ref{NC}, there exists $u \in K_c$ and a sequence $(g_n)_n$ in $G$ with $g_n  \ast  u_n \to u$ in $X$, which by Lemma~\ref{compatability-lipschitz} implies that 
	$$
	d(\bar u,\bar u_n) \le d(g_n  \ast  u_n, u) \to 0 \qquad \text{as $n \to \infty$.}
	$$
	
	Next we note that, since, $-\id_X$ commutes with the $G$-action (so it is $G$-equivariant), it can be defined on $\tilde M$ by
	$$
	-\bar u = \overline{-u}
	$$
	Moreover, this mapping is continuous. Indeed, let $\bar u_n \to \bar u$ in $\tilde M$. Then there exists a sequence $(g_n)_n$ in $G$ with $g_n  \ast  u_n \to u$, and therefore also
	$$
	g_n  \ast  (-(u_n)) = -(g_n \ast u_n) \to -u,
	$$
	which implies that $\overline{-u_n} \to \overline{-u}$ in $\tilde M$. 
	
	{\bf Claim 3:} $\gamma(\tilde K_c) < \infty$.
	
	We argue as in \cite[P. 96]{s}. For this we first note that $-\id$ is fixed-point free on $\tilde K_c$ by assumption $(I)_c$. Since moreover $\tilde K_c$ is compact and $-\id$ is continuous on $\tilde K_c$,   there exists $\tau>0$ with
	\begin{equation}
		\label{eq:distance-property-tau}
		\text{$d(\bar u, -\bar u)>3 \tau$ for every $\bar u \in \tilde K_c$.}   
	\end{equation}
	Consider the sets
	$$
	L({\bar u}) := \{\hat v \in \tilde K_c\::\: d(\hat v,\bar u)< \tau\},\qquad K({\bar u}):=   L({\bar u}) \cup  L({\inv(\bar u)})
	$$ 
	for $\bar u \in \tilde K_c$, which are relatively open in $\tilde K_c$. Since $\tilde K_c$ is compact and
	$$
	\tilde K_c = \bigcup_{\bar u \in \tilde K_c}K(\bar u)
	$$
	there exists $\bar {u_1},\dots,\bar {u_1} \in \tilde K_c$ with 
	$$
	\tilde K_c = \bigcup_{i=1,\dots,m}K(\bar{u_i})= \bigcup_{i=1,\dots,m}\overline{K(\bar{u_i})}
	$$
	where the closure is taken with respect to the metric $d$. Since, as a consequence of
	(\ref{eq:distance-property-tau}), we have $\gamma(\overline{K(\bar{u_i})})\le 1$ for every $i=1,\dots,m$. it follows by the subadditivity property of the genus that $\gamma(\tilde K_c)<0$, as claimed.
	
	We now complete the proof of the lemma. Since $\gamma(\tilde K_c)<0$, there exists $k \in \N$ and a continuous map
	$$
	\tilde h: \tilde K_c \to S^{k-1}
	$$
	with $\tilde h(- \bar u)= -\tilde h(\bar u)$ for all $\bar u \in \tilde K_c$. By Tietze's extension theorem, there exists a continuous extension
	$$
	h_*: \tilde M  \to \R^k
	$$
	of $h$. Replacing $h_*$ by the function $u \mapsto \frac{1}{2}\Bigl(h_*(u)-h_*(-u)\Bigr)$, we may also assume that
	$h(-u)= -h(u)$ for all $u \in \tilde M$. 
	
	Next, let $U:= h_*^{-1}(\R^k \setminus \{0\})$. Then $U$ is an open neighborhood of $\tilde K_c$. Since $\tilde K_c$ is compact, there exists $\rho_0>0$ with 
	$$
	N_\rho:= \{\bar u \in \tilde M\::\: \inf_{\bar v \in \tilde K_c}d(\bar u,\bar v)\le 2\rho \} \subset U \qquad \text{for all } \rho < \rho_0.
	$$
	Note that, by definition of the metric $d$, we have 
	$$
	P_c(A_{c,\rho}) \subset N_\rho,
	$$
	hence we may define a continuous map $h: A_{c,\rho} \to \R^k \setminus \{0\}$ by $h(u)=P_c(h_*(u))$. This map also has the property that $h(-u)=-h(u)$ for all $u \in A_{c,\rho}$. This implies that $\gamma(A_{c,\rho})< \infty$, as claimed. 
\end{proof}
Now, we prove our main multiplicity result

\begin{proof}[Proof of Theorem \ref{c-k-to-infty}]
	By assumptions (i) and (iii), we have $c_{k_*+\ell} \in (c_*,c_\infty)$ for every $\ell \in \N$. Moreover, every value $c_{k_*+\ell},$ $\ell \in \N$ is a critical point of $\Phi$ as a consequence of Proposition~\ref{c-k-critical-value}. To complete the proof, it thus remains to show that
	$$
	c := \lim_{\ell \to \infty}c_\ell = c_\infty.
	$$
	So suppose by contradiction that $c \in (c_*,c_\infty)$. According to Proposition~\ref{krasnoselski-finite}, there exists $\rho>0$ with $\gamma(A_{c,\rho})< \infty$. We may then apply the deformation lemma at $c$ to obtain $\eps>0$ such that $c - 2 \varepsilon > c_*$ and an odd continuous map $\eta:M \to M$ such that $\eta(\Phi^{c + \varepsilon} \setminus A_{c,\rho}) \subset \Phi^{c - \varepsilon}$ with $\eta(u) = u$ for $u \notin \Phi([c -2\varepsilon, c + 2 \varepsilon])$. Observe that $\gamma(\Phi^{c - \varepsilon}) < \infty$, since $c - \varepsilon < c_k$, for $k$ large enough. We now use the subadditivity and monotonicity of the genus to see that
	$$
	\gamma(\Phi^{c + \varepsilon}) \leq \gamma((\Phi^{c + \varepsilon} \setminus A_{c,\rho})) + \gamma(A_{c,\rho}) \leq \gamma(\Phi^{c - \varepsilon} ) + \gamma(A_{c,\rho}) < \infty.
	$$
	This yields a contradiction, since $c_k \leq c$ for all $k \in \N$.
\end{proof}

\subsection{A variant case}\label{sec:variant-case}
In this section we let $X$, $M$, $G$, and $\Phi$ be given as in Subsection~\ref{sec:abstr-sett-main}. We fix such a closed subgroup, and we let $X_0 \subset X$ resp. $M_0$ denote the subsets of $G_0$-invariant functions in $X$, $M$, respectively. Here and in the following, an element $u \in X$ is called $G_0$-invariant if $g \ast  u = u$ for every $g \in G_0$. It is clear that $X_0 \subset X$ are closed subspaces and therefore Hilbert spaces with norms $\|\cdot\|_H$ and $\|\cdot\|_X$. Note also that, since $G_0$ is a \text{normal} subgroup of $G$, we have
\begin{equation}
  \label{eq:G-0-invariance-X_0}
h \ast (g \ast  u) =  g \ast  u \qquad \text{for every $g \in G$, $h \in G_0$ and every $u \in X_0$,}
\end{equation}
so the action of $G$ maps $G_0$-invariant elements to $G_0$-invariant elements.

We may then apply the abstract theory of the previous section with $X$ and $M$ replaced by $X_0$ and $M_0$ and $\Phi$ replaced by the restriction of $\Phi$ to $M_0$, which we again denote by $\Phi$. 

In the following, for $c \in \R$, we redefine the sets $\Phi^c$ and $K_c$ by setting
$$
\Phi^c:= \{u \in M_0\: : \: \Phi(u) \le c\}
$$
and 
$$
K_c:= \{ u \in M_0\::\: \Phi'(u)=0,\: \Phi(u)=c\}.
$$
With these new definitions, we then consider the Ljusternik-Schnirelmann values as before by setting  
\begin{equation}\label{eq:ak}
c_k := \inf\{c > 0 : \gamma(\Phi^c) \geq k\} \quad \in [-\infty,\infty] ,\qquad \text{for $k \in \N$.} 
\end{equation}
Moreover, we redefine the sets $A_{c,\rho}$ for $\rho>0$ by setting 
$$
A_{c,\rho}:= \{v \in M_0\::\: d(u,v) < \rho \: \text{for some $u \in K_c$}\}.
$$
Moreover, we make use of the following conditions which are natural weaker replacements of the assumptions \ref{NC} and \ref{Ic} for the present context.

\begin{enumerate}[label*=$(NCG_0)_c$]
\item \label{NCG0} If $(u_n)_n \subset M_0$ is a sequence with
$\Phi(u_n) \to c$ and $\|\Phi'(u_n)\|_{u_n}(1+N(u_n))\to 0$ as $n \to \infty$, then, after passing to a subsequence, there exists a sequence $(g_n)_n$ in $G$ and $u \in X_0$ with $g_n  \ast  u_n \to u$ in $X$ as $n \to \infty$.
\end{enumerate}

\begin{enumerate}[label*=$(I)_c'$]
\item\label{Ic'} For all $u \in K_c$ and $g \in G \setminus G_0$ we have $g \ast  u \neq -u$ .     
\end{enumerate}

Note that \ref{Ic'} could be equivalently formulated by requiring that $g \ast  u \neq -u$ for all $u \in K_c$ and $g \in G$, since for $u \in K_c$ we have $g \ast  u =u$ for all $g \in G_0$ and therefore 
$$
g  \ast  u \neq -u \quad \text{for all $u \in K_c$, $g \in G_0$} \quad \text{iff}\quad u \neq 0,
$$
which holds, since we assume $M \subset X \setminus\{0\}$. We prefer to state \ref{Ic'} as above to emphasize the role of the subgroup $G_0$. An application of Theorem~\ref{c-k-to-infty} with the replacements above then yields the following statement. 

\begin{theorem}
  \label{c-k-to-infty-variant}
	Suppose that $G$ satisfies \ref{Gstar}, and that there are values $c_*, c_\infty \in \R \cup \{+\infty\}$, $c_* < c_\infty$ with the following properties:
\begin{itemize}
		\item[(i)] The metric space $(\Phi^{-1}([c_*,c]),d)$ is complete for all $c < c_\infty$.
		\item[(ii)] $k_*:= \lim \limits_{\stackrel{c \to c_*}{c>c_*}}\gamma(\Phi^c)< \infty$.
		\item[(iii)] $\lim \limits_{\stackrel{c \to c_\infty}{c<c_\infty}}\gamma(\Phi^c)=\infty$. 
	\item[(iv)] The conditions \ref{NCG0} and \ref{Ic'} hold at every level $c \in (c_*,c_\infty)$.
\end{itemize}
Then we have
$$
c_* < c_{k_*+1} \le c_{k_*+2} \le c_{k*+\ell} \to c_\infty \quad \text{as $\ell \to \infty$,}
$$
and these values are critical points of the functional $\Phi$. Hence there exists a sequence of pairs of critical points $\{\pm u_k\}$ of $\Phi$ with $\Phi(u_k) \to c_\infty$ as $k \to \infty$.
\end{theorem}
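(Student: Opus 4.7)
The strategy is to deduce Theorem~\ref{c-k-to-infty-variant} as a direct application of Theorem~\ref{c-k-to-infty} to the restricted framework obtained by replacing $X$, $M$, and $\Phi$ with $X_0$, $M_0$, and $\Phi|_{M_0}$, while keeping the same group $G$ (which acts on $X_0$ in view of \eqref{eq:G-0-invariance-X_0}).

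First I would verify that the abstract structure of Subsection~\ref{sec:abstr-sett-main} transfers coherently. The subspace $X_0$ is closed in $X$ (as the intersection of the kernels of the bounded linear maps $u \mapsto u - g \ast u$, $g \in G_0$) and hence a Hilbert space. The scalar products $\langle \cdot, \cdot \rangle_u$ for $u \in M_0$ restrict to $T_u M_0 \times T_u M_0$, and the properties \ref{M1} and \ref{M4} are inherited unchanged, while \ref{M3} holds for the $G$-action on $X_0$ guaranteed by the normality of $G_0$. The key compatibility to check is that for $u \in M_0$ the restricted gradient coincides with $\nabla_u \Phi(u)$ and in particular lies in $T_u M_0$. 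This follows because $G_0$-invariance of $\Phi$ together with \ref{M3} gives, for every $h \in G_0$ and $v \in T_u M$,
\[
\langle h \ast \nabla_u \Phi(u), v \rangle_u = \langle \nabla_u \Phi(u), h^{-1} \ast v \rangle_u = \Phi'(u)(h^{-1} \ast v) = \Phi'(u) v = \langle \nabla_u \Phi(u), v \rangle_u,
\]
using $h \ast u = u$. Hence $h \ast \nabla_u \Phi(u) = \nabla_u \Phi(u)$, so $\nabla_u \Phi(u) \in X_0 \cap T_u M = T_u M_0$. Consequently the negative gradient flow underlying Lemma~\ref{deformation-lemma-cerami} preserves $M_0$, and the Lipschitz results of Section~\ref{sec:some-lipsch-prop} apply verbatim.

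Next I would check that the hypotheses of Theorem~\ref{c-k-to-infty} are satisfied in the restricted framework. Assumption \ref{Gstar} is unchanged. For the restricted version of \ref{NC}: given an $N$-Cerami sequence $(u_n)_n \subset M_0$ at a level $c \in (c_*, c_\infty)$, the hypothesis \ref{NCG0} furnishes a sequence $(g_n)_n \subset G$ and an element $u \in X_0$ with $g_n \ast u_n \to u$ in $X$; since each $g_n \ast u_n$ lies in $M$ (by $G$-invariance of $M$) and in the energy strip $\Phi^{-1}([c_*,c+1]) \subset M$ whose $d$-completeness is guaranteed by (i), the limit $u$ inherits membership in $M$ and hence in $M_0 = M \cap X_0$, which is precisely the conclusion of \ref{NC} for the restricted functional. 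For \ref{Ic}: the equality $g \ast u = -u$ with $u \in K_c$ and $g \in G_0$ reduces to $u = -u$, impossible since $K_c \subset M_0 \subset X \setminus \{0\}$; the case $g \in G \setminus G_0$ is excluded by \ref{Ic'}. The completeness and genus assumptions (i)--(iii) are word-for-word those required by Theorem~\ref{c-k-to-infty}.

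Applying Theorem~\ref{c-k-to-infty} (or equivalently rerunning its proof via Proposition~\ref{c-k-critical-value} and Lemma~\ref{krasnoselski-finite}) to the data $(X_0, M_0, G, \Phi|_{M_0})$ then yields the sequence of Ljusternik--Schnirelmann values $c_{k_*+\ell} \to c_\infty$ and the corresponding pairs $\pm u_k$ of critical points. The main technical subtlety is ensuring tangency of the $u$-gradient to $M_0$ (not merely to $M$), because this is what keeps the deformation flow of Lemma~\ref{deformation-lemma-cerami} inside $M_0$; as shown above, this tangency is essentially automatic from the $G_0$-equivariance that \ref{M3} provides, so the reduction goes through with no substantive new obstacle.
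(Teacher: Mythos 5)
Your proposal matches the paper's own approach: the paper proves Theorem~\ref{c-k-to-infty-variant} by a one-line appeal to Theorem~\ref{c-k-to-infty} ``with $X$ and $M$ replaced by $X_0$ and $M_0$ and $\Phi$ replaced by the restriction of $\Phi$ to $M_0$,'' and you are filling in the details that this appeal glosses over. Your verification that \ref{M1}, \ref{M4}, \ref{M3} transfer to $X_0$, that \ref{Ic'} implies the restricted \ref{Ic} (using $u\ne 0$ for $g\in G_0$), and, most usefully, the tangency argument showing $\nabla_u\Phi(u)\in T_uM_0$ for $u\in M_0$, are all correct and genuinely clarify why the restriction is legitimate (the last point is essentially Lemma~\ref{lemma:invariant} specialized to $g=h\in G_0$, but it is worth making explicit, since it is what keeps the deformation flow inside $M_0$).

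One step does not quite go through as written, though. The condition \ref{NCG0} produces a limit $u\in X_0$, whereas the restricted \ref{NC} requires $u\in M_0=M\cap X_0$. You try to deduce $u\in M$ from the $d$-completeness of the energy strip, but this does not follow: the sequence $v_n:=g_n\ast u_n$ converges in the $X$-norm, and $X$-Cauchy does not imply $d$-Cauchy (Lemma~\ref{compatability-lipschitz} gives only \emph{local} bilipschitz equivalence, around points already known to lie in $M$; the lower bound $d\ge c\,\|\cdot\|_X$ near the boundary of $M$ can degenerate). So completeness of $(\Phi^{-1}([c_*,c]),d)$ alone does not force the $X$-limit of a sequence in $M_0$ to lie in $M_0$. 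This is, however, more a wrinkle in the abstract formulation of \ref{NCG0} than a flaw in your reasoning: for the deformation argument to run (one needs $u\in K_c\subset M_0$, and $\nabla_u\Phi(u)$ is only defined for $u\in M$), the condition should really be read as producing $u\in M_0$, and in the concrete verification the paper does establish exactly that — Proposition~\ref{prop:PSC23} shows $u\in\cN\cap X_0$ and then $u\in\cN_-$ since $\cN_-\cup\cN_0$ is closed in $X\setminus\{0\}$ and $\Phi\equiv 0$ on $\cN_0$. So rather than deriving $u\in M_0$ from completeness, you should either read \ref{NCG0} with the conclusion strengthened to $u\in M_0$, or note explicitly that this strengthening is what is actually supplied in the application.

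Aside from that point, your reduction is the same as the paper's and the supporting details you supply are correct.
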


\section{A $G$-equivariant functional-analytic setting for the logarithmic Choquard equation}
\label{sec:preliminaries}
In the following sections, we wish to apply the abstract theory developed in Section \ref{sec:LS theory} to obtain solutions to the Choquard equation (\ref{eq:choquard_equation}). In the following, $|u|_p$ stands for the
$L^p$-norm of a function $u \in L^p(\R^2)$, $1 \le p \le \infty$. Let $H:= H^{1}(\mathbb{R}^{2})$ be the usual Sobolev space endowed with
the standard inner product
\begin{equation*}
    \langle u, v\rangle_H =\int_{\mathbb{R}^{2}} \left(\nabla u \cdot \nabla v
     + uv\right)dx   \quad \text{for}\ u, v \in H^{1}(\mathbb{R}^{2}),
\end{equation*}
and the induced norm denoted by $\|u\|_H=\langle u, u\rangle_H^{1/2}$.
Moreover, for any measurable function
$u: \mathbb{R}^{2}\rightarrow \mathbb{R}$, we define
\begin{equation*}
    |u|_{*} := \left( \int_{\mathbb{R}^{2}} \log\left(1+|x|\right)u^{2}
     dx\right)^{1/2}\in [0, \infty].
\end{equation*}
We also define the Hilbert space
\begin{equation}
\label{definition-X}
    X:=\left\{u \in H^{1}(\mathbb{R}^{2}): |u|_{*}< \infty\right\}
\end{equation}
\vskip -0.1 true cm\noindent
with the scalar product given by
$$ \langle u,v \rangle_X := \langle u,v \rangle_H + \langle u,v\rangle_* \qquad \langle u,v\rangle_* := \int_{\R^2} \log(1+|x|)u(x)v(x) dx ,$$
and the induced norm $\|u\|_{X}:= \sqrt{\|u\|^{2}+|u|^{2}_{*}}$. 

As mentioned in the introduction, the space $(X,\|\cdot\|_X)$ can be used to set up a variational formulation of the logarithmic Choquard equation~(\ref{eq:choquard_equation}), but the norm of $X$ is not translation invariant. In this section, we wish to construct a family of scalar products $\langle \cdot,\cdot \rangle_u$, $u \in X$ which satisfies the abstract assumptions of Section~\ref{sec:LS theory}. 
The main tool for this is a generalized barycenter map as constructed in \cite{cp,bw}. We now recall the construction of a generalized barycenter and show it gives rise to a Lipschitz continuous map.   

\subsection{A Lipschitz continuous generalized barycenter map}\label{subsec:barycenter}

Let $E(N)$ denote the group of euclidean motions on $\R^N$, i.e., the transformation group generated by rotations and translations in $\R^N$. For $p \ge 1$, the standard orthogonal action of $E(N)$ on $L^p(\R^N)$ is given by
$$
[g \ast  u](x) = u (g^{-1}x) \qquad \text{for every $u \in L^p(\R^N)$, $g \in E(N)$ and a.e. $x \in \R^N$.}
$$
We call a map \( \beta : L^p(\mathbb{R}^N) \setminus \{0\} \to \mathbb{R}^N \) a generalized barycenter map on \( L^p(\mathbb{R}^N) \) if it is continuous and equivariant with respect to this action of $E(N)$, i.e.,  we require that
\begin{equation}
  \label{eq:defining-prop-barycenter}
\beta(g  \ast  u) = g(\beta(u)) \ \text{for \( g \in E(N) \), \( u \in L^p(\mathbb{R}^N) \setminus \{0\} \)}.
\end{equation}
Maps with these properties have been constructed in \cite{bw,cp}. Property \eqref{eq:defining-prop-barycenter} yields
\[
\beta(u(\cdot - b)) = \beta(u) + b, \quad \text{and} \quad \beta(u \circ A^{-1}) = A \beta(u) \qquad \text{for $b \in \mathbb{R}^N$ and $A \in O(N)$.} 
\]
As a consequence, an even function, especially a radial function \( u \in L^p(\mathbb{R}^N) \setminus \{0\} \), has barycenter \( \beta(u) = 0 \). More generally, if \( u \in L^p(\mathbb{R}^N) \setminus \{0\} \) is invariant with respect to a subgroup \( G \subset O(N) \), then \( \beta(u) \in (\mathbb{R}^N)^G = \{ x \in \mathbb{R}^N \mid gx = x \, \text{for all } g \in G \} \).

We will prove now that the barycenter map constructed in \cite[Theorem 2.1]{bw} is locally Lipschitz continuous.

\begin{theorem}\label{thm:barycenter}
	For any \( p \in [1, \infty) \), there exists a generalized barycenter map \( \beta = \beta_p : L^p(\mathbb{R}^N) \setminus \{0\} \to \mathbb{R}^N \) which is locally Lipschitz continuous and satisfies \( \beta(|u|) = \beta(u) \).
\end{theorem}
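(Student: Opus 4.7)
\smallskip

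\noindent\textbf{Proof proposal for Theorem \ref{thm:barycenter}.} The plan is to recall the concrete construction of the barycenter from \cite{bw} and then show that each step is locally Lipschitz on $L^p(\R^N) \setminus \{0\}$. Given $u \in L^p(\R^N) \setminus \{0\}$, set
$$
\hat u(x) := \frac{1}{|B_1|}\int_{B_1(x)}|u(y)|\,dy, \qquad m(u):=\|\hat u\|_\infty, \qquad u^*(x) := \Bigl(\hat u(x)-\tfrac{1}{2}m(u)\Bigr)^+,
$$
and finally
$$
\beta(u) := \frac{\int_{\R^N} x\,u^*(x)\,dx}{\int_{\R^N} u^*(x)\,dx}.
$$
One first checks that $u^*$ is a nonnegative continuous function with compact support: indeed $\hat u$ is uniformly continuous (convolution with $\chi_{B_1}$), strictly positive at some point because $u\not\equiv 0$, and tends to zero at infinity (by H\"older's inequality for $p>1$, by absolute continuity of the integral for $p=1$). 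The denominator is thus positive and the numerator well defined. The equivariance~\eqref{eq:defining-prop-barycenter} follows because $u\mapsto \hat u$ commutes with Euclidean motions (since $\chi_{B_1}$ is radial) and the normalized first moment is equivariant under $E(N)$. The identity $\beta(|u|)=\beta(u)$ is immediate because the whole construction only sees $|u|$.

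For the Lipschitz property, fix $u_0\in L^p(\R^N)\setminus\{0\}$ and proceed step by step. The map $u\mapsto |u|$ is $1$-Lipschitz in $L^p$, and the map $u\mapsto \hat u$ is globally Lipschitz from $L^p$ to $L^\infty$ via H\"older:
$$
\|\hat u - \hat v\|_\infty \le \frac{1}{|B_1|^{1/p}}\,\|u-v\|_p \qquad \text{for all } u,v\in L^p(\R^N).
$$
Consequently $u\mapsto m(u)$ is Lipschitz, and since $t\mapsto t^+$ is $1$-Lipschitz the map $u\mapsto u^*$ is Lipschitz from $L^p$ to $L^\infty$.

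The decisive observation is that the supports of $u^*$ stay inside a fixed ball as $u$ varies in a small $L^p$-neighborhood of $u_0$. By the vanishing of $\hat{u_0}$ at infinity one picks $R>0$ with $\{\hat{u_0}\ge \frac14 m(u_0)\}\subset B_R(0)$. Choosing $\delta>0$ smaller than $\frac{1}{8}m(u_0)$ and $u$ with $\|u-u_0\|_p$ so small that $\|\hat u-\hat{u_0}\|_\infty<\delta$, one obtains $m(u)\ge m(u_0)-\delta$, hence for $|x|\ge R$,
$$
\hat u(x) \le \hat{u_0}(x)+\delta < \tfrac14 m(u_0)+\delta < \tfrac12\bigl(m(u_0)-\delta\bigr)\le \tfrac12 m(u).
$$
Thus $\mathrm{supp}\,u^* \subset \overline{B_R(0)}$ uniformly in such neighborhoods. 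It follows that the numerator of $\beta(u)$ is Lipschitz,
$$
\Bigl|\int x\,u^*\,dx - \int x\,v^*\,dx\Bigr| \le R\,|B_R|\,\|u^*-v^*\|_\infty \le C_1(u_0)\,\|u-v\|_p,
$$
and similarly the denominator $\int u^*\,dx$ is Lipschitz in $u$. Moreover the denominator is bounded below by some $c(u_0)>0$ on the neighborhood, since it depends continuously on $u$ and is strictly positive at $u_0$. The quotient rule then yields local Lipschitz continuity of $\beta$ near $u_0$.

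\smallskip

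\noindent\textbf{Main obstacle.} The only nontrivial ingredient is the uniform compactness of $\mathrm{supp}\,u^*$ on a neighborhood of $u_0$. A bound depending only on $\|u\|_p$ would not suffice for a Lipschitz estimate; what saves the argument is the subtraction of half the maximum and the fact that $L^\infty$-closeness of $\hat u$ to the fixed $\hat{u_0}$ forces the super-level set $\{\hat u \ge m(u)/2\}$ into a fixed ball determined by $u_0$. This renders the quotient defining $\beta$ a ratio of a Lipschitz numerator by a strictly positive Lipschitz denominator, which is exactly what is needed for local but not global Lipschitz continuity.
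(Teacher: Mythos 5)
Your proposal is correct and follows the same overall scheme as the paper's proof: mollify $|u|$ against $\chi_{B_1}$, subtract half the supremum and take the positive part to get a compactly supported continuous density, then take its normalized first moment. The decisive observation that $\mathrm{supp}\,u^*$ is trapped in a fixed ball uniformly on an $L^p$-neighborhood of $u_0$ (because $L^\infty$-closeness of $\hat u$ to $\hat{u_0}$ forces the level set $\{\hat u\ge m(u)/2\}$ into the compact set $\{\hat{u_0}\ge m(u_0)/4\}$) is exactly the paper's key step; the paper introduces the same auxiliary set $K=\{\hat{u_0}\ge |\hat{u_0}|_\infty/4\}$.

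The one genuine deviation is the choice of mollified density. The paper, faithful to the construction in \cite{bw}, takes $\hat u(x)=\int_{B_1(x)}|u|^p$, so that the map $u\mapsto\hat u$ from $L^p$ to $L^\infty$ is only \emph{locally} Lipschitz, via the pointwise inequality $\bigl||u|^p-|v|^p\bigr|\le p\,|u-v|(|u|+|v|)^{p-1}$ and H\"older, with a constant $\sim p\,2^{p-1}3^{(p-1)/p}|u|_p^{p-1}$. You instead take $\hat u(x)=\frac{1}{|B_1|}\int_{B_1(x)}|u|$, for which a single H\"older estimate gives a \emph{global} Lipschitz bound $\|\hat u-\hat v\|_\infty\le|B_1|^{-1/p}\|u-v\|_p$ with a universal constant. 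This is a small but genuine simplification: it makes the first step of the chain globally Lipschitz and isolates the local nature of the final estimate to the two places where it is truly needed (uniform compact support and the lower bound on the denominator). Both choices yield a valid generalized barycenter satisfying \eqref{eq:defining-prop-barycenter} and $\beta(|u|)=\beta(u)$, and both verifications of equivariance and vanishing at infinity are standard, so your argument is a complete proof of the theorem.
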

\begin{proof}
	For all $u \in L^p(\R^N)$ we define
	$$ \hat{u} : \R^N \to \R \quad \hat{u}(x) := \int_{B_1(x)} |u(y)|^p dy. $$
	Now, we fix $p \in [1,\infty)$ and $u \in L^p(\R^N)$ and observe that 
	\begin{align*}
		|\hat{u}(x) - \hat{v}(x)| &\leq \int_{B_1(x)} \left| |u|^p - |v|^p \right| \leq p \int_{B_1(x)} |u-v|(|u|+|v|)^{p-1} \\
		&\leq p \left( \int_{B_1(x)}|u-v|^p \right)^{1/p} \left(\int_{B_1(x)}(|u| + |v|)^p  \right)^{{(p-1)}/p }\\
		&\leq p2^{p-1}|u-v|_p(|u|_p^p + |v|_p^p)^{(p-1)/p} \leq p 2^{p-1}3^{(p-1)/p}|u|_p^{p-1}|u-v|_p,
	\end{align*}
	for all $v \in B_\delta(u) \subset L^p(\R^N)$, where $\delta> 0$ is chosen such that $|v|_p^p < 2|u|_p^p$ in $B_\delta(u)$. Thus
	\begin{equation}\label{eq:infinitypbound}
		|\hat{u} - \hat{v}|_\infty \leq  p 2^{p-1}3^{(p-1)/p}|u|_p^{p-1}|u-v|_p =: c(u)|u-v|_p
	\end{equation}
	for all $v \in B_\delta(u)$.
	
	For \( v \in L^p(\R^N) \), we consider the set
	\[
	\Omega(v) = \left\{ x \in \mathbb{R}^N : \hat{v}(x) > \frac{\|\hat{v}\|_\infty}{2} \right\}.
	\]
	If \( v \neq 0 \), then \( \Omega(v) \subset \mathbb{R}^N \) is compact and has nonempty interior. Moreover,
	\[
	\beta_1(v) := \int_{\Omega(v)} \left( \hat{v}(x) - \frac{\|\hat{v}\|_\infty}{2} \right) dx > 0 \quad \text{for every } v \in L^p(\R^N) \setminus \{0\}
	\]
	Therefore, setting
	\[
	\beta_0(v) := \int_{\Omega(v)} x \left( \hat{v}(x) - \frac{\|\hat{v}\|_\infty}{2} \right) dx \in \mathbb{R}^N,
	\]
	the function 
	\[ \beta : L^p(\R^N) \setminus \{0\} \to \mathbb{R}^N \quad \beta(v) = \frac{\beta_0(v)}{\beta_1(v)}, \] is well defined and satisfies $\beta(|u|) = \beta(u)$.
	
	Now, we prove that $\beta_0$ is locally Lipschitz. We set
	$$ K := \left\{ x \in \R^N : \hat{u}(x) \geq \frac{|\hat{u}|_\infty}{4} \right\}.$$
	Observe that $K$ is a compact set containing $\Omega(u)$. Moreover, from the bound \eqref{eq:infinitypbound}, we can adjust $\delta$ so that
	$$ \Omega(v) \subset K \quad \text{for all } v \in B_\delta(u). $$
	
	It follows that
	\begin{align*} 
		|\beta_0(u) - \beta_0(v)|_2 &\leq \int_K \left| \left( x \left( \hat{u}(x) - \frac{\|\hat{u}\|_\infty}{2} \right)  \right)^+ - \left( x \left( \hat{v}(x) - \frac{\|\hat{v}\|_\infty}{2} \right)  \right)^+ \right| \\
		&\leq \int_K |x(\hat{u}(x) - \hat{v}(x))| + \int_K |x| \left| \frac{|\hat{u}|_\infty}{2} - \frac{| \hat{v}|_\infty}{2} \right| \\
		&\leq \frac{3M|K|}{2}  |\hat{u} - \hat{v}|_\infty  \leq \frac{3M|K|}{2}c(u) |u-v|_p \qquad \text{for all } u \in B_\delta(u),
	\end{align*}
	where $M$ is a bound for the compact set $K$. In a similar way, we can show that $\beta_1$ is locally Lipschitz. Then, since $\beta_1$ is continuous and $\beta_1(u) > 0$, it is locally bounded away from $0$. From this it follows that $\beta = \beta_0/\beta_1$ is locally Lipschitz continuous.
\end{proof}

\begin{remark}
  \label{homogeneity-barycenter}
  An inspection of the construction above shows the homogeneity property
  $$
  \beta(tu) = \beta(u)= \beta(|u|) \qquad \text{for all $u \in L^p(\R^N) \setminus \{0\}$, $t \in \R \setminus \{0\}$.}
  $$
\end{remark}

\subsection{An equivariant metric}\label{subsec:metric}

With the aid of the generalized barycenter map $\beta : L^2(\mathbb{R}^2) \setminus \{0\} \to \mathbb{R}^2$
discussed in the previous section, we now define a family of scalar products $\langle \cdot, \cdot \rangle_{u}$, $u \in L^2(\R^2) \setminus \{0\}$. As before, we let $E(2)$ denote the group of euclidean motions in $\R^2$, and we note that every element $g \in E(2)$ can be written in the form $g=g_{A,b}$ with 
$$
g_{A,b}x = A x+b \qquad \text{for some $A \in O(2)$, $b \in \R^2$.}
$$
If the context allows it, we will write $g_{b} = g_{id,b}$ and $g_{A} = g_{A,0}$ for short. Clearly, $b \in \R^2$ is uniquely given by $b = g(0)$, and $A$ is uniquely given by $Ax = g(x)-b$. We also note that
$g_{A,b}^{-1}y = A^{-1}(y-b)$ for $y \in \R^2$, and hence
$$
g_{A,b}^{-1} = g_{A^{-1},-A^{-1}b}
$$
With this notation, we now define\footnote{We deliberately defined $\langle \cdot,\cdot \rangle_u$ for general $u \in L^2(\R^2) \setminus \{0\}$ and not only for $u \in X \setminus \{0\}$. The continuous dependence of these scalar products with respect to $|u|_2$ will turn out to be useful.}  
\begin{equation}
  \label{eq:defition-g-norm}
\langle v,w \rangle_u := \langle g_{\beta(u)}^{-1} \ast v , g_{\beta(u)}^{-1} \ast w \rangle_X = \langle v(\cdot + \beta(u)) ,  w(\cdot + \beta(u)) \rangle_X  
\end{equation}
for $u \in L^2(\R^2) \setminus \{0\}$, $v,w \in X$. Our aim is to show that the family of scalar products $\langle \cdot,\cdot \rangle_u$, $u \in X$ satisfies the abstract assumptions in Section~\ref{sec:LS theory}. We first collect some basic observations. First, we note that
\begin{equation}
  \label{eq:homogeneity-scalar-prduct-family}
\langle \cdot, \cdot \rangle_{tu} = \langle \cdot, \cdot \rangle_{u}= \langle \cdot, \cdot \rangle_{|u|} \qquad \text{for all $u \in L^2(\R^N) \setminus \{0\}$, $t \in \R \setminus \{0\}$}
\end{equation}
by Remark~\ref{homogeneity-barycenter}. Moreover, we note that for the family of induced norms $\|\cdot\|_u$, $u \in X$ we have the uniform lower bounds
\begin{equation}
  \label{eq:uniform-lower-bounds}
\|v\|_u \ge \|v\|_H \ge |v|_2 \qquad \text{for all $u \in L^2(\R^2)$, $v \in X$,}  
\end{equation}
which follow from (\ref{eq:defition-g-norm}) and the definition of $\|\cdot\|_X$. Next we note the following.

\begin{lemma}
\label{M3-satisfied-lemma-0}
We have 
\begin{equation}
\label{M3-satisfied-lemma-sufficient}
\langle g \ast v, g \ast w \rangle_{g \ast u} = \langle v, w \rangle_u \quad \text{for all $g \in E(2)$, $v,w \in X$ and $u \in L^2(\R^2) \setminus \{0\}$.}
\end{equation}
\end{lemma}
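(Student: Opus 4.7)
The plan is to reduce the $E(2)$-invariance identity to the $O(2)$-invariance of the fixed scalar product $\langle \cdot,\cdot\rangle_X$, using the equivariance of the barycenter to absorb the translational part of $g$. The key structural identity will be that left-composing $g$ with the translation $g_{\beta(g\ast u)}^{-1}$ produces an element of $O(2)$ (no translation).

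More precisely, I would first write an arbitrary $g \in E(2)$ in the form $g = g_{A,b}$ with $A \in O(2)$ and $b \in \R^2$, and use the equivariance property \eqref{eq:defining-prop-barycenter} of $\beta$ to write $\beta(g\ast u) = A\beta(u) + b$. A direct computation on points of $\R^2$ then yields the identity
\begin{equation*}
g_{\beta(g\ast u)}^{-1} \circ g_{A,b} \;=\; g_A \circ g_{\beta(u)}^{-1},
\end{equation*}
since both sides send $x$ to $A(x-\beta(u))$. Applied to $v \in X$ via the action, this gives $g_{\beta(g\ast u)}^{-1}\ast(g\ast v) = g_A \ast (g_{\beta(u)}^{-1}\ast v)$, and similarly for $w$.

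Substituting into the definition \eqref{eq:defition-g-norm} yields
\begin{equation*}
\langle g\ast v, g\ast w\rangle_{g\ast u}
 = \bigl\langle g_A \ast (g_{\beta(u)}^{-1}\ast v),\; g_A \ast (g_{\beta(u)}^{-1}\ast w)\bigr\rangle_X,
\end{equation*}
so everything reduces to the invariance of $\langle\cdot,\cdot\rangle_X$ under the orthogonal action $v \mapsto g_A\ast v$. This last fact is routine: a linear change of variables $y = A^{-1}x$ (with $|\det A|=1$) together with the identity $|\nabla(v\circ A^{-1})(x)|^2 = |(\nabla v)(A^{-1}x)|^2$ (valid since $A$ is orthogonal) shows $\|g_A\ast v\|_H = \|v\|_H$, and the same change of variables combined with $|Ay|=|y|$ shows $|g_A\ast v|_* = |v|_*$; polarizing gives the invariance of the full scalar product.

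The only potentially delicate point is keeping track of conventions in the composition identity in Step 2, since one can easily flip a sign by confusing the action $g\ast u = u\circ g^{-1}$ with $u\circ g$; I expect that once this is done carefully, the rest is essentially a one-line consequence of the $O(2)$-invariance. In particular, the $E(2)$-invariance only uses that the weight $\log(1+|x|)$ in $|\cdot|_*$ is radial, which is precisely what allows the translational part $b$ of $g$ to be shifted out by the barycenter.
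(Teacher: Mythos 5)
Your proposal is correct and is essentially the same argument as the paper's: both reduce the claim to the $O(2)$-invariance of $\langle\cdot,\cdot\rangle_X$ by using the barycenter equivariance $\beta(g\ast u)=g(\beta(u))$ to absorb the translational part of $g$. The only presentational difference is that you isolate the clean composition identity $g_{\beta(g\ast u)}^{-1}\circ g_{A,b}=g_A\circ g_{\beta(u)}^{-1}$ as a separate step and then invoke $(h_1h_2)\ast v=h_1\ast(h_2\ast v)$, whereas the paper carries out the same computation directly on the arguments of $v$ and $w$; you are also slightly more explicit in justifying the $O(2)$-invariance of $\|\cdot\|_H$ and $|\cdot|_*$, which the paper takes for granted.
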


\begin{proof}
Let $g=g_{A,b} \in E(2)$ with $A \in O(2)$, $b \in \R^2$. Moreover, let $v,w \in X$ and $u \in X \setminus \{0\}$. With
  \begin{equation}
    \label{eq:beta-beta-prime-relation}
\beta := \beta(u)\quad \text{and}\quad \beta':= \beta(g \ast u)= g(\beta(u))= g(\beta),
  \end{equation}
we then have 
$$
\langle v, w \rangle_u = \langle g_{\beta}^{-1} \ast v, g_{\beta}^{-1} \ast w \rangle_{X}= \langle v(\cdot + \beta), w(\cdot + \beta) \rangle
$$
and
\begin{align*}
  \langle g \ast v, g \ast w \rangle_{g \ast u} &= \langle (g_{\beta'}^{-1} g) \ast v, (g_{\beta'}^{-1} g) \ast w \rangle_{X}=\langle v (g^{-1}(g_{\beta'}(\cdot))),wg^{-1}(g_{\beta'}(\cdot)))\rangle_{X}  \\
                                                &= \langle v (g^{-1}(\cdot + \beta'),w(g^{-1}(\cdot + \beta')\rangle_{X}= \langle v (A^{-1}(\cdot + \beta'-b),w(A^{-1}(\cdot + \beta'-b)\rangle_{X}\\
                                                &= \langle v (A^{-1}(\cdot) +g^{-1}(\beta')),w(A^{-1}(\cdot) +g^{-1}(\beta'))\rangle_{X}\\
                                                &= \langle v (\cdot +g^{-1}(\beta')),w(\cdot +g^{-1}(\beta'))\rangle_{X},
\end{align*}
where we used the rotational invariance (i.e., the $O(N)$-invariance) of $\langle \cdot, \cdot \rangle_X$ in the last step.
Since $g^{-1}(\beta') = \beta$ by (\ref{eq:beta-beta-prime-relation}), we thus obtain (\ref{M3-satisfied-lemma-sufficient}), as required. 
\end{proof}

The next lemma implies that the family $\langle \cdot,\cdot \rangle_u$, $u \in X \setminus \{0\}$ satisfies \ref{M1} and \ref{M4}.

\begin{lemma}\label{lemma:scalarproduct}
  For every $u \in L^2(\R^2) \setminus \{0\}$, there exist a neighborhood $U \subset L^2(\R^2)$ of $u$ and a constant $C>0$ such that for all $u_1, u_2 \in U$ and $v,w\in X$ we have
  \begin{equation}
    \label{eq:M1-check}
\frac{1}{C} \| v \|_{X} \leq \|v\|_{u_1} \le C \|v\|_X 
\end{equation}
and
  \begin{equation}
    \label{eq:M4-check}
    \left|  \langle v,w \rangle_{u_1} -  \langle v,w \rangle_{u_2} \right| \leq C\|u_1 - u_2\|_X \int_{\R^2}|v(y)w(y)|dy.
  \end{equation}  
\end{lemma}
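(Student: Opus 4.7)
The plan is to exploit the explicit form of $\langle v, w\rangle_u$ obtained by translating via the barycenter, combined with two elementary inequalities for the logarithm and the local Lipschitz continuity of the barycenter established in Theorem~\ref{thm:barycenter}.

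First, I will rewrite the scalar product explicitly. Using that $\langle \cdot, \cdot \rangle_H$ is translation invariant together with the change of variables $y = x+\beta(u)$ in the weighted part, a direct computation from the definition \eqref{eq:defition-g-norm} gives
\begin{equation*}
\langle v, w \rangle_u = \langle v, w \rangle_H + \int_{\R^2} \log\bigl(1+|y-\beta(u)|\bigr)\, v(y) w(y)\, dy.
\end{equation*}
In particular, the only dependence on $u$ enters through the translation $\beta(u)$ inside the logarithm.

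Second, for the norm equivalence \eqref{eq:M1-check} I will use the elementary bound
\begin{equation*}
\bigl|\log(1+|y-\beta|) - \log(1+|y|)\bigr| \le \log(1+|\beta|),
\end{equation*}
which follows from the two-sided inequality $1+|y-\beta| \le (1+|y|)(1+|\beta|)$ and its symmetric version (swapping $y$ and $y-\beta$). Together with the identity above this yields
\begin{equation*}
\bigl|\|v\|_{u_1}^2 - \|v\|_X^2\bigr| \le \log\bigl(1+|\beta(u_1)|\bigr)\, |v|_2^2 \le \log\bigl(1+|\beta(u_1)|\bigr)\, \|v\|_X^2.
\end{equation*}
Since $\beta$ is continuous on $L^2(\R^2)\setminus\{0\}$ by Theorem~\ref{thm:barycenter}, $|\beta(\cdot)|$ is bounded on a suitable $L^2$-neighborhood $U$ of $u$, which yields the two-sided bound in \eqref{eq:M1-check} with a uniform constant $C=C(u)$.

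Third, for the Lipschitz estimate \eqref{eq:M4-check} I will use the mean value theorem together with the reverse triangle inequality $\bigl|\,|y-\beta_1|-|y-\beta_2|\,\bigr| \le |\beta_1-\beta_2|$ and the fact that $t\mapsto\log(1+t)$ is $1$-Lipschitz on $[0,\infty)$, to obtain the pointwise bound
\begin{equation*}
\bigl|\log(1+|y-\beta_1|) - \log(1+|y-\beta_2|)\bigr| \le |\beta_1-\beta_2|.
\end{equation*}
Setting $\beta_i:=\beta(u_i)$ and integrating yields
\begin{equation*}
\bigl|\langle v,w \rangle_{u_1} - \langle v,w \rangle_{u_2}\bigr| \le |\beta(u_1)-\beta(u_2)| \int_{\R^2} |v(y)w(y)|\, dy.
\end{equation*}
By Theorem~\ref{thm:barycenter} applied with $p=2$, the barycenter $\beta$ is locally Lipschitz on $L^2(\R^2)\setminus\{0\}$, so after shrinking $U$ if necessary we have $|\beta(u_1)-\beta(u_2)| \le L\, |u_1-u_2|_2 \le L\, \|u_1-u_2\|_X$ for all $u_1,u_2\in U$, which delivers \eqref{eq:M4-check}.

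There is no real obstacle beyond bookkeeping: the translation invariance of $\langle\cdot,\cdot\rangle_H$ reduces everything to the weighted $L^2$ term, and the two logarithm inequalities above (submultiplicativity of $1+|\cdot|$ and $1$-Lipschitzness of $\log(1+\cdot)$) together with the local Lipschitz barycenter yield both estimates on the same neighborhood.
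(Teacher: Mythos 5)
Your proof of \eqref{eq:M4-check} is essentially the paper's argument: both use the $1$-Lipschitzness of $t \mapsto \log(1+t)$ together with the reverse triangle inequality to obtain the pointwise bound $|\log(1+|y-\beta_1|) - \log(1+|y-\beta_2|)| \le |\beta_1-\beta_2|$, then invoke the local Lipschitz continuity of $\beta$ from Theorem~\ref{thm:barycenter}.

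For \eqref{eq:M1-check} you take a genuinely different and in fact cleaner route. The paper splits $\R^2$ into $\{|y|\le 1\}$ and $\{|y|\ge 1\}$ and on the second region controls the ratio $\log(1+|y-\beta|)/\log(1+|y|)$ by a constant $C(\kappa)$; you instead use the single global additive bound $\log(1+|y-\beta|) \le \log(1+|y|) + \log(1+|\beta|)$, coming from the submultiplicativity $1+|y-\beta| \le (1+|y|)(1+|\beta|)$, which avoids the case distinction entirely. What each buys: the paper's ratio bound, while clunkier here, is the kind of estimate one needs if the constant must tend to $1$ as $\kappa \to 0$; your additive bound is shorter and suffices for the fixed-neighborhood statement.

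There is, however, one small gap in your \eqref{eq:M1-check} argument as written. Your final chain
\begin{equation*}
\bigl|\|v\|_{u_1}^2 - \|v\|_X^2\bigr| \le \log\bigl(1+|\beta(u_1)|\bigr)\, |v|_2^2 \le \log\bigl(1+|\beta(u_1)|\bigr)\, \|v\|_X^2
\end{equation*}
gives the upper bound $\|v\|_{u_1}^2 \le (1+K)\|v\|_X^2$ with $K=\log(1+|\beta(u_1)|)$, but the lower bound it yields, $\|v\|_{u_1}^2 \ge (1-K)\|v\|_X^2$, is vacuous whenever $K\ge 1$, i.e.\ $|\beta(u_1)|\ge e-1$, which can certainly occur. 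To repair it, use the other natural comparison in the middle inequality: since $\|v\|_{u_1}\ge \|v\|_H \ge |v|_2$ (see \eqref{eq:uniform-lower-bounds}), you also get $\|v\|_X^2 \le \|v\|_{u_1}^2 + K|v|_2^2 \le (1+K)\|v\|_{u_1}^2$, which together with the upper bound gives the two-sided equivalence with $C=\sqrt{1+K}$ uniformly on $U$. With this one-line fix the proof is complete.
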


\begin{proof}
  We first note that $\log r - \log s \le |r-s|$ for $r,s \ge 1$. Moreover, we have
    $$
        \int_{\R^2} \log(1 + |y|) v(y + \beta(u_i)) w(y + \beta(u_i)) dy = \int_{\R^2} \log(1 + |y - \beta(u_i)|) v(y) w(y) dy\qquad \text{for $i=1,2$}  
	$$
        and thus
        \begin{align*}
          &\left|  \langle v,w \rangle_{u_1} -  \langle v,w \rangle_{u_2} \right|\\
          &\le \int_{\R^2} \Bigl|\log(1 + |y - \beta(u_1)|)-\log(1 + |y - \beta(u_2)|\Bigr||v(y)w(y)|dy\\
          &\le  \int_{\R^2} \Bigl| |y - \beta(u_1)|-|y - \beta(u_2)|\Bigr| |v(y)w(y)|dy\\
          &\le  |\beta(u_1)-\beta(u_2)| \int_{\R^2}|v(y)w(y)|dy
        \end{align*}
        Hence, since $\beta$ is locally Lipschitz continuous in $L^2(\R^2)$, we find a neighborhood $U \subset X \setminus \{0\}$, open in the relative $L^2(\R^2)$-topology, and a constant $C>0$ with the property that (\ref{eq:M4-check}) holds for all $u_1, u_2 \in U$ and $v,w\in X$. From this and the Cauchy-Schwarz inequality, we obtain \ref{M4}.

        Next, let $\kappa>0$. It is easy to see that there exists $C = C(\kappa)>1$ with 
        $$
        \frac{\log r}{\log s} \le C \qquad \text{for $s \ge 2,\: |r-s| \le \kappa$.}
        $$
        Hence for every $\beta \in \R^2$ with $|\beta| \le \kappa$ we have
        $$
        \log(1 + |y - \beta| \le C \log(1 + |y|) \qquad \text{if $|y|\le 1$}
        $$
        and
        $$
        \log(1 + |y - \beta|)\le \log(2+ \kappa) \qquad \text{if $|y|\le 1$}
        $$
        which implies that
        $$
        |v (\cdot + \beta)|_*^2 \le C |v|_*^2 + \log(2+\kappa)|v|_2^2 \le \tilde C \|v\|_X^2 \qquad \text{for all $v \in X$.}
        $$
        with $\tilde C = C + \log(2+\kappa)>1$. By translating $v$ and passing from $\beta$ to $-\beta$, this also implies that
        $$
        |v|_*^2  \le \tilde C \|v(\cdot+ \beta) \|_X^2 \qquad \text{for all $v \in X$.}
        $$
        Consequently, for every $u \in X \setminus \{0\}$ with $|\beta(u)| \le \kappa$ and every $v \in X$ we have 
$$
\|v\|_u = \|v (\cdot + \beta(u))\|_H^2 +  |v (\cdot + \beta(u))|_*^2 \le \|v\|_H^2 + \tilde C \|v\|_X^2 \le (1+\tilde C) \|v\|_X^2
$$
and, similarly,
$$
\|v\|_X^2 = \|v\|_H^2 + |v|_*^2 \le \|v (\cdot + \beta(u))\|_H^2 +  \tilde C \|v (\cdot + \beta(u))\|_X^2 \le (1+\tilde C) \|v(\cdot + \beta(u))\|_X^2 = (1+\tilde C)\|v\|_u
$$
Since $\beta$ is continuous and therefore locally bounded, these estimates imply \ref{M1}.
\end{proof}
\begin{corollary}\label{cor:unifequivalent}
	Let $u_n, u \in X$ be such that $u_n \to u$ strongly in $L^2(\R^2)$. Then, there is $C > 0$ such that
	$$ \frac{1}{C} \| v \|_{X} \leq \| v \|_{u_n} \leq C \| v \|_X \qquad \text{for all } v \in X,$$
	for all $n$ large enough.
\end{corollary}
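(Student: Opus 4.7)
The claim is essentially an immediate consequence of Lemma~\ref{lemma:scalarproduct}, and the plan is to unpack why. The whole point of that lemma is that the neighborhood on which the uniform equivalence holds is open in the $L^2(\R^2)$-topology, not merely the $X$-topology, so $L^2$-convergence is already enough to land inside it.

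Before starting, I would note that the statement tacitly requires $u\neq 0$, since the notation $\|\cdot\|_u$ is only defined for $u\in L^2(\R^2)\setminus\{0\}$. Under $L^2$-convergence this also forces $u_n\neq 0$ for all $n$ sufficiently large, so that each norm $\|\cdot\|_{u_n}$ makes sense. With this understood, I would invoke Lemma~\ref{lemma:scalarproduct} at the base point $u$: specializing the estimate \eqref{eq:M1-check} to $u_1=\tilde u$, it yields an $L^2(\R^2)$-neighborhood $U$ of $u$ and a constant $C>0$ such that
\[
\tfrac{1}{C}\|v\|_X \;\le\; \|v\|_{\tilde u}\;\le\; C\|v\|_X
\qquad\text{for every } \tilde u\in U\setminus\{0\},\ v\in X.
\]
The conclusion then follows: since $u_n\to u$ in $L^2(\R^2)$, we have $u_n\in U$ for all $n$ large, and applying the bound above with $\tilde u=u_n$ yields the desired uniform equivalence with the same constant $C$.

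The reason there is no genuine obstacle is that the $L^2$-openness of the neighborhood $U$ in Lemma~\ref{lemma:scalarproduct} itself reflects a structural fact: the scalar product $\langle\cdot,\cdot\rangle_u$ depends on $u$ only through the generalized barycenter $\beta(u)\in\R^2$, and $\beta$ is locally Lipschitz on $L^2(\R^2)\setminus\{0\}$ by Theorem~\ref{thm:barycenter}. Hence the only thing one has to be careful about in writing up the argument is to point out explicitly that Lemma~\ref{lemma:scalarproduct} delivers an $L^2$-neighborhood (which is precisely what the proof of that lemma establishes via the estimate $|\beta(u_1)-\beta(u_2)|\lesssim\|u_1-u_2\|_{L^2}$), so that the hypothesis of mere $L^2$-convergence of $u_n$ to $u$ is enough to invoke it.
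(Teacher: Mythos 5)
Your proof is correct and takes essentially the same route as the paper, which simply invokes Lemma~\ref{lemma:scalarproduct} to obtain an $L^2$-neighborhood $U$ of $u$ on which the uniform equivalence holds and then observes that $u_n \in U$ for $n$ large. Your added remark that the statement tacitly requires $u \neq 0$ (so that $\|\cdot\|_u$ and, for large $n$, $\|\cdot\|_{u_n}$ are well defined) is a reasonable clarification but does not change the argument.
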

\begin{proof}
	We just need to take a relative $L^2$-neighborhood $u \in U \subset X$, as in Lemma \ref{lemma:scalarproduct} and the inequalities hold for $n$ large enough that $u_n \in U$.
\end{proof}

\subsection{Action of subgroups of $E(2)$}
\label{sec:action-subgroups-e2}

In this section, we let $G$ be a closed subgroup of the group of euclidean motions $E(2)$ on $\R^2$.
The closedness assumption on $G$ implies the following: If $g_n:= g_{A_n,b_n} \in G$ are given with $g_n \to g_{A,b}$, i.e., $A_n \to A$ in $O(2)$ and $b_n \to b$ in $\R^2$, then $g_{A,b} \in G$.

For a given continuous homomorphism $\zeta : G \to \Z_2 := \{-1,1\}$ we then define the linear action of $G$ on either one of the function spaces $L^p(\R^2)$, $H$ and $X$ by 
\begin{equation}\label{eq:zeta_action}
	(g \ast_\zeta u)(x) := \zeta(g)u(g^{-1}x) \qquad g \in G.
\end{equation}
We will refer to $\eqref{eq:zeta_action}$ as the $\zeta$-action of $G$. This action is isometric on the spaces $L^p(\R^2)$ and $H$ but in general not on the space $X$, as $\|\cdot\|_X$ is not invariant under translations.  
We will mostly consider the case $\zeta \equiv 1$, and in this case we simply write $g \ast u$ in place of $g \ast_\zeta u$.

Observe that if $\zeta \not \equiv 1$ and $u \in X$ is invariant under the $\zeta$-action of $G$, then $u$ is a sign-changing function.

We first note the following direct consequence of (\ref{eq:homogeneity-scalar-prduct-family}) and Lemma~\ref{M3-satisfied-lemma-0}.

\begin{lemma}
\label{M3-satisfied-lemma}
The $\zeta$-action of $G$ defined in (\ref{eq:zeta_action}) satisfies condition $(M3)$ from Section~\ref{sec:LS theory}.  
\end{lemma}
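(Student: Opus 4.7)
The plan is to reduce the claim for the $\zeta$-action to the case of the standard (unsigned) euclidean action that has already been handled in Lemma~\ref{M3-satisfied-lemma-0}. The key observation is that the $\zeta$-action differs from the standard action $g \ast u = u \circ g^{-1}$ only by a global sign $\zeta(g) \in \{-1,+1\}$, since $g \ast_\zeta u = \zeta(g)\,(g \ast u)$ for every $u$, and the scalar products $\langle \cdot, \cdot \rangle_u$ are both bilinear in their arguments and invariant under nonzero scalar multiplication of the base point $u$.

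First I would write out, for $u \in X \setminus \{0\}$, $v,w \in X$ and $g \in G$, the chain
\begin{equation*}
\langle g \ast_\zeta v, g \ast_\zeta w \rangle_{g \ast_\zeta u}
= \langle \zeta(g)\,(g\ast v),\, \zeta(g)\,(g\ast w) \rangle_{\zeta(g)\,(g\ast u)}
= \zeta(g)^2 \,\langle g\ast v, g\ast w \rangle_{g\ast u},
\end{equation*}
where in the second equality I use bilinearity in the two slots and the homogeneity property~(\ref{eq:homogeneity-scalar-prduct-family}) to drop the scalar $\zeta(g)$ from the subscript (the latter is applicable because $g \ast u \ne 0$ as $g$ is a bijection on $\R^2$). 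Since $\zeta$ takes values in $\{-1,+1\}$ we have $\zeta(g)^2=1$, so the right-hand side equals $\langle g\ast v, g\ast w \rangle_{g\ast u}$.

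Then I would invoke Lemma~\ref{M3-satisfied-lemma-0}, which gives exactly $\langle g\ast v, g\ast w \rangle_{g\ast u} = \langle v,w \rangle_u$ for the standard euclidean action of $E(2) \supset G$. Combining the two identities yields $\langle g \ast_\zeta v, g \ast_\zeta w \rangle_{g \ast_\zeta u} = \langle v,w \rangle_u$, which is precisely condition \ref{M3}. No step is particularly delicate here; the only point worth noting is that the use of the homogeneity~(\ref{eq:homogeneity-scalar-prduct-family}) is essential, because the $\zeta$-action genuinely changes the sign of $u$ when $\zeta(g)=-1$, and without the homogeneity the subscript $g\ast_\zeta u$ could not be replaced by $g\ast u$.
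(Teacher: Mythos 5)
Your proof is correct and is precisely the argument the paper has in mind: the paper states the lemma as a ``direct consequence of \eqref{eq:homogeneity-scalar-prduct-family} and Lemma~\ref{M3-satisfied-lemma-0},'' and you have simply spelled out the two-line reduction (pull out $\zeta(g)^2=1$ by bilinearity, drop the scalar from the subscript by homogeneity, then invoke Lemma~\ref{M3-satisfied-lemma-0}).
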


Moreover, we have the following. 
\begin{lemma}
	\label{example-conditions-CC-G-*}
	In the setting above, the $\zeta$-action of $G$ satisfies the condition \ref{Gstar} from Section~\ref{sec:LS theory}.  
\end{lemma}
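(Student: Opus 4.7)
The plan is to exploit the norm structure of $X$, which penalizes large translations, together with the closedness of $G$. Any element $g_n \in G$ can be written as $g_n = g_{A_n,b_n}$ with $A_n \in O(2)$ and $b_n \in \R^2$. Since $O(2)$ is compact, I can extract a subsequence along which $A_n \to A$ in $O(2)$. The heart of the argument is then to show that the translation parts $(b_n)_n$ form a bounded sequence in $\R^2$.

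To establish this boundedness, I would assume $u \neq 0$ (otherwise $v = 0$ and $g = \id$ works) and exploit the convergence $g_n \ast_\zeta u \to v$ in $X$, which forces $|g_n \ast_\zeta u|_*$ to stay bounded. Substituting $y = g_n^{-1}x$ gives
\begin{equation*}
|g_n \ast_\zeta u|_*^2 = \int_{\R^2} \log(1+|g_n y|)\, u^2(y)\,dy,
\end{equation*}
and, since $|g_n y| = |A_n y + b_n| \ge |b_n| - |y|$, picking $R>0$ large enough so that $\int_{|y|\le R} u^2\,dy > 0$ yields
\begin{equation*}
|g_n \ast_\zeta u|_*^2 \ge \log\bigl(1 + \max(|b_n|-R,0)\bigr)\int_{|y|\le R} u^2(y)\,dy,
\end{equation*}
so $|b_n| \to \infty$ would contradict the boundedness of $|g_n \ast_\zeta u|_*$. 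The main obstacle is precisely this step, and it is also what ultimately ties the argument to the specific form of the norm $\|\cdot\|_X$.

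Having established boundedness, I would pass to a further subsequence with $b_n \to b \in \R^2$, so that $g_n = g_{A_n,b_n} \to g := g_{A,b}$ in $E(2)$. Because $G$ is closed in $E(2)$, $g \in G$. For the final identification $v = g \ast_\zeta u$, I note that since $\zeta : G \to \Z_2$ is continuous and $\Z_2$ is discrete, $\zeta(g_n) = \zeta(g)$ for $n$ large; combined with the standard continuity of the translation/rotation action on $L^2(\R^2)$, this gives $g_n \ast_\zeta u \to g \ast_\zeta u$ in $L^2(\R^2)$. The continuous embedding $X \hookrightarrow L^2(\R^2)$ (recall \eqref{eq:uniform-lower-bounds}) together with the hypothesis $g_n \ast_\zeta u \to v$ in $X$ then forces $v = g \ast_\zeta u$, completing the verification of \ref{Gstar}.
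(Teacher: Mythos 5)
Your proof is correct, but it follows a genuinely different route than the paper. The paper's argument identifies the limit of the translation parts $(b_n)_n$ in essentially one step by invoking the equivariance and $L^2$-continuity of the generalized barycenter map $\beta$ from Subsection~\ref{subsec:barycenter}: since $\beta(|w|)=\beta(w)$, one has $\beta(g_n \ast_\zeta u) = \beta(g_n \ast u) = g_n(\beta(u)) = A_n\beta(u)+b_n$, and this must converge to $\beta(v)$ because $g_n \ast_\zeta u \to v$ in $L^2(\R^2)$ with $v\neq 0$. In fact the paper only needs convergence in $H$ for this step, not in $X$. You instead establish boundedness of $(b_n)_n$ directly from the $|\cdot|_*$-part of the $X$-norm, via the change of variables
$$
|g_n\ast_\zeta u|_*^2 = \int_{\R^2} \log\bigl(1+|g_n y|\bigr)u^2(y)\,dy \ge \log\bigl(1+\max(|b_n|-R,0)\bigr)\int_{|y|\le R}u^2\,dy,
$$
which would blow up if $|b_n|\to\infty$, contradicting $g_n\ast_\zeta u \to v$ in $X$. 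Both approaches then finish the same way: pass to a subsequence with $A_n\to A$, $b_n\to b$, use closedness of $G$ to get $g:=g_{A,b}\in G$, use discreteness of $\Z_2$ to fix $\zeta(g_n)=\zeta(g)$ eventually, and identify $v=g\ast_\zeta u$ via (a.e.\ or $L^2$) convergence. Your version is more elementary in that it avoids the barycenter machinery entirely, at the price of using the full $X$-convergence hypothesis; the paper's version is shorter once $\beta$ is available and reuses a tool that is already built and needed elsewhere in the construction of the equivariant family of scalar products.
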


\begin{proof}
	Let $u,v \in X$, and we let $(g_n)_n$ be a sequence in $G$ with $g_n \ast_\zeta u \to v$ in $H$. Since $G$ acts isometrically on $H$, we then have $u=0$ if and only if $v=0$, and in this case we have $g \ast_\zeta  u =v$ for any $g \in G$.
	
	We may therefore assume $u \not = 0$, and we write $g_n:= g_{A_n,b_n}$ with $A_n \in O(2)$, $b_n \in \R^2$. From the convergence $g_n  \ast_\zeta  u \to v$ in $H$, it then follows that
	$$
	b_n + \beta(u) = \beta(g_n \ast u) =\beta(g_n  \ast_\zeta  u) \to \beta(v) \qquad \text{as $n \to \infty$,}
	$$
	where we recall in the second equality that $\beta(|u|) = \beta(u)$ for all $u \in X \setminus \{0\}$. Hence $b_n \to b:= \beta(v)-\beta(u)$. Moreover, after passing to a subsequence, we may assume that $A_n \to A$ in $O(2)$. Therefore $g_n \to g_{A,b}$ and $\zeta(g_n) \to \zeta(g_{A,b})$. Since the convergence $g_n \ast_\zeta  u \to v$ in $H$ also implies a.e.-convergence in $\R^2$, we have, for a.e. $x \in \R^2$,
	$$
	v(x)= \lim_{n \to \infty} (g_n  \ast_\zeta  u)(x)= \lim_{n \to \infty}\zeta(g_n) u (g_n^{-1} x)= \lim_{n \to \infty} \zeta(g_n) u (g_{A_n,b_n}^{-1} x)= \zeta(g_{A,b})u(g_{A,b}^{-1}x),
	$$
	which implies that $g_{A,b} \ast_\zeta  u = v$. We thus have proved \ref{Gstar}.
\end{proof}

Next we recall the following standard definitions.

\begin{definition}
	\label{group-elements-def}
	Let $g \in E(2)$.
	\begin{itemize}
		\item[(i)] We call $g$ a translation\footnote{Here $g$ is understood to be nontrivial, so $g\not =\id$} if $g=g_{b}$ for some $b \in \R^2 \setminus \{0\}$.
		\item[(ii)] We call $g$ a reflection if $g=g_{A,b}$ for some $A \in O(2)$, $b \in \R^2$ with $\det A = -1$ and $A b=-b$
		\item[(iii)] We call $g$ a rotation if $g=g_{A,b}$ for some $A \in O(2)$, $A \not =\id$ with $\det A = 1$ and $b \in \R^2$.
		\item[(iii)] We call $g$ a glide reflection if $g=g_{A,b}$ for some $A \in O(2)$, $b \in \R^2$ with $\det A = -1$ and $A b \not=-b$.  
	\end{itemize}
\end{definition}

Our aim is to apply the theory developed in Section~\ref{sec:LS theory} to $\Phi$ and $G$. Note however that we may not expect assumption \ref{Ic} to hold in general for $G$ and $\zeta$. For example if $\zeta \equiv 1$ and $G= \{\id, g\}$ with a reflection $g \in E(2)$, then there exists an infinite-dimensional subspace $X$ of functions $u$ satisfying $g \ast u=-u$. On the other hand, we have the following. 

\begin{lemma}
	\label{i-g-compatible}
If $u \in X \setminus \{0\}$ and $g \in E(2)$ is a translation or glide reflection, then $g* u \neq \pm u$.
\end{lemma}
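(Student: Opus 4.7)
The plan is to split into the two cases and reduce the glide reflection case to the translation case.

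For the translation case, suppose $g = g_b$ with $b \neq 0$ and assume for contradiction that $g \ast u = \pm u$, i.e.\ $u(\cdot - b) = \pm u$. Squaring and iterating gives $u^2(x - nb) = u^2(x)$ a.e.\ for every $n \in \N$. The idea is to exploit that membership in $X$ forces the logarithmic weight $\log(1+|x|)$ to be integrable against $u^2$, but translational invariance of $u^2$ by $b$ is incompatible with this. More precisely, the change of variable $y = x-nb$ yields
\begin{equation*}
|u|_*^2 \;=\; \int_{\R^2} \log(1+|x|)\,u^2(x)\,dx \;=\; \int_{\R^2} \log(1+|x+nb|)\,u^2(x)\,dx
\end{equation*}
for every $n \in \N$. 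Since $u \not\equiv 0$, the set $\{u\neq 0\}$ has positive measure, and on this set $\log(1+|x+nb|)\to \infty$ as $n\to\infty$; Fatou's lemma then forces the right-hand side to blow up, contradicting $|u|_* < \infty$.

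For the glide reflection case, write $g = g_{A,b}$ with $A \in O(2)$, $\det A = -1$, and $Ab \neq -b$. The key observation is that $A$ is a linear reflection on $\R^2$, so $A^2 = I$, whence
\begin{equation*}
g^2(x) \;=\; A(Ax + b) + b \;=\; x + (I+A)b,
\end{equation*}
so $g^2 = g_{(I+A)b}$ is a pure translation. The condition $Ab \neq -b$ is precisely $(I+A)b \neq 0$, so this translation is nontrivial. If $g\ast u = \pm u$, then applying $g$ twice yields $g^2 \ast u = u$, and the translation case (applied to $g^2$) forces $u = 0$, a contradiction.

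The main (mild) obstacle is the Fatou step in the translation case: one should justify that $\log(1+|x+nb|)\to\infty$ pointwise a.e.\ (this is immediate since $|x+nb|\geq n|b|-|x|\to\infty$ for every fixed $x$), and that $u\not\equiv 0$ gives a set of positive $u^2$-measure on which the integrand diverges. Aside from this, the argument is mostly bookkeeping, and the reduction $g^2$ is the structural observation that unifies the two cases.
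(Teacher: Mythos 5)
Your proof is correct and follows essentially the same route as the paper: both reduce the glide-reflection case to the translation case by observing that $g^2 = g_{(I+A)b}$ is a nontrivial pure translation (the paper writes $(I+A)b = 2b_2$ via the decomposition $b = b_1 + b_2$ with $Ab_1 = -b_1$, $Ab_2 = b_2$, which is the same quantity), and both then conclude from the resulting periodicity of $u^2$ that $u$ cannot lie in $X \setminus \{0\}$. The paper leaves this last contradiction implicit, whereas you make it explicit via Fatou applied to the weight $\log(1+|x+nb|)$; this is valid, though one could argue even more simply that a nontrivial $nb$-periodic $u^2$ already fails to be in $L^1(\R^2)$, contradicting $u \in L^2(\R^2)$.
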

\begin{proof}
Suppose by contradiction that $g* u = u$ or $g* u = - u$. Then we have $g^{2n}* u=u$ for every $n \in \Z$. If $g = g_{b}$ is a translation with some $b \in \R^2 \setminus \{0\}$, we have
	$g^{2n}=g_{\id,2n b}$ and hence
	$$
	u(x)= u(x+2n b) \qquad \text{for every $n \in \Z$ and a.e. $x \in \R^2$.}
	$$
	This contradicts the assumption that $u \in X \setminus \{0\}$. If $g = g_{A,b}$ is a glide reflection, we can write $b = b_1 + b_2$ with $A b_1 = -b_1$ and $Ab_2 = b_2$. Then
	we have $g^2 = g_{A^2,Ab+b}= g_{2b_2}$ so $g^2$ is a translation. As before, we then get 
	$$
	u(x)= u(x+4n b_2) \qquad \text{for every $n \in \Z$ and a.e. $x \in \R^2$.}
	$$
	Moreover, $b_2 \not =0$ since $Ab \not =-b$ by assumption. Again this contradicts the fact that $u \in X \setminus \{0\}$.
This concludes the proof.
\end{proof}

Finally, we recall the following standard version of Lions' lemma, see \cite{l}.
\begin{remark}\label{remark:H1boundedsequences}
	Let $(u_n)_n$ be bounded in $H^1(\mathbb{R}^2)$. Then, either $u_n \to 0$ in $L^s(\R^2)$ for all $s>2$, or there is a sequence of translations $g_n = g_{b_n}\in E(2)$, $b_n \in \R^2$ and $u \in H^1(\mathbb{R}^2) \setminus \{0\}$ such that, up to a subsequence, we have
	\begin{align*}
		g_n \ast u_n \rightharpoonup u &\quad \text{weakly in } H^1(\mathbb{R}^2), \\
		g_n \ast u_n \to u &\quad \text{in } L^s_{loc}(\mathbb{R}^2) \quad \text{for } s\geq 2,\\
		g_n \ast u_n \to u &\quad \text{a.e. in } \mathbb{R}^2.
	\end{align*}
\end{remark}

\section{High energy solutions of the indefinite logarithmic Choquard equation}\label{sec:choquard}
We continue using the notation from the previous section. Our aim is to apply the abstract theory in Section \ref{sec:LS theory} to obtain solutions to the Choquard equation
\begin{equation}\label{eq:choquard}
	- \Delta u + a(x) u + (\log|\cdot|\ast u^2)u = 0 \quad x \in \R^2,
\end{equation}
where $a \in L^\infty(\R^2)$ is invariant under the action of a closed subgroup $G \subset E(2)$.

For $\tau \ge 0$, we define the symmetric bilinear forms
\begin{align*}
	(u,v) & \mapsto B_1^\tau(u,v)= \int_{\R^2} \int_{\R^2}
	\log
	(e^{\tau}+|x-y|) u(x)v(y)\,dx dy,\\
	(u,v) & \mapsto B_2^\tau(u,v)= \int_{\R^2} \int_{\R^2}
	\log \Bigl(1+\frac{e^{\tau}}{|x-y|}\Bigr) u(x)v(y)\,dx dy,\\
	(u,v)& \mapsto B_0(u,v) \int_{\R^2} \int_{\R^2}
	\log(|x-y|)  u(x)v(y)\,dx dy, 
\end{align*}
noting that $B_0(u,v)=B_1^\tau(u,v)-B_2^\tau(u,v)$ for every $\tau \ge 0$.
Here the definition is restricted, in each case, to measurable
functions $u,v: \R^2 \to \R$ such that the corresponding double
integral is well defined in Lebesgue sense. Note that, since $0 \le \log
(1 +r) \le r$ for $r>0$, we have by the
Hardy-Littlewood-Sobolev inequality, see \cite{lieb}
\begin{equation}
	\label{eq:2}
	|B_2^\tau(u,v)| \le e^{\tau} \int_{\R^2} \int_{\R^2} \frac{1}{|x-y|} |u(x)v(y)|\,dx
	dy \le C_0 e^{\tau} |u|_{\frac{4}{3}} |v|_{\frac{4}{3}} \qquad \text{for $u,v
		\in L^{\frac{4}{3}}(\R^2)$}
\end{equation}
with a constant $C_0>0$.  We
now define, for $\tau \ge 0$, the functionals
\begin{align*}
	V_1^\tau&: H^1(\R^2) \to [0,\infty],&&\qquad V_1^\tau(u)=B_1^\tau(u^2,u^2)= \int_{\R^2} \int_{\R^2}
	\log
	(e^{\tau}+|x-y|) u^2(x)u^2(y)\,dx dy,\\
	V_2^\tau&: L^{\frac{8}{3}}(\R^2) \to [0,\infty) ,&&\qquad V_2^\tau(u)=B_2^\tau(u^2,u^2)= \int_{\R^2} \int_{\R^2}
	\log
	\Bigl(1+\frac{e^{\tau}}{|x-y|}\Bigr) u^2(x)u^2(y)\,dx dy,\\
	V_0&: H^1(\R^2) \to \R  \cup \{\infty\},&&\qquad V_0(u)=B_0(u^2,u^2)= \int_{\R^2} \int_{\R^2}
	\log(|x-y|)  u^2(x)u^2(y)\,dx dy.
\end{align*}
Note that, as a consequence of \eqref{eq:2}, we
have
\begin{equation}
	\label{eq:3}
	|V_2^\tau(u)| \le C_0 e^{\tau}  |u|_{\frac{8}{3}}^4 \qquad \text{for
		all $u \in L^{\frac{8}{3}}(\R^2)$,}
\end{equation}
so $V_2^\tau$ only takes finite values on $L^{\frac{8}{3}}(\R^2)$. Moreover, we have
\begin{equation}\label{eq:V1tau}
	V_1^\tau(u) \ge \tau |u|_{2}^4 \qquad \text{for every $\tau \in \R$, $u \in X$.}
\end{equation}
Since for $\tau \ge 0$ we have 
\begin{align*}
	\log\left(e^{\tau}+|x-y|\right) &\leq \log\left(e^\tau +|x|+|y|\right) \le \log\left(e^{\tau}(1 +|x|)(1 +|y|)\right) \\
	&\leq \tau + \log\left(1+|x|\right) + \log\left(1+|y|\right)
	\quad \text{for}\ x, y \in \mathbb{R}^{2},
\end{align*}
we get the estimate
\begin{align}\label{2-3}
	B_{1}^\tau(uv, wz) &\leq \int_{\mathbb{R}^{2}}\int_{\mathbb{R}^{2}}
	\left[\tau \log\left(1+|x|\right) + \log\left(1+|y|\right) \right]
	\left|u(x)v(x)\right|\left|w(y)z(y)\right|dxdy \notag\\
	&\leq \tau |u|_2|v|_2|w|_2|z|_2+  |u|_{*}|v|_{*}|w|_{2}|z|_{2}+|u|_{2}|v|_{2}|w|_{*}|z|_{*}
	\quad \text{for}\ u,v,w,z \in L^{2}(\mathbb{R}^{2})
\end{align}
with the conventions $\infty \cdot 0 = 0$ and $\infty \cdot s =\infty$
for $s>0$.

The following three lemmas are given in \cite[Lemmas 2.1, 2.2 and 2.6]{CW} for the special case $\tau = 0$. The proofs are almost the same for general $\tau \ge 0$, so we skip them.  

\begin{lemma}
	\label{sec:comp-cond-2}
	Let $(u_n)_n$ be a sequence in $L^2(\R^2)$ such that $u_n \to u \in
	L^2(\R^2) \setminus \{0\}$ pointwisely a.e. on $\R^2$. Moreover, let
	$(v_n)_n$ be a bounded sequence in $L^2(\R^2)$ such that
	\begin{equation*}
		\label{eq:19}
		\sup \limits_{n \in \N} B_1^\tau(u_n^2,v_n^2)<\infty \qquad \text{for some $\tau \in \R$.}
	\end{equation*}
	Then there exists $n_0 \in \N$ and $C>0$ such that $|v_n|_*<C$ for $n \ge n_0$.\\
	If, moreover,
	\begin{equation*}
		\label{eq:18}
		B_1^\tau(u_n^2,v_n^2) \to 0 \quad \text{and}\quad |v_n|_2 \to 0\qquad \text{as $n \to \infty$,}
	\end{equation*}
	then
	\begin{equation*}
		\label{eq:14}
		|v_n|_* \to 0 \qquad \text{as $n \to \infty$, $n \ge n_0$.}
	\end{equation*}
\end{lemma}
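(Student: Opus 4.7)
The strategy I would use hinges on the fact that $u\not\equiv 0$ is the pointwise limit of $u_n$, so one can locate a bounded set on which $u_n^2$ is uniformly bounded away from zero. The plan is then to exploit the bilinear form $B_1^\tau(u_n^2,v_n^2)$ restricted to the product of this set with a large annulus to recover control on the far-field part of $|v_n|_*^2=\int_{\R^2}\log(1+|x|)v_n^2\,dx$, while the near-field part is dominated trivially by $|v_n|_2^2$.

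First I would fix $\delta>0$ and a bounded measurable set $A\subset\R^2$ with $0<|A|<\infty$ and $|u|\geq\delta$ on $A$, which exists because $u\in L^2(\R^2)\setminus\{0\}$. By Egoroff's theorem applied on $A$, there is a subset $A'\subset A$ with $|A'|\geq |A|/2>0$ on which $u_n\to u$ uniformly, so $|u_n|\geq\delta/2$ on $A'$ for all $n\geq n_0$. Next I would choose $R>0$ with $A'\subset B_R(0)$; for $|x|\geq 2R$ and $y\in A'$, the elementary inequality $|x-y|\geq |x|/2$ gives $\log(e^\tau+|x-y|)\geq\log(1+|x|/2)$, and after enlarging $R$ so that $\log(1+|x|/2)\geq\tfrac12\log(1+|x|)$ on $\{|x|\geq 2R\}$ (using $\log(1+r/2)/\log(1+r)\to 1$ as $r\to\infty$), integrating over $\{|x|\geq 2R\}\times A'$ inside $B_1^\tau(u_n^2,v_n^2)$ yields
\begin{equation*}
B_1^\tau(u_n^2,v_n^2)\;\geq\;\frac{\delta^2|A'|}{8}\int_{|x|\geq 2R}\log(1+|x|)\,v_n^2(x)\,dx\qquad\text{for all }n\geq n_0.
\end{equation*}

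The proof would conclude by the decomposition
\begin{equation*}
|v_n|_*^2\;\leq\;\log(1+2R)\,|v_n|_2^2+\int_{|x|\geq 2R}\log(1+|x|)\,v_n^2(x)\,dx\;\leq\;\log(1+2R)\,|v_n|_2^2+\frac{8}{\delta^2|A'|}\,B_1^\tau(u_n^2,v_n^2).
\end{equation*}
Boundedness of $(v_n)_n$ in $L^2(\R^2)$ together with the assumed finiteness of $\sup_n B_1^\tau(u_n^2,v_n^2)$ then produces the uniform bound on $|v_n|_*$ claimed in the first assertion, while the stronger hypotheses $|v_n|_2\to 0$ and $B_1^\tau(u_n^2,v_n^2)\to 0$ force both terms on the right to vanish, giving $|v_n|_*\to 0$.

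The only real obstacle I foresee is ensuring the lower bound $|u_n|\geq\delta/2$ uniformly on a fixed set of positive measure; this is where Egoroff enters, and it is the one place where the a.e.\ (rather than $L^2$) convergence of $u_n$ is genuinely exploited. The remainder of the argument is a two-piece split of $|v_n|_*^2$ combined with the geometric comparison between the kernel $\log(e^\tau+|x-y|)$ and the weight $\log(1+|x|)$ for $y$ bounded and $|x|$ large, which are routine.
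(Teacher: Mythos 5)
Your proof is correct and follows the natural strategy; since the paper itself supplies no proof but defers to \cite[Lemma~2.1]{CW}, I compare against what is standard for this type of statement. The key steps — using $u\not\equiv 0$ and pointwise convergence with Egoroff to produce a compact set $A'$ of positive measure and an $n_0$ with $u_n^2\geq\delta^2/4$ on $A'$ for $n\geq n_0$, then comparing the kernel $\log(e^\tau+|x-y|)$ against the weight $\log(1+|x|)$ for $y\in A'$ and $|x|$ large, and finally splitting $|v_n|_*^2$ into a near-field part dominated by $|v_n|_2^2$ and a far-field part controlled by $B_1^\tau(u_n^2,v_n^2)$ — are precisely the ingredients one expects, and your constants and the two asserted conclusions follow cleanly.

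Two small remarks. First, the step $\log(e^\tau+|x-y|)\geq\log(1+|x|/2)$ and the step of dropping the integral over the complementary region both implicitly use $\tau\geq 0$ (so that $e^\tau\geq 1$ and the kernel is nonnegative); this is harmless because the paper only defines $B_1^\tau$ for $\tau\geq 0$, but it is worth noting since the lemma is stated with $\tau\in\R$. Second, there is a somewhat shorter variant of the final step that avoids the geometric near-/far-field split: the algebraic inequality $\log(e^\tau+|x-y|)\geq\log(1+|y|)-\log(1+|x|)$ for $\tau\geq 0$ (the lower-bound companion of the estimate used in the paper to derive (3.2)) applied with $x\in A'\subset B_R(0)$ gives directly
\[
B_1^\tau(u_n^2,v_n^2)\geq\frac{\delta^2|A'|}{4}\left(|v_n|_*^2-\log(1+R)\,|v_n|_2^2\right),
\]
from which both assertions follow at once. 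Your geometric argument is equivalent in spirit; the algebraic route just removes the need to enlarge $R$ and to compare $\log(1+|x|/2)$ against $\log(1+|x|)$.
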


\begin{lemma}
	\label{sec:comp-cond}
	\begin{itemize}
		\item[(i)] The space $X$ is compactly embedded in $L^s(\R^2)$ for all
		$s \in [2,\infty)$.
              \item[(ii)] For every $\tau \ge 0$, the functionals $V_0,V_1^\tau,V_2\tau$ are of class $C^2$ on $X$.\\
                Moreover, $(V_i^\tau)'(u)v = 4 B_i(u^2,uv)$ for $u,v \in X$ and $i=0,1,2$.
		\item[(iii)] $V_2^\tau$ is continuous on $L^{\frac{8}{3}}(\R^2)$ for every $\tau \ge 0$.
		\item[(iv)] $V_1^\tau$ is weakly lower semicontinuous on $H^1(\R^2)$ for every $\tau \ge 0$.
	\end{itemize}
\end{lemma}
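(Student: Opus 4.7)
My plan is to follow the strategy in \cite{CW} for the case $\tau = 0$, pointing out that the argument is essentially insensitive to the parameter $\tau$ because $\log(e^\tau + r) = \log(1+r) + O(\tau)$ uniformly on bounded pieces and the estimate \eqref{eq:2} is already uniform in the sense that it only affects a multiplicative constant. I would treat the four statements in the order (i), (iii), (iv), (ii), since (ii) is the substantive one and benefits from having the others in hand.

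For (i), I would combine the Sobolev embedding $H^1(\R^2)\hookrightarrow L^s(\R^2)$ for $s\in[2,\infty)$ with the weight controlling the tails. Given a bounded sequence $(u_n)\subset X$, the inequality
\[
\int_{|x|>R} u_n^2\,dx \le \frac{1}{\log(1+R)}\int_{\R^2}\log(1+|x|)u_n^2\,dx \le \frac{|u_n|_*^2}{\log(1+R)}
\]
shows that the $L^2$-mass escapes uniformly to zero outside large balls. Combined with Rellich--Kondrachov on each $B_R$ and a diagonal extraction, this gives a subsequence converging in $L^2(\R^2)$. Boundedness in $L^r(\R^2)$ for any finite $r$ then yields, by interpolation $|u_n-u|_s \le |u_n-u|_2^{\theta}|u_n-u|_r^{1-\theta}$, convergence in $L^s$ for every $s\in[2,\infty)$.

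For (iv), since $\log(e^\tau+|x-y|)\ge \tau\ge 0$, the integrand defining $V_1^\tau$ is nonnegative, so if $u_n\rightharpoonup u$ in $H^1(\R^2)$, we pass to a pointwise a.e.\ convergent subsequence and apply Fatou's lemma to the double integral. For (iii), I would write
\[
V_2^\tau(u_n)-V_2^\tau(u) = B_2^\tau(u_n^2-u^2,u_n^2)+B_2^\tau(u^2,u_n^2-u^2)
\]
and use \eqref{eq:2} with the factorization $u_n^2-u^2=(u_n-u)(u_n+u)$ together with Hölder in $L^{8/3}$ to estimate each term by $C e^\tau |u_n-u|_{8/3}(|u_n|_{8/3}+|u|_{8/3})^3$, which tends to $0$ when $u_n\to u$ in $L^{8/3}(\R^2)$.

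The substantial point is (ii). I would handle $V_2^\tau$ and $V_1^\tau$ separately, then conclude for $V_0=V_1^\tau-V_2^\tau$. For $V_2^\tau$ on $L^{8/3}(\R^2)$ (and hence on $X$ via (i)), the $C^2$-regularity and the formula $(V_2^\tau)'(u)v=4B_2^\tau(u^2,uv)$ follow from \eqref{eq:2} and the bilinearity of $B_2^\tau$ via a standard differentiation-of-a-multilinear-form argument; the Hessian is $(V_2^\tau)''(u)[v,w] = 8B_2^\tau(uv,uw)+4B_2^\tau(u^2,vw)$ and its continuity is controlled again by \eqref{eq:2}. For $V_1^\tau$ on $X$, I would use \eqref{2-3} to bound $B_1^\tau(uv,wz)$ by products of $|\cdot|_2$ and $|\cdot|_*$ norms, so every term in the Taylor expansion of $V_1^\tau(u+v)-V_1^\tau(u)$ is controlled by $\|v\|_X$ to the appropriate power, with the Fréchet derivative identified as $(V_1^\tau)'(u)v=4B_1^\tau(u^2,uv)$. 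Continuity of $u\mapsto (V_1^\tau)'(u)$ and of the Hessian in the operator norm on $X$ then reduces, by \eqref{2-3}, to checking that multilinear forms of the type $B_1^\tau(u_nv,u_nw)-B_1^\tau(uv,uw)$ vanish as $u_n\to u$ in $X$, which is a direct consequence of the $L^2$ and $*$-norm estimates together with the continuity of $u\mapsto u^2$ from $X$ into appropriate weighted $L^1$-spaces.

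The main obstacle is making the differentiability of $V_1^\tau$ on $X$ rigorous, because the kernel $\log(e^\tau+|x-y|)$ is unbounded at infinity and neither the two-variable integrand nor the functional is naturally controlled by $\|\cdot\|_H$ alone; the $|\cdot|_*$-component of the $X$-norm is indispensable, and the decomposition \eqref{2-3} is what makes every remainder term of order $o(\|v\|_X^2)$ or $o(\|v\|_X^2\|w\|_X)$ as required.
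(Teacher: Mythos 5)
The paper itself gives no proof of this lemma: it simply points to \cite[Lemmas~2.1, 2.2, 2.6]{CW} for the case $\tau=0$ and remarks that the argument carries over verbatim to $\tau\ge 0$. Your reconstruction follows the standard strategy one would find in \cite{CW} --- weighted tail control via $|u|_*^2/\log(1+R)$ plus Rellich--Kondrachov and interpolation for (i), the HLS estimate \eqref{eq:2} for (iii), a.e.\ convergence (via local $L^2$-compactness and diagonal extraction from weak $H^1$ convergence) plus Fatou for (iv), and the multilinear expansion of $B_i^\tau((u+v)^2,(u+v)^2)$ with \eqref{eq:2} and \eqref{2-3} controlling all remainder terms for (ii) --- and is correct. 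Your observation that $\tau$ only shifts constants and that the $|\cdot|_*$-norm is essential to close the estimate on $B_1^\tau$ is exactly the right point to stress.
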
	

\begin{lemma}\label{lemma:B1weakconvergence}
	Let $(u_n)_n$, $(v_n)_n$, $(w_n)_n$ be bounded sequences in $X$ such that $u_n \rightharpoonup u$ weakly in $X$. Then, for every $z \in X$, we have $B_1^\tau(v_n w_n, z(u_n - u)) \rightarrow 0$ as $n \rightarrow \infty$.
\end{lemma}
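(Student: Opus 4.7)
My plan is to set $\phi_n := u_n - u$, so that $\phi_n \rightharpoonup 0$ in $X$ and, by the compact embedding $X \hookrightarrow L^2(\R^2)$ from Lemma~\ref{sec:comp-cond}(i), also $\phi_n \to 0$ strongly in $L^2(\R^2)$. The key step is to write the kernel as an exact identity rather than an inequality: for $\tau \ge 0$, the bound $e^\tau+|x-y| \le e^\tau(1+|x|)(1+|y|)$ shows that
$$
H(x,y) := \tau + \log(1+|x|)+\log(1+|y|) - \log(e^\tau+|x-y|) \ge 0,
$$
so that $\log(e^\tau+|x-y|) = \tau + \log(1+|x|)+\log(1+|y|) - H(x,y)$. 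This yields the decomposition
$$
B_1^\tau(v_n w_n, z \phi_n) = T_n^{(1)} + T_n^{(2)} + T_n^{(3)} - R_n,
$$
with $T_n^{(1)} = \tau\bigl(\int v_n w_n\,dx\bigr)\bigl(\int z\phi_n\,dy\bigr)$, $T_n^{(2)} = \bigl(\int \log(1+|x|) v_n w_n\,dx\bigr)\bigl(\int z\phi_n\,dy\bigr)$, $T_n^{(3)} = \bigl(\int v_n w_n\,dx\bigr)\bigl(\int \log(1+|y|) z\phi_n\,dy\bigr)$, and $R_n = \iint H(x,y) v_n(x)w_n(x) z(y)\phi_n(y)\,dx\,dy$.

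The three product terms are handled via Cauchy--Schwarz: the factors $\int v_n w_n$ and $\int \log(1+|x|) v_n w_n$ are controlled by $|v_n|_2 |w_n|_2$ and $|v_n|_* |w_n|_*$ respectively, hence bounded. The factor $\int z\phi_n\,dy$ tends to $0$ since $\phi_n \to 0$ in $L^2$ and $z \in L^2$. For $T_n^{(3)}$, the remaining factor is $\langle z,\phi_n\rangle_* = \langle z,\phi_n\rangle_X - \langle z,\phi_n\rangle_H$, which vanishes in the limit because $\phi_n \rightharpoonup 0$ both in $X$ and (a fortiori) in $H$.

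The main obstacle is the remainder $R_n$, because $H$ is unbounded and $|\phi_n|_*$ need not tend to zero. I would handle it by a tail argument in $y$: fix $R>0$ and split $R_n = R_n^{(1)} + R_n^{(2)}$ according to whether $|y|\le R$ or $|y|>R$. Using $0 \le H(x,y) \le \tau + \log(1+|x|) + \log(1+|y|)$, one bounds
$$
|R_n^{(2)}| \le C\Bigl(\|z\mathbf{1}_{\{|y|>R\}}\|_{L^2} + \bigl(\smallint_{|y|>R}\log(1+|y|)z^2\,dy\bigr)^{1/2}\Bigr),
$$
using the uniform bounds on $|v_n|_2,|w_n|_2,|v_n|_*,|w_n|_*$ and on $|\phi_n|_*$, together with Cauchy--Schwarz on the mixed factor. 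Since $z \in X$, the right-hand side tends to $0$ as $R \to \infty$, uniformly in $n$. For fixed $R$, in the compact part $R_n^{(1)}$ the inner integral $\int H(x,y) v_n w_n\,dx$ is bounded on $\{|y|\le R\}$ by $[\tau+\log(1+R)]|v_n|_2|w_n|_2 + |v_n|_*|w_n|_* \le C(R)$, so $|R_n^{(1)}| \le C(R)|z|_2 |\phi_n|_2 \to 0$ as $n\to\infty$.

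A standard $\varepsilon/2$ argument (choose $R$ large first to make $|R_n^{(2)}|$ small uniformly, then $n$ large to make $|R_n^{(1)}|$ small) completes the proof. I expect the delicate point to be keeping track of which quantities are bounded uniformly in $n$ and which actually tend to zero; the decomposition above is designed precisely to isolate the only $*$-type convergence needed, namely $\langle z,\phi_n\rangle_* \to 0$, which follows directly from weak convergence in $X$.
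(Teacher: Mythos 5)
Your proof is correct. Since the paper does not present its own argument for this statement -- it cites \cite[Lemma 2.6]{CW} and remarks that the case $\tau \ge 0$ is ``almost the same'' as $\tau = 0$ -- a direct comparison with the paper's proof is not possible, so I comment on the substance of your argument instead.

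The decomposition $\log(e^\tau+|x-y|) = \tau + \log(1+|x|) + \log(1+|y|) - H(x,y)$ with $H \ge 0$ is exactly the content of the paper's inequality \eqref{2-3} promoted to an identity, and it is the right lever. Your treatment of the three product terms is sound; in particular, for $T_n^{(3)}$ the observation that $\langle z,\phi_n\rangle_* = \langle z,\phi_n\rangle_X - \langle z,\phi_n\rangle_H \to 0$ is the cleanest way to avoid the fact that $|\phi_n|_*$ need not vanish. (An even more direct route: $v \mapsto \langle z,v\rangle_*$ is a bounded linear functional on $X$ since $|\langle z,v\rangle_*| \le |z|_*|v|_* \le |z|_* \|v\|_X$, so weak convergence in $X$ gives the limit in one step.) The remainder $R_n$ is indeed the delicate term: a crude application of \eqref{2-3} yields only $|v_n|_2|w_n|_2|z|_*|\phi_n|_*$, which does not vanish, so something like your truncation in $y$ is genuinely necessary. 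Your tail estimate correctly isolates the two quantities $\|z\mathbf 1_{\{|y|>R\}}\|_2$ and $\bigl(\int_{|y|>R}\log(1+|y|)z^2\bigr)^{1/2}$, both of which vanish as $R\to\infty$ because $z\in X$, uniformly in $n$ thanks to the uniform bounds on $|v_n|_2,|w_n|_2,|v_n|_*,|w_n|_*,|\phi_n|_2,|\phi_n|_*$; and for fixed $R$ the compact part collapses to $C(R)\,|z|_2|\phi_n|_2\to 0$. All four integrals in your decomposition are absolutely convergent by \eqref{2-3}, so the splitting is legitimate. The $\varepsilon/2$ argument closes the proof. One equivalent packaging you might find slightly tighter: instead of truncating the $y$-integral, replace $z$ by a compactly supported $z_R$ close to $z$ in $\|\cdot\|_X$; the estimate \eqref{2-3} applied to $z-z_R$ gives the uniform-in-$n$ control, and for $z_R$ the remainder is handled as in your compact part. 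This is the same mechanism in a density-argument costume.
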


Now, for fixed $a \in L^{\infty}(\R^2)$ we define the bilinear form
$$
q_a(u,v)= \int_{\R^2} \nabla u \cdot \nabla v + \int_{\R^2}auv  \qquad u,v \in H^1(\R^2),
$$
and for short we write $q_a(u) := q_a(u,u)$ to denote the associated quadratic form. We also define 
$$ [u]_a^2 := \int_{\R^2} a(x)u^2 dx \qquad u \in H^1(\R^2). $$

Then we have the energy functional
\begin{equation}
  \label{eq:def-Phi-Choquard}
\Phi \in C^1(X,\R),\qquad  \Phi(u) = \frac{1}{2}q_a(u) + \frac{1}{4} V_0(u)
\end{equation}
whose critical points are weak solutions to \eqref{eq:choquard}.

For all $u \in X$, we define the fiber maps
\begin{equation}\label{eq:fibermaps}
	f_u: [0,\infty) \to \R, \quad f_u(t)= \Phi(t u)= \frac{t^2}{2}q_a(u) + \frac{t^4}{4} V_0(u).
\end{equation}

Finally, we introduce the Nehari manifold
$$
\cN:=  \{u \in X \setminus \{0\} \::\: \Phi'(u)u = 0\} = \{u \in X \setminus \{0\} \::\: f_u'(1)=0\} =  \{u \in X \setminus \{0\} \::\: q_a(u) = -V_0(u)\} .
$$
Then we split it in the subsets
$$
\cN_-:= \{u \in \cN\::\:  V_0(u) < 0 \}, \quad \cN_+:= \{u \in \cN\::\:  V_0(u) > 0 \}
$$
and
$$
\cN_0:= \{u \in \cN\::\: V_0(u) = 0\} = \{u \in X \setminus \{0\} \::\: q_a(u) = V_0(u)= 0\}.
$$

\begin{remark}\label{remark:propertiesg}
	Observe that there is a unique $t_u > 0$ such that $f_u'(t_u) = 0$ if and only if $q_a(u)V_0(u) < 0$. Indeed, this is clear from the identities
	\begin{equation*}\label{eq:derivativefiber}
		f_u'(t)=  q_a(u)+t^2V_0(u) \quad \text{and} \quad f_u''(t) = 2V_0(u)t,
	\end{equation*}
	in which case, $t_u$ is a global maximum for $V_0(u) < 0$ and a global minimum for $V_0(u) > 0$. The number $t_u$ is exactly
	$$ t_u^2 = - \frac{q_a(u)}{V_0(u)}. $$
	Then, we can define a map $\sigma(u):= t_u u$ on the set $\mathcal{O} := \{u \in X : q_a(u)V_0(u) < 0\}$, which is a $C^2$-function on $\mathcal{O}$ and satisfies $\sigma(u) \in \mathcal{N}$.
\end{remark}

\begin{remark}
 $\mathcal{N}_{\pm}$ is a $C^2$-manifold of codimension $1$ and locally closed in $X$. Indeed, observe that the map $J(u):= q_a(u) + V_0(u)$ satisfies $\mathcal{N} = J^{-1}(0)$ and 
 $$ J'(u)u = 2q_a(u) + 4V_0(u)u = 2 V_0(u) \neq 0 \quad \text{for } u \in \mathcal{N}_{\pm}$$
 so $0$ is a regular value of $J$. Moreover, $\mathcal{N}_{\pm}$ are natural constraints for $\Phi$, i.e. critical points of the restriction $\Phi|_{\mathcal{N}_{\pm}}$ are critical points of the entire functional $\Phi:X \to \R$, see \cite[Proposition 2.5]{c}.
\end{remark}

To state the main result of this section, we now introduce the main equivariance assumption. So, as in Section~\ref{sec:action-subgroups-e2} we consider a closed subgroup $G \subset E(2)$, and we assume that the potential $a \in L^\infty(\R^2)$ in \ref{sec:choquard} is $G$-invariant, i.e. we have $a \circ g = a$ for every $g \in G$.
Moreover, we consider the $\zeta$-action of $G$ on $X$ as defined in (\ref{eq:zeta_action}) for some given continuous homomorphism $\zeta : G \to \Z_2 := \{-1,1\}$.  It is easy to see from that the functional $\Phi:X \to \R$ is then invariant under the $\zeta$-action of $G$. Finally, we let $G_0$ be a fixed compact normal subgroup of $G$, and we say that a function $u \in X$ is $(G_0,\zeta)$-invariant if $g \ast_\zeta u= u$ for every $g \in G_0$. We also recall the subspace of $(G_0,\zeta)$-invariant functions
$$ X_0 := \{u \in X \::\: g \ast_\zeta u = u \quad \text{for all } g \in G_0 \}.$$

\begin{definition}
  \label{admissible-triple}
The pair $(G,G_0)$ will be called admissible if one of the following is satisfied.  
\begin{enumerate}[label*=$(G_\arabic*)$]
	\item \label{G1} $G = G_0 \subset O(2)$, and $G \not \subset \{id, g\}$ for any reflection $g \in E(2)$.
	\item \label{G3} $G = \langle g_{b_1} , g_{b_2} \rangle$ where $b_1,b_2 \in \R^2$ are linearly independent vectors, and $G_0 = \{id\}$.
	\item \label{G4} $G = \langle g_{A}, g_{b} \rangle$, where $g_{A,b}$ is a glide reflection, and $G_0 = \langle g_{A,0} \rangle$.
\end{enumerate}
\end{definition}
We point out that \ref{G1} includes the case where $G = G_0 = \langle g_A \rangle$ for a rotation $A \in SO(2)$ of angle $\pi/m$, $m \in \N$. In this case we may define $\zeta(g_A^k) = (-1)^{k}$ for $k = 1,\ldots,2m$, which implies that every $(G_0,\zeta)$-invariant function $u \in X \setminus \{0\}$ is sign changing and nonradial. 

The main result of this section is the following.

\begin{theorem}\label{thm:multiplicityN-}
	Let $a \in L^\infty(\R^2)$ be $G$-invariant and assume that the pair $(G,G_0)$ is admissible. Then the equation \eqref{eq:choquard} has a sequence $(\pm u_n)_n$ in $\mathcal{N}_{-}$ of nontrivial $(G_0,\zeta)$-invariant solution pairs with $\Phi(\pm u_n) \to \infty$ as $n \to \infty$. Moreover, if $\einf_{\R^2} a > 0$, then
	$$ \Phi(\pm u_1) = \inf \{ \Phi(u) : u \in X_0 \setminus \{0\}, \, \Phi'(u) = 0\} $$
\end{theorem}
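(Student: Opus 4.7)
The plan is to apply Theorem~\ref{c-k-to-infty-variant} to the restriction of $\Phi$ to $M := \cN_- \cap X_0$, equipped with the family of scalar products $\langle \cdot,\cdot \rangle_u$ constructed in Subsection~\ref{subsec:metric}. The abstract metric hypotheses \ref{M1}, \ref{M4}, \ref{M3} are already established in Lemmas~\ref{lemma:scalarproduct} and~\ref{M3-satisfied-lemma}, and \ref{Gstar} follows from Lemma~\ref{example-conditions-CC-G-*}. The $G$-invariance of $\Phi$ under the $\zeta$-action is a direct consequence of the $G$-invariance of $a$ together with the translation invariance of the logarithmic convolution; evenness of $\Phi$ and the symmetry $M = -M$ are immediate. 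Since $\cN_-$ is a codimension-one $C^2$-submanifold of $X$ and $X_0$ is a closed $G$-invariant subspace, $M$ inherits a $C^{1,1}$-submanifold structure.

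First I would set $c_* = 0$ and $c_\infty = +\infty$. On $\cN_-$ one has $\Phi(u) = \tfrac14 q_a(u) = -\tfrac14 V_0(u) > 0$, and by combining the Hardy--Littlewood--Sobolev estimate~\eqref{eq:3} with a Gagliardo--Nirenberg interpolation inside the identity $q_a(u) = V_2^\tau(u) - V_1^\tau(u)$, for a $\tau$ large enough to absorb $|a|_\infty$, one obtains a uniform positive lower bound for $\Phi$ on $\cN_-$; hence $\Phi^c = \emptyset$ for all small $c > 0$, yielding condition~(ii) of Theorem~\ref{c-k-to-infty-variant} with $k_* = 0$. Completeness of $(\Phi^{-1}([0,c]),d)$ for each $c < \infty$ follows from Lemma~\ref{compatability-lipschitz} together with the norm closedness of $\Phi^{-1}([0,c])$ in $X$. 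For the genus growth required in (iii), for each $k \in \N$ I would construct a $k$-dimensional subspace $V_k \subset X_0$ --- by taking linear combinations of translates (or $G_0$-symmetrized translates) of a fixed compactly supported bump --- such that $V_0(u) < 0$ and $q_a(u) > 0$ for every $u \in V_k \setminus \{0\}$. The fiber map $\sigma$ of Remark~\ref{remark:propertiesg} then maps $V_k \setminus \{0\}$ continuously and oddly into $M$ with bounded $\Phi$-values, and the monotonicity and subadditivity of the genus give $\gamma(\Phi^{c_k}) \geq k$ for some $c_k < \infty$.

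The main obstacle will be the $N$-Cerami condition~\ref{NCG0} at each $c > 0$. Given an $(NC)_c$-sequence $(u_n) \subset M$, the first step is to bound $\|u_n\|_{u_n}$ uniformly. The identity $q_a(u_n) = 4\Phi(u_n) \to 4c$ on $\cN_-$ provides an initial estimate on the Sobolev part, and combining the gradient bound $\|\nabla_{u_n}\Phi(u_n)\|_{u_n}(1+N(u_n)) \to 0$ with the equivariance~\eqref{M3-satisfied-lemma-sufficient} and the compensated compactness in Lemma~\ref{sec:comp-cond-2}, applied after translating by $g_{-\beta(u_n)}$, yields control on the translated $|\cdot|_*$-seminorm. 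Once boundedness of $\tilde u_n := u_n(\cdot + \beta(u_n))$ in $X$ is secured, I choose $g_n \in G$ according to the admissible case: in case~\ref{G1} the $(G_0,\zeta)$-invariance forces $\beta(u_n) \in (\R^2)^{G_0} = \{0\}$ (since $G_0 \subset O(2)$ contains a nontrivial rotation by the admissibility hypothesis), so $g_n = \mathrm{id}$ suffices; in case~\ref{G3} I pick the lattice element nearest to $-\beta(u_n)$; in case~\ref{G4} I combine a power of the base glide reflection with an element of its translation subgroup. After this adjustment $(g_n \ast_\zeta u_n)$ is bounded in $(X,\|\cdot\|_X)$, so Lemma~\ref{sec:comp-cond}(i) produces a subsequence converging weakly in $X$ and strongly in $L^s(\R^2)$ for every $s \in [2,\infty)$ to some $u \in X_0$; Lemmas~\ref{sec:comp-cond-2} and~\ref{lemma:B1weakconvergence}, together with the vanishing gradient, then upgrade the convergence to strong in $X$ and identify $u$ as a critical point of $\Phi|_M$.

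Finally, condition~\ref{Ic'} is automatic in case~\ref{G1} since $G_0 = G$, and in cases~\ref{G3} and~\ref{G4} every element of $G \setminus G_0$ is a translation or a glide reflection, so Lemma~\ref{i-g-compatible} rules out $g \ast_\zeta u = -u$ for any $u \in X \setminus \{0\}$. All hypotheses of Theorem~\ref{c-k-to-infty-variant} thus holding, we obtain a sequence of critical values $c_k \to \infty$ of $\Phi|_M$, and the principle of symmetric criticality (applicable because $G_0$ is compact and acts isometrically on $X$ in every admissible case, with $X_0$ its fixed-point subspace) identifies the associated critical points as weak solutions of~\eqref{eq:choquard}. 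Since $\cN_-$ is a natural constraint for $\Phi$, this produces the sought-after $(G_0,\zeta)$-invariant pairs $(\pm u_n)_n$ with $\Phi(\pm u_n) \to \infty$.
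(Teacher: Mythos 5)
Your overall framework—apply Theorem~\ref{c-k-to-infty-variant} to $\Phi$ on $M = \cN_- \cap X_0$, with the equivariant family of scalar products, and verify its hypotheses case by case—is exactly the paper's strategy, and many of your observations (e.g., why \ref{Ic'} is automatic in case~\ref{G1} and reduces to Lemma~\ref{i-g-compatible} in cases \ref{G3}/\ref{G4}, and the barycenter-based choice of $g_n$ in each admissible case) are correct and align with the paper. However, there are two genuine gaps where your sketch would not go through as stated.

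First, the choice $c_* = 0$ with $k_* = 0$ is not justified, and is in fact the heart of the difficulty in the indefinite case. You claim a uniform positive lower bound for $\Phi$ on $\cN_-$ from Hardy--Littlewood--Sobolev plus Gagliardo--Nirenberg; but on $\cN_-$ one has $\Phi(\sigma(v)) = -q_a(v)^2/(4V_0(v))$ for $v \in \mathcal O$, and when $a$ changes sign the cone $\{q_a > 0\}$ has a nontrivial boundary $\{q_a = 0\}$ that can be crossed while $V_0 < 0$ stays bounded away from $0$; along such families $\Phi|_{\cN_-}$ tends to $0$. The interpolation you invoke gives $q_a(u) = -V_0(u) \le V_2^\tau(u) \le C e^\tau |u|_{8/3}^4 \le C' e^\tau |\nabla u|_2 |u|_2^3$, which constrains $|\nabla u|_2$ versus $|u|_2$ but does not bound $q_a(u)$ away from zero when $[u]_a^2$ is negative. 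Moreover, with $c_* = 0$ the completeness hypothesis (i) would require $(\Phi^{-1}([0,c]),d)$ to be complete, which fails: a $d$-Cauchy sequence with $\Phi(u_n) \to 0$ can converge in $H$ to $0 \notin \cN_-$, and indeed the proof of Lemma~\ref{lemma:dcomplete} explicitly uses $a > 0$ to exclude this. The paper's actual argument (Proposition~\ref{lemma:Sps}, Corollary~\ref{cor:ak_unbounded}, Lemma~\ref{lemma:ckpositive}) does not try to show $\Phi^c$ is empty; it introduces the auxiliary manifold $\Lambda = \{|u|_2=1,\, V_0(u)=0\}$, proves finite genus of its sublevels $\Lambda(c)$ via the spectral theory of $-\Delta + \log(1+|x|)$, and transfers this to $\Phi^c$ through the scaling map $f$ of \eqref{eq:fmap}. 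This yields a $c_* > 0$ at which the genus, not the sublevel, is finite—which is both necessary and sufficient for condition (ii).

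Second, your verification of \ref{NCG0} leaves the most delicate step unaddressed: $L^2$-boundedness of an $(NC)_c$-sequence. You write that $q_a(u_n) = 4\Phi(u_n) \to 4c$ ``provides an initial estimate on the Sobolev part,'' but since $[u_n]_a^2$ may be large negative, $q_a(u_n) \to 4c$ gives no bound on $|\nabla u_n|_2$ unless $|u_n|_2$ is already controlled. The paper's Proposition~\ref{prop:PSC23} devotes the main part of the argument to this: assuming $|u_n|_2 \to \infty$, it passes to $\hat u_n = u_n/|u_n|_2$, shows $V_0(\hat u_n) \to 0$ while $(\hat u_n)$ satisfies the hypotheses of Proposition~\ref{PS-application}, extracts a nonzero $X$-weak limit, and reaches a contradiction with Lemma~\ref{lemma:V0'notzero} (nonvanishing of $V_0'$). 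Your sketch jumps directly to applying Lemma~\ref{sec:comp-cond-2} after translating by $-\beta(u_n)$, but that lemma already presupposes $L^2$-boundedness of the sequence to which it is applied. Without the normalized-sequence contradiction argument, there is no way to secure boundedness of $u_n(\cdot + \beta(u_n))$ in $X$, and the Cerami verification collapses.
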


Before we start with the proof of this theorem, we first deduce the main results of the introduction from Theorem~\ref{thm:multiplicityN-}.

\begin{proof}[Proof of Theorems \ref{thm:multiplicity_translation} - \ref{thm:multiplicity_reflection}]
  Theorem~\ref{thm_multiplicity_radial} follows from Theorem~\ref{thm:multiplicityN-} by considering $G = G_0 = O(2)$ and the constant homomorphism $\zeta \equiv 1$. In this case, the admissibility condition \ref{G1} is satisfied, and a $(G_0,\zeta)$-invariant function is merely a radially symmetric function.
  
To deduce Theorem~\ref{thm_multiplicity_nonradial} from Theorem~\ref{thm:multiplicityN-}, we let $g \in O(2)$ be a rotation of angle $\pi/m$. Moreover, we set $G = G_0 := \{ g^{j} \}_{j=1}^{2m}$, and we define an homomorphism $\zeta:G \to \Z_2$ by setting $\zeta(g^j) := (-1)^{j}$. Again, the admissibility condition \ref{G1} is satisfied in this case, and every $(G_0,\zeta)$-invariant function is nonradial and sign-changing. 

Next, for the proof of Theorem~\ref{thm:multiplicity_translation},  it suffices to consider the group $G$ of translations by vectors in $\Z^2$, and $G_0 = \{id\}$, $\zeta \equiv 1$. Then the admissibility condition \ref{G3} is satisfied.  

For the last case, Theorem~\ref{thm:multiplicity_reflection}, we let $G \subset E(2)$ be the group generated by the reflection $h$ at the $x_1$-axis and by the translation $x \mapsto x+ e_1$, where $e_1$ is the first coordinate vector. Moreover, we let $G_0$ be the normal subgroup generated by $h$, and we consider $\zeta \equiv 1$. Then the admissibility condition \ref{G4} is satisfied, and a $(G_0,\zeta)$-invariant function $u \in X$ satisfies $u(x_1,-x_2) = u(x_1,x_2)$ for $x \in \R^2$.
\end{proof}
The remainder of this section is devoted to the proof of Theorem~\ref{thm:multiplicityN-}. So from now on, we fix an admissible pair $(G,G_0)$ of subgroups of $E(2)$ in the sense of Definition~\ref{admissible-triple}. We wish to apply Theorem~\ref{c-k-to-infty-variant} with $M = \cN_-$ and the restriction $\Phi\Big|_{\cN_-}$ functional $\Phi$ defined in (\ref{eq:def-Phi-Choquard}).

In the following subsections, we verify the assumptions of Theorem~\ref{c-k-to-infty-variant}. 

\subsection{Completeness of energy intervals}
\label{sec:local-completeness}

The aim of this short subsection is to prove the following. 
\begin{lemma}\label{lemma:dcomplete}
	$\mathcal{N}_- \cap \Phi^{-1}([a,b])$ is complete for all $0<a \leq b < \infty$ with respect to the metric $d$ defined in \eqref{eq:definition-metric-d}.
      \end{lemma}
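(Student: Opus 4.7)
My approach is to promote the $d$-Cauchy condition to an $X$-Cauchy condition (along a subsequence) using Lemma~\ref{compatability-lipschitz}, and then pass to the limit using the continuity of $q_a$, $V_0$ and $\Phi$ on $X$.

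Let $(u_n) \subset \cN_- \cap \Phi^{-1}([a,b])$ be a $d$-Cauchy sequence. I fix an anchor point $u_0 \in \{u_n\}$ and apply Lemma~\ref{compatability-lipschitz}(ii) at $u_0$. Its proof yields constants $r, C > 0$ and a threshold $\rho > 0$, together with relative $X$-neighborhoods $U_* = \overline{B_X(u_0, r/2)} \cap \cN_-$ and $U_0 = B_X(u_0, r) \cap \cN_-$, with the following combined property: if $v \in U_*$ and $d(v,w) < \rho$, then $w \in U_0$ and $\|v-w\|_X \le C\, d(v,w)$. Indeed, any path from $v$ to $w$ of $d$-length below $\rho$ cannot exit $U_0$ (such paths starting in $U_*$ have length at least $\tilde{C} r/2$ for some $\tilde{C} > 0$, by the inner argument in the proof of Lemma~\ref{compatability-lipschitz}), and inside $U_0$ path length dominates $\|v-w\|_X/C$ via the local equivalence of $\|\cdot\|_\gamma$ and $\|\cdot\|_X$.

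Using the $d$-Cauchy property, I extract a subsequence $(u_{n_k})$ starting at $u_{n_1} = u_0$ with $\sum_k d(u_{n_k}, u_{n_{k+1}}) < \min(\rho, r/(2C))/2$. Then the triangle inequality gives $d(u_{n_k}, u_0) < r/(4C) < \rho$, which by the combined property forces $u_{n_k} \in U_*$ and $\|u_{n_k} - u_0\|_X \le C\, d(u_{n_k}, u_0)$. Applying the property again with $v = u_{n_k} \in U_*$ and $w = u_{n_l}$, where again $d(u_{n_k}, u_{n_l}) < \rho$, one obtains $\|u_{n_k} - u_{n_l}\|_X \le C\, d(u_{n_k}, u_{n_l}) \to 0$ as $k,l \to \infty$. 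Hence $(u_{n_k})$ is Cauchy in the Hilbert space $X$ and converges to some $u^* \in X$.

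To conclude, Lemma~\ref{sec:comp-cond}(ii) ensures that $q_a$, $V_0$ and $\Phi$ are continuous on $X$. Passing to the limit in the Nehari identity $q_a(u_n) + V_0(u_n) = 0$ gives $u^* \in \cN$, while the bound $V_0(u_n) \le -4a < 0$ passes to the limit to yield $V_0(u^*) \le -4a < 0$, hence $u^* \in \cN_-$, and $\Phi(u^*) = -V_0(u^*)/4 \in [a,b]$. By Lemma~\ref{compatability-lipschitz}(i), $X$-convergence is equivalent to $d$-convergence, so $d(u_{n_k}, u^*) \to 0$; since $(u_n)$ is $d$-Cauchy with a $d$-convergent subsequence, the entire sequence converges in $d$ to $u^*$. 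The main obstacle I anticipate is the careful extraction of the combined property from the proof of Lemma~\ref{compatability-lipschitz}(ii): one must identify the threshold $\rho$ below which a path starting in $U_*$ cannot exit the larger neighborhood $U_0$, and synchronize it with the $X$-Lipschitz estimate available inside $U_0$, so that these estimates can be chained along the Cauchy subsequence.
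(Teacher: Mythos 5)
There is a genuine gap in the subsequence extraction step. You fix an anchor $u_0 \in \{u_n\}$ first, apply Lemma~\ref{compatability-lipschitz}(ii) at $u_0$ to obtain constants $\rho, C, r$, and then want a subsequence \emph{starting at} $u_0$ with $\sum_k d(u_{n_k},u_{n_{k+1}}) < \min(\rho, r/(2C))/2$. But the $d$-Cauchy property only controls pairwise distances in the \emph{tail} of the sequence; it gives no smallness of $d(u_0,u_n)$ unless $u_0$ is itself chosen far enough along. To know how far "far enough" is, one would already need $\rho, C, r$ — which in turn depend on the chosen $u_0$. The loop does not close. (A $d$-Cauchy sequence always has finite $d$-diameter, but that diameter can be much larger than the local threshold $\rho$ at whichever $u_n$ you choose, and nothing forces the thresholds $\rho(u_n)$ to stabilize from below as $n\to\infty$ without further input.)

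The paper's proof resolves exactly this issue by proceeding in a different order. It first uses the \emph{global} inequality \eqref{inequalitymetric}, $d(u,v)\ge \|u-v\|_H$, to upgrade the $d$-Cauchy condition to $H$-Cauchy (hence $L^2$-Cauchy) for the full sequence, giving $u_n\to u^*$ in $L^2$. It then shows $u^*\neq 0$ using the hypothesis $a>0$: if $u^*=0$, then $q_a(u_n)\to 0$, $V_2^0(u_n)\to 0$ by \eqref{eq:3} and Sobolev embedding, hence $V_1^0(u_n)\to 0$ since $0\le V_1^0(u_n)< V_2^0(u_n)$ on $\mathcal N_-$, so $\Phi(u_n)\to 0$, contradicting $a>0$. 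Only then does it invoke Lemma~\ref{lemma:scalarproduct}, which gives \emph{uniform} equivalence of the norms $\|\cdot\|_v$ for $v$ in a fixed $L^2$-neighborhood of $u^*\neq 0$; since the whole tail $(u_n)$ enters this $L^2$-neighborhood, the chart-based lower bound from the proof of Lemma~\ref{compatability-lipschitz} yields a uniform constant $C>0$ with $d(u_n,u_m)\ge C\|u_n-u_m\|_X$ for all large $n,m$, without any anchor-dependent threshold.

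Your final limiting steps (continuity of $q_a,V_0,\Phi$ on $X$, passing to the limit in the Nehari identity, the bound $V_0(u^*)\le -4a<0$ forcing $u^*\in\mathcal N_-$, and upgrading convergence of a subsequence to convergence of the whole $d$-Cauchy sequence via Lemma~\ref{compatability-lipschitz}(i)) are fine and essentially match the paper. What is missing is the preliminary $H$-convergence via \eqref{inequalitymetric} and the nontriviality argument that unlocks the uniformity of constants; without it, the chaining of local Lipschitz estimates around a pre-chosen anchor is not justified.
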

      The following basic inequality will be needed in the proof of this lemma and also at a later stage:
      
\begin{equation}
\label{inequalitymetric}
d(u,v) \geq \|u-v\|_H \geq |u-v|_2 \qquad \text{for all } u,v \in \mathcal{N}_-.
\end{equation}
To see this, if suffices to consider a $C^1$-path $\gamma :[0,1] \to \mathcal{N}_-$ from $u$ to $v$ and to note that, by (\ref{eq:uniform-lower-bounds})
	\begin{align*}
		\int_0^1 \| \gamma'(s) \|_{\gamma(s)} ds \ge \int_0^1 \| \gamma'(s) \|_{H} ds \geq \|u-v\|_H \ge |u-v|_2.
	\end{align*}
        Taking the infimum over all such paths in this inequality, we get (\ref{inequalitymetric}).

\begin{proof}[Proof of Lemma~\ref{lemma:dcomplete}]
  Let $(u_n)_n$ be a Cauchy sequence in $\mathcal{N}_- \cap \Phi^{-1}([a,b])$. By (\ref{inequalitymetric}), $(u_n)_n$ is a Cauchy sequence in $H$ and therefore $u_n \to u$ strongly in $H$ and therefore also in $L^2(\R^2)$. We claim that
$u \not = 0$. Suppose by contradiction that $u = 0$; then also $q_a(u_n) \to 0$ and $V_2^0(u_n) \to 0$ by (\ref{eq:3}) and Sobolev embeddings. Since $0 \le V_1^0(u_n) < V_2^0(u_n)$ by definition of $\mathcal{N}_-$, it then also follows that $V_1^0(u_n) \to 0$ and therefore $V_0(u_n) \to 0$, which implies that $\Phi(u_n) \to 0$ as $n \to \infty$. This contradicts the assumption $a>0$ in the statement of the lemma. Hence $u \not = 0$, as claimed. Now Lemma \ref{lemma:scalarproduct} guarantees the uniform equivalence of the norms $\|\cdot\|_{v}$ for $v$ in an $L^2$-neighborhood of $u$. Then, we may argue as in Lemma \ref{compatability-lipschitz}\footnote{See the argument preceding \eqref{eq:lower_bound_d}.} to find a constant $C>0$ such that
	\begin{align*}
		\int_0^1 \| \gamma_{n,m}'(s) \|_{\gamma_{n,m}(s)} ds \geq  C \| u_n - u_m \|_{X}.
	\end{align*}
	Thus $(u_n)_n$ is a Cauchy sequence in $X$ as well, which implies that $u_n \to u$ strongly in $X$. Consequently, $u \in \Phi^{-1}([a,b])$ by the continuity of $\Phi$ in $X$. Moreover, $u \in \mathcal{N}_- \cup \mathcal{N}_0$ since $\mathcal{N}_- \cup \mathcal{N}_0$ is relatively closed in $X \setminus \{0\}$. However, $\Phi \equiv 0$ on $\mathcal{N}_0$, and therefore $u \in \mathcal{N}_- \cap \Phi^{-1}([a,b])$ since $a>0$.
\end{proof}

\subsection{Verification of the $(NCG)$-condition}
\label{sec:verif-ncg-cond}
This subsection is devoted to the verification of condition $(NCG)_c$, as introduced in Subsection~\ref{sec:abstr-sett-main}, in the present context where $M = \cN_-$ and $\Phi$ is defined in (\ref{eq:def-Phi-Choquard}). We start with the following useful observation.

\begin{lemma}\label{lemma:V0'notzero}
	For all $u \in X \setminus \{0\}$ we have $V_0'(u) \neq 0$ in $X'$.
\end{lemma}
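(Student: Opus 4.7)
My plan is to argue by contradiction. Suppose $V_0'(u) = 0$ in $X'$ for some $u \in X\setminus\{0\}$. Denote $w(y):= (\log|\cdot|\ast u^2)(y)$, so that by Lemma~\ref{sec:comp-cond}(ii) the hypothesis reads
\[
0 = V_0'(u)v = 4\,B_0(u^2, uv) = 4\int_{\R^2} u(y)v(y)w(y)\,dy \qquad \text{for all } v \in X.
\]
Testing with $v = u$ immediately gives $V_0(u) = 0$. Testing with arbitrary $v \in C_c^\infty(\R^2) \subset X$ and using that $uw \in L^1_{loc}$ (as $u \in L^2_{loc}$ and $w$ is continuous), I conclude $uw = 0$ a.e.~on $\R^2$.

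The next step is to upgrade this into compact support of $u$. Since $u \in H^1 \subset L^p(\R^2)$ for every $p \in [2,\infty)$, the logarithmic potential $w$ is in $C^{1,\alpha}_{loc}(\R^2)$, and a direct expansion shows $w(y) = \|u\|_2^2 \log|y| + O(1)$ as $|y| \to \infty$, hence $w(y) \to +\infty$. Thus there is $R>0$ with $w>0$ on $\{|y|>R\}$, forcing $u=0$ a.e.~outside $B_R$. Now that $u$ is (essentially) supported in $\overline{B_R}$, I want to extract a quantitative identity from the dilation $u_\mu(x) := u(x/\mu)$. A direct computation gives
\[
V_0(u_\mu) = \mu^4 V_0(u) + \mu^4 \log\mu \cdot \|u\|_2^4,
\]
and the formal chain rule at $\mu=1$ suggests the identity $V_0'(u)[-y\cdot \nabla u] = 4V_0(u) + \|u\|_2^4 = \|u\|_2^4$. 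I will justify this identity rigorously via integration by parts: for $u$ compactly supported,
\[
\int u(-y\cdot\nabla u)w = -\tfrac{1}{2}\int y\cdot\nabla(u^2)\,w = V_0(u) + \tfrac{1}{2}\int u^2\, y\cdot\nabla w,
\]
and a symmetry argument (averaging the integrand in $x \leftrightarrow y$ and using $y\cdot(y-x)+x\cdot(x-y)=|y-x|^2$) shows $\int u^2\,y\cdot\nabla w = \tfrac{1}{2}\|u\|_2^4$.

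The final step is to transfer this identity to an actual element of $X$. Since $-y\cdot\nabla u \in L^2(\R^2)$ with compact support in $B_R$, I approximate it by $\phi_n \in C_c^\infty(B_{2R}) \subset X$ with $\phi_n \to -y\cdot\nabla u$ in $L^2$. Because $u \in L^2$ has compact support and $w \in L^\infty(B_{2R})$, Cauchy--Schwarz gives
\[
V_0'(u)\phi_n = 4\int u\phi_n w \;\longrightarrow\; 4\int u(-y\cdot\nabla u)w = 4V_0(u) + \|u\|_2^4 = \|u\|_2^4.
\]
On the other hand the hypothesis forces $V_0'(u)\phi_n = 0$ for every $n$, so $\|u\|_2 = 0$, contradicting $u \neq 0$. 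The main obstacle, and the step requiring the most care, is the transition from the abstract condition $V_0'(u)=0$ on $X$ to the scaling identity evaluated at $-y\cdot\nabla u$, which in general does not belong to $X$; this is precisely why I first establish compact support of $u$ using the continuity and logarithmic blow-up of $w$, as this allows me to reduce the problem to an $L^2$-approximation argument where the boundary terms in the integration by parts vanish.
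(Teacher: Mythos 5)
Your argument is correct and follows essentially the same strategy as the paper's proof: conclude from $V_0'(u)=0$ that $(\log|\cdot|\ast u^2)u=0$ a.e., deduce that $u$ is compactly supported from the logarithmic blow-up of the potential at infinity, and derive the contradiction from the dilation identity $V_0(u(\cdot/\mu))=\mu^4\bigl(V_0(u)+\log\mu\,|u|_2^4\bigr)$ together with an $L^2$-approximation of $-y\cdot\nabla u$ by test functions supported near $\operatorname{supp} u$. The only real difference is the technical justification of the Pohozaev-type identity $V_0'(u)[-y\cdot\nabla u]=|u|_2^4$: the paper differentiates $V_0(u_\varepsilon)$ under the integral sign by dominated convergence and controls the $B_1^0$ and $B_2^0$ pieces separately, while you obtain the same value more explicitly via integration by parts plus the symmetrization $y\cdot(y-x)+x\cdot(x-y)=|x-y|^2$ to evaluate $\int u^2\,y\cdot\nabla w\,dy=\tfrac12|u|_2^4$.
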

\begin{proof}
	Arguing by contradiction, let us assume that $V_0'(u) \equiv 0$, i.e., $V_0'(u) = 4 B_0(u^2,uv)=0$ for every $v \in X$. Since this holds, in particular, for $v \in C^\infty_c(\R^2)$, the fundamental lemma of the calculus of variations implies that $(\log|\cdot|\ast |u|^2)u \equiv 0$. And, since $(\log|\cdot|\ast|u|^2)(x) \to \infty$ as $|x| \to \infty$, it follows that $u$ is compactly supported; from this we will obtain a contradiction. Indeed, setting $u_\varepsilon := u(\varepsilon^{-1} \cdot)$, we have, on the one hand
	\begin{align}\label{eq:dV0pos}
		\left. \frac{d}{d\varepsilon}\right|_{\varepsilon = 1} V_0(u_\varepsilon) =\left. \frac{d}{d\varepsilon}\right|_{\varepsilon = 1} \varepsilon^4(\log \varepsilon |u|_2^4 + V_0(u)) = |u|_2^4 > 0.
	\end{align}
	On the other hand, from the dominated convergence theorem we derive the identity
	\begin{align*}
		\left. \frac{d}{d\varepsilon}\right|_{\varepsilon = 1} V_0(u_\varepsilon) = -4 \int_{\mathbb{R}^{2}} \int_{\mathbb{R}^{2}} \log|x-y| u(x)^2u(y)(\nabla u(y) \cdot y) \, dx dy.
	\end{align*}
	Now, define $g(y) := \nabla u(y) \cdot y$ and let $(\varphi_n)_n \subset C^{\infty}_c(\mathbb{R}^2)$ such that $\varphi_n \to g$ in $L^2(\mathbb{R}^2)$. Observe that, since $u$ has compact support, we also have $\varphi_n \to g$ with respect to $|\cdot|_*$. Then, from \eqref{eq:2} we have
	\begin{align}\label{eq:B2estimate}
		|B_2(u^2,u(g-\varphi_n))| \leq C |u|_{8/3}^2|u(g - \varphi_n))|_{4/3} \leq C |u|_{8/3}^2 |u|_4 |g-\varphi_n|_2 = o(1),
	\end{align}
	and then, from \eqref{2-3} we have
	\begin{align}\label{eq:B1estimate}
		|B_1(u^2,u(g-\varphi_n))| \leq |u|_*^2|u|_2|g-\varphi_n|_2 + |u|_2^2|u|_*|g - \varphi_n|_* = o(1).
	\end{align}
	Thereby, we conclude from \eqref{eq:B1estimate} and \eqref{eq:B2estimate} that 
	\begin{align*}
		0 = V_0'(u)\varphi_n = B_0(u^2,ug) = -4 \int_{\mathbb{R}^{2}} \int_{\mathbb{R}^{2}} \log|x-y| u(x)^2u(y)(\nabla u(y) \cdot y) \, dx dy + o(1)
	\end{align*}
	which contradicts \eqref{eq:dV0pos}.
\end{proof}

Our first step in our verification of the \ref{NCG0}-condition is the following criterion for compactness modulo the $G$-action.

\begin{proposition}
	\label{PS-application}
	Let $(G,G_0)$ be an admissible pair, and let $(u_n)_n \subset X_0$ be a sequence which satisfies, for some $\tau > 0$, the following conditions:
	\begin{itemize}
		\item[(i)] $(u_n)_n$ is bounded in $H$.
		\item[(ii)] $V_1^\tau(u_n)$ remains bounded as $n \to \infty$ for all $\tau \ge 0$.   
		\item[(iii)] $\liminf \limits_{n \to \infty}V_2^\tau(u_n)>0$ for some $\tau \ge 0$.
	\end{itemize}
	Then there exists $u \in X \setminus \{0\}$ and a sequence $(g_n)_n$ in $G$ with the property that, after passing to a subsequence, $g_n \ast_\zeta  u_n \rightharpoonup u$ weakly in $X$.
\end{proposition}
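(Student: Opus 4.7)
The plan is to combine Lions' concentration-compactness lemma (Remark~\ref{remark:H1boundedsequences}) with the group structure from the admissibility of $(G,G_0)$, using the logarithmic moment control supplied by the $V_1^\tau$-boundedness in (ii) to rule out escape of concentration in directions not compensated by translations in $G$.

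First, condition (iii) combined with the Hardy--Littlewood--Sobolev bound \eqref{eq:3} yields $\liminf_{n \to \infty} |u_n|_{8/3}^4 \ge c_0 > 0$, so $(u_n)$ does not vanish in $L^{8/3}(\R^2)$. Applying Remark~\ref{remark:H1boundedsequences} to the $H^1$-bounded sequence $(u_n)$, after passing to a subsequence I find translations $\tilde b_n \in \R^2$ and $u \in H^1(\R^2) \setminus \{0\}$ such that $w_n := u_n(\cdot - \tilde b_n) \rightharpoonup u$ weakly in $H^1$ and pointwise a.e.

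The crucial step is to replace the translations $\tilde b_n$ by group elements. In case~\ref{G3}, $G$ is a full-rank lattice of translations and the standard fundamental-domain decomposition $\tilde b_n = L_n + r_n$, with $L_n \in \Z b_1 + \Z b_2$ and $r_n$ bounded, suffices: setting $g_n := g_{L_n}$ and passing to a subsequence with $r_n \to r$, a standard change-of-variables argument gives $g_n \ast_\zeta u_n \rightharpoonup u(\cdot + r) \ne 0$ weakly in $H^1$. In case~\ref{G4}, decompose $\tilde b_n = \alpha_n b + \gamma_n b^\perp$ with $b^\perp$ perpendicular to $b$. If $\gamma_n$ stays bounded, I argue as in~\ref{G3} using the $\Z b$-sublattice. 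If $|\gamma_n| \to \infty$ along a subsequence, I derive a contradiction with (ii): the identity $u_n \circ A = \zeta(g_A) u_n$ together with the weak $H^1$-convergence of $w_n$ yields weak convergence of $u_n(\cdot - A\tilde b_n)$ to a nonzero reflection of $u$, so by Rellich $u_n$ has nontrivial $L^2(B_R)$-mass around both $\tilde b_n$ and $A\tilde b_n = \alpha_n b - \gamma_n b^\perp$; since $|\tilde b_n - A\tilde b_n| = 2|\gamma_n| \to \infty$, the pair-interaction estimate
$$V_1^0(u_n) \ge c \log\bigl(2|\gamma_n| - 2R\bigr) \to \infty$$
contradicts (ii). For case~\ref{G1}, the exclusion in the admissibility condition forces $G$ to contain a rotation $A$ of nontrivial angle $\theta$; if $|\tilde b_n| \to \infty$, the two-lump argument applied to $\tilde b_n$ and $A\tilde b_n$ diverges $V_1^0(u_n)$ since $|\tilde b_n - A\tilde b_n| = 2\sin(\theta/2)|\tilde b_n| \to \infty$. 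Hence $\tilde b_n$ is bounded, and taking $g_n = \id$ and extracting a subsequence so that $\tilde b_n \to \tilde b$ gives $u_n \rightharpoonup u(\cdot + \tilde b) \ne 0$ weakly in $H^1$.

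Finally, to upgrade weak convergence in $H^1$ to $X$, set $v_n := g_n \ast_\zeta u_n$. The $E(2)$-invariance of $V_1^\tau$ gives $V_1^\tau(v_n) = V_1^\tau(u_n)$, still bounded by (ii), and by Rellich $v_n \to v \ne 0$ pointwise a.e.\ along a further subsequence. Lemma~\ref{sec:comp-cond-2}, applied with this sequence playing the roles of both $(u_n)$ and $(v_n)$, then yields that $|v_n|_*$ stays bounded, so $(v_n)$ is bounded in the Hilbert space $X$. A further subsequence converges weakly in $X$, and continuity of the embedding $X \hookrightarrow H^1$ identifies this limit with $v \ne 0$, as required. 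The main obstacle is the group-matching step for~\ref{G1} and~\ref{G4}, where $G$ does not contain enough translations to compensate $\tilde b_n$ directly; the combination of $G_0$-symmetry and the logarithmic moment bound from (ii) is what prevents the Lions concentration from escaping to infinity in the directions uncompensated by $G$.
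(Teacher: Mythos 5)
Your proof follows essentially the same route as the paper's: Lions' lemma for nonvanishing, the two-lump interaction bound on $V_1^\tau$ to rule out escape in the uncompensated directions (cases \ref{G1} and \ref{G4}), a fundamental-domain decomposition of the Lions translations modulo the lattice of translations contained in $G$ (cases \ref{G3} and \ref{G4}), and Lemma~\ref{sec:comp-cond-2} to recover the bound in $X$. The only structural difference is ordering—you complete the replacement of translations by group elements entirely in $H^1$ and invoke Lemma~\ref{sec:comp-cond-2} only at the end, whereas the paper establishes the $X$-boundedness of $g_{b_n}\ast u_n$ immediately after Lions' lemma and then transfers it to the group-translated sequence—but the key estimates and ideas are identical.
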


\begin{proof}
It suffices to prove that $g_n* u_n \rightharpoonup u$ weakly in $X$ for suitable $u \in X \setminus \{0\}$, $g_n \in G$, since then $g_n \ast_\zeta  u_n \rightharpoonup u$ or $g_n \ast_\zeta  u_n \rightharpoonup -u$ in $X$ after passing to a subsequence. 

By assumption (iii), we may pass to a subsequence with
\begin{equation}
	\label{eq:iii-variant}
	\inf_{n \to \infty}V_2^\tau(u_n)>0.    
\end{equation}
Moreover, by assumption (i) and Remark \ref{remark:H1boundedsequences}, either $u_n \to 0$ in $L^s(\R^2)$ for all $s>2$, or there exists a sequence of translations $g_n := g_{b_n}$ and $u \in H^1(\R^2) \setminus \{0\}$ with the property that, after passing to a subsequence,
we have 
\begin{align*}
	g_{b_n}* u_n \rightharpoonup u &\quad \text{weakly in } H^1(\mathbb{R}^2), \\
	g_{b_n} * u_n \to u &\quad \text{in } L^s_{loc}(\mathbb{R}^2) \quad \text{for } s\geq 2,\\
	g_{b_n}* u_n \to u &\quad \text{a.e. in } \mathbb{R}^2.
\end{align*}
The first case can be ruled out by (\ref{eq:iii-variant}) and (\ref{eq:3}). Then, from $(ii)$ it follows using Lemma \ref{sec:comp-cond-2} that $(g_{b_n} \ast u_n)_n$ is bounded in $X$. Then, from Fatou's lemma we conclude that $|u|_* \le \liminf \limits_{n \to \infty}|g_{b_n} \ast u_n|_* < \infty$ and therefore $u \in X$.\\

We start with \ref{G1}. First we show that the sequence $(b_n)_n$ is bounded. In case \ref{G1}, $G$ must contain at least one rotation $g= g_{A}$ with $A \in O(2)$, $A \not =\id$ with $\det A = 1$. Next, let $R>0$ be such that 
$$\int_{B_R(0)}u^2\,dx >0,$$
and suppose by contradiction that
$|b_n| \to \infty$ after passing to a subsequence. Then we have $r_n:= \dist(B_R(-b_n),B_R(-A^{-1}b_n))\to \infty$ as $n \to \infty$, and the $G_0$-invariance of $u_n^2$ implies that
\begin{align*}
	V_1(u_n)&= \int_{\R^2}\int_{\R^2} \log (1+|x-y|)u_n^2(x)u_n^2(y)\,dxdy\\
	&\ge \int_{B_R(-b_n)}\int_{B_R(-A^{-1}b_n)} \log (1+|x-y|)u_n^2(x)u_n^2(y)\,dxdy\\  
	&\ge \log (1+r_n) \Bigl(\int_{B_R(-b_n)} u_n^2(x)dx\Bigr)^2,
\end{align*}
where, by Fatou's Lemma
\begin{align*}
	\liminf_{n \to \infty} \int_{B_R(-b_n)} u_n^2(x)dx&=\liminf_{n \to \infty} \int_{B_R(0)} u_n^2(x-b_n)dx\\
	&= \liminf_{n \to \infty} \int_{B_R(0)} (g_{b_n} * u_n)^2 dx \ge \int_{B_R(0)}u^2\,dx >0
\end{align*}
This contradicts assumption (ii). Thus $(b_n)_n$ is bounded and, after passing to a subsequence, we may assume that $b_n \to b$ in $\R^2$. Thereby, we see that $u_n \rightharpoonup g_{b}^{-1} \ast u$ weakly in $X$, i.e. $g_{n} = id$ for all $n \in \mathbb{N}$.

Now, for case \ref{G3}, we assume without loss of generality that $b_1 = (1,0), b_2 = (0,1)$. We define
$$\lfloor x \rfloor := \max\{z \in \mathbb{Z} : z \leq x \} \quad \text{ for } x\in \R \quad \text{and} \quad \lfloor z \rfloor := (\lfloor z_1 \rfloor, \lfloor z_2 \rfloor) \quad \text{for } z = (z_1,z_2) \in \R^2,$$
so that $g_{\lfloor z \rfloor} \in G$ for all $z \in \R^2$. Now, we have
$ | z - \lfloor z \rfloor | \leq \sqrt{2} $ for all $z \in \R^2$ and thus setting $h_n := g_{\lfloor b_n \rfloor}$ we see that
\begin{align*}
&\left| | h_n \ast u_n|_*^2  - |g_n \ast u_n |_*^2 \right| \leq \int_{\R^2} \left| \log(1+|x+\lfloor b_n \rfloor|) - \log(1+|x+b_n|) \right| u_n^2(x) dx \\
&\leq \int_{\R^2} \left| |x + \lfloor b_n \rfloor | - |x + b_n| \right| u_n^2(x) dx \leq \int_{\R^2} |b_n - \lfloor b_n \rfloor| u_n^2(x) dx \leq \sqrt{2} |u_n|_2^2
\end{align*}
Then $(h_n \ast u_n)_n$ is bounded in $X$ as well, since $(u_n)_n$ is bounded in $L^2(\R^2)$. Thus, passing to a subsequence we have $h_n \ast u_n \rightharpoonup g_{b}^{-1} \ast u \neq 0$ weakly in $X$, where $b \in \R^2$ is an accumulation point of the bounded sequence $(b_n - \lfloor b_n \rfloor)_n$.\\

Finally, for case \ref{G4}, we assume without loss of generality, that $G_0 = \{id, g\}$ with $g=g_{A,0}$, where $A \in O(2)$ is the reflection of the $x_2$-coordinate. By assumption, $G$ contains a translation $g_{\id,c}$ with $A c \not= -c$, i.e., $c_1 \not =0$.
Without loss of generality, we may assume that $c= c_1e_1$, since $g_{A,0} g_{\id c} g_{A,0} g_{\id,c}= g_{\id, 2c_1 e_1}$.
Similarly as in the proof with case \ref{G1} above, we may now exclude that for $b_n= (b_n^1,b_n^2)$ we have
$|b_n^2| \to \infty$, so, after passing to a subsequence, $|b_n^2|$ remains bounded. Hence we may obtain the assertion similarly as with \ref{G3} by translating along $c$ in the $x_1$-direction with $\tilde{g}_{\tilde{b}_n} \in G$, where the $\tilde{b}_n$ lie on the $x_1$-axis.
\end{proof}

\begin{remark}\label{remark:ceramiseq}
	We recall the function defined in \eqref{eq:def-N-w}
	$$ N(u) := \inf\{ \|w \|_w+ d(u,w) : w \in \mathcal{N} \}.$$
	Observe that, from Lemma~\ref{inequalitymetric}
	$$ \|w\|_w  + d(u,w) \geq |w|_2 + |u-w|_2 \geq |u|_2 \quad \text{for all } w \in \mathcal{N}_-,$$
	and thus, for all $u \in \mathcal{N}_-$
	$$ N(u) \geq |u|_2. $$
	As a consequence, if $(u_n)_n \subset \mathcal{N}_-$ is a $(NC)_c$-sequence for $\Phi$, then
	$$ \| \nabla_{u_n} \Phi(u) \|_{u_n}(1 + |u_n|_2) \to 0.$$ 
\end{remark}

\begin{proposition}\label{prop:PSC23}
	Let $(G,G_0)$ be an admissible pair, let $c \in \R \setminus \{0\}$, and let $(u_n)_n \subset \mathcal{N} \cap X_0$ be a sequence with
  $$
  \Phi(u_n) \to c \quad \text{and}\quad \|\nabla_{u_n} \Phi(u_n)\|_{u_n} (1+ |u_n|_2) \to 0 \qquad \text{as $n \to \infty$.}
  $$
Then, there are $g_n \in G$ and $u \in \mathcal{N} \cap X_0$ such that, passing to a subsequence, $g_n \ast_\zeta u_n \to u$ strongly in $X$. 
\end{proposition}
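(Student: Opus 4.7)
The plan is to combine the Nehari identity, a rescaling/concentration-compactness argument for $H^1$-boundedness, the nonlinear compactness result of Proposition~\ref{PS-application}, and a natural constraint (Lagrange multiplier) argument to upgrade weak convergence in $X$ to strong convergence. First, since $u_n \in \mathcal{N}$, we have $q_a(u_n) + V_0(u_n) = 0$, hence $\Phi(u_n) = q_a(u_n)/4 = -V_0(u_n)/4$, so $q_a(u_n) \to 4c$ and $V_0(u_n) \to -4c$. Since $c \ne 0$, $|V_0(u_n)|$ is bounded away from zero for large $n$, which will be essential for controlling the Lagrange multiplier later.

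Next, I would establish $H^1$-boundedness of $(u_n)$. Since $q_a(u_n)$ is bounded and $a \in L^\infty$, it suffices to bound $|u_n|_2$. Argue by contradiction: assume $|u_n|_2 \to \infty$ and rescale $v_n := u_n/|u_n|_2$, so $|v_n|_2 = 1$ and $(v_n)$ is bounded in $H$, while the $4$-homogeneity of $V_0$ gives $V_0(v_n) \to 0$. Applying Lions' vanishing/nonvanishing dichotomy to $(v_n)$: in the vanishing case $v_n \to 0$ in $L^s$ for all $s > 2$, so $V_2^0(v_n) \le C|v_n|_{8/3}^4 \to 0$, and then $V_1^0(v_n) \to 0$; but simultaneously $\sup_x \int_{B_R(x)} v_n^2 \to 0$ forces $\liminf V_1^0(v_n) \ge \log(1+R)$ for every $R$, a contradiction. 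In the nonvanishing case $\tilde v_n := g_{b_n} \ast v_n \rightharpoonup v \ne 0$ weakly in $H$, the full strength of the Cerami condition $\|\nabla_{u_n}\Phi(u_n)\|_{u_n}(1+|u_n|_2) \to 0$ is needed: transferred via the natural-constraint Lagrange identity on $\mathcal{N}$ (leveraging $|V_0(u_n)|$ bounded below), it is meant to yield a contradiction with $v \neq 0$.

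With $(u_n)$ bounded in $H$, I would then verify the hypotheses of Proposition~\ref{PS-application}: (ii) $V_1^0(u_n) = V_0(u_n) + V_2^0(u_n)$ is bounded, since $V_2^0(u_n) \le C\|u_n\|_H^4$; (iii) $V_2^0(u_n) \ge -V_0(u_n) \to 4c > 0$ when $c > 0$ (for $c < 0$ one takes a larger $\tau$ and uses Lions' lemma to prevent vanishing). Thus, after passing to a subsequence, there exist $g_n \in G$ and $u \in X \setminus \{0\}$ with $g_n \ast_\zeta u_n \rightharpoonup u$ weakly in $X$; since $G_0 \triangleleft G$, we have $g_n \ast_\zeta u_n \in X_0$ and so $u \in X_0$. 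Replacing $u_n$ by $g_n \ast_\zeta u_n$ preserves the Cerami condition via the $G$-invariance of $\Phi$, Lemma~\ref{lemma:invariant} and the $L^2$-isometry of $\ast_\zeta$, so one may assume $u_n \rightharpoonup u$ weakly in $X$.

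For the final upgrade to strong convergence: by the compact embedding $X \hookrightarrow L^s$ (Lemma~\ref{sec:comp-cond}(i)), $u_n \to u$ in $L^s$ for every $s \in [2,\infty)$, and Corollary~\ref{cor:unifequivalent} gives uniform equivalence of $\|\cdot\|_{u_n}$ and $\|\cdot\|_X$ for large $n$. A Lagrange multiplier decomposition of the full $u_n$-gradient into the constrained gradient $\nabla_{u_n}\Phi(u_n)$ and a multiple of $\nabla^X_{u_n} J(u_n)$, where $J(u)=q_a(u)+V_0(u)$, yields a multiplier $\lambda_n$ controlled by $\|\nabla_{u_n}\Phi(u_n)\|_{u_n}\|u_n\|_{u_n}/|V_0(u_n)|$, so that $\|\Phi'(u_n)\|_{X^*} \to 0$. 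Passing to the limit in $\Phi'(u_n) v \to 0$ for fixed $v \in X$ via Lemma~\ref{sec:comp-cond}(iii) and Lemma~\ref{lemma:B1weakconvergence} gives $\Phi'(u) = 0$, so $u \in \mathcal{N}$. Finally, testing $\Phi'(u_n)(u_n - u) \to 0$ and using the Nehari identities for both $u_n$ and $u$ isolates a remainder $B_1^0(u^2,(u_n - u)^2) \to 0$; combined with $|u_n - u|_2 \to 0$ and the logarithmic growth at infinity of $y \mapsto \int \log(1+|x-y|) u^2(x)\, dx$ (positive since $u \neq 0$), this forces $|u_n - u|_* \to 0$, and together with $H$-convergence deduced from $q_a(u_n) \to q_a(u)$ we get $u_n \to u$ in $X$. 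The main obstacles I anticipate are (i) closing the nonvanishing subcase of Step~2, which requires careful use of the Cerami gradient condition through the natural constraint, and (ii) the strong convergence in the weighted norm $|\cdot|_*$ in the final step, which relies essentially on the specific form of the log-convolution and cannot be deduced from standard compactness alone.
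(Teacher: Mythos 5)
Your overall framework matches the paper's: the Nehari identity, $L^2$-boundedness by contradiction via the normalized sequence $\hat u_n := u_n/|u_n|_2$, Proposition~\ref{PS-application} for compactness up to $G$-translations, control of the Lagrange multiplier through the $N$-Cerami bound, and strong convergence via the $B_1^0/B_2^0$-splitting together with Lemma~\ref{sec:comp-cond-2}. You correctly observe that the inequality $N(u)\ge |u|_2$ (Remark~\ref{remark:ceramiseq}) is precisely what turns the abstract $N$-Cerami condition into the quantitative bound $\|\nabla_{u_n}\Phi(u_n)\|_{u_n}(1+|u_n|_2)\to 0$, and your treatment of the vanishing subcase is a valid (and slightly different) way of reaching the same contradiction.

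However, as you yourself flag, there is a genuine gap in the nonvanishing subcase of the $L^2$-boundedness step. After Proposition~\ref{PS-application} produces $\hat u_n\rightharpoonup \hat u\ne 0$ weakly in $X$ (strongly in $L^2$, after a $G$-translation), the paper closes the case as follows: (a) the Cerami bound together with the uniform equivalence $\|\hat u_n\|_{u_n}\le C$ forces $\langle \nabla_{u_n}\Phi(u_n),u_n\rangle_{u_n}\to 0$, which with $\Phi'(u_n)u_n=0$ and $J'(u_n)u_n = -8c+o(1)$ yields $\mu_n\to 0$; (b) dividing the resulting identity $q_a(u_n,v)=o(1)-B_0(u_n^2,u_nv)$ by $|u_n|_2$ and using $|u_n|_2\to\infty$, $q_a(\hat u_n,v)\to q_a(\hat u,v)$ finite forces $B_0(\hat u_n^2,\hat u_n v)\to 0$ for every $v$; (c) the $B_1/B_2$-split and Lemma~\ref{sec:comp-cond-2} then upgrade $\hat u_n\rightharpoonup\hat u$ to strong $X$-convergence, so $V_0'(\hat u)=0$; and (d) this contradicts Lemma~\ref{lemma:V0'notzero}, a standalone structural lemma about the logarithmic kernel proved via a domain-dilation identity. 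Step (d) is the hinge, and it is specific to the planar log-Choquard structure; the ``natural constraint'' bookkeeping alone cannot supply it, and your sketch neither identifies Lemma~\ref{lemma:V0'notzero} nor offers a substitute. Two minor points: what Lemma~\ref{sec:comp-cond-2} requires in the final step is $B_1^0(u_n^2,(u_n-u)^2)\to 0$ with the full sequence $u_n^2$ (converging a.e.\ to $u^2\ne 0$) in the first slot, not $B_1^0(u^2,(u_n-u)^2)$; and the claim $\|\Phi'(u_n)\|_{X^*}\to 0$ is stronger than what is available or needed --- the paper only tests $\Phi'(u_n)$ against $u_n$ and $u_n-u$, using that $(u_n)$ is $X$-bounded after the translation, rather than asserting uniform operator-norm decay.
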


If, under the assumptions of Proposition~\ref{prop:PSC23}, we have $u_n \in X_0$ for all $n \in \N$, then also the functions
$g_n \ast_{\zeta} u_n \in X_0$  for all $n \in \N$ by (\ref{eq:G-0-invariance-X_0}) applied to the action $\ast = \ast_{\zeta}$.
Hence also $u = \lim \limits_{n \to \infty}g_n \ast_{\zeta} u_n \in X_0$ in this case.

\begin{proof}
  Let $J : X \to \R$ be given by $J(u) = q_a(u) + V_0(u)$, so that $\mathcal{N} = J^{-1}(0)$ and therefore
  $0= J(u_n)= q_a(u_n) + V_0(u_n)$ for all $n \in \N$, which implies that
  \begin{equation}
    \label{eq:constraint-condition-1}
  	0 \not = c + o(1) = \Phi(u_n) = \frac{1}{4}q_a(u_n)= -\frac{1}{4}V_0(u_n).
  \end{equation}
We also note that, for every $n \in \N$, there exists $\mu_n \in \R$ such that
	\begin{align}\label{eq:J}
	\begin{split}
		\Phi'(u_n)v &= \Phi'(u_n)[P_{u_n}v] + \mu_n J'(u_n)[v] \\
		&= \langle \nabla_{u_n} \Phi(u_n),v \rangle_{u_n} + \mu_n J'(u_n)[v]
		\quad \text{for all } v \in X,
	\end{split}
	\end{align}
        We first claim that $(u_n)$ is bounded in $L^2(\R^2)$. Suppose by contradiction that $|u_n|_2 \to \infty$ after passing to a subsequence. We then consider the sequence of normalized functions $\hat{u}_n := u_n /|u_n|_2$, $n \in \N$. By (\ref{eq:constraint-condition-1}), we then have
\begin{align*}
	c + o(1) = \Phi(u_n) = \frac{1}{4}q_a(u_n) = \frac{1}{4} |u_n|_2^2\Bigl(|\nabla \hat{u}_n|_2^2 + \int_{\R^2}a(x){\hat{u}_n}^2\,dx\Bigr)) \ge \frac{1}{4} |u_n|_2^2\Bigl(|\nabla \hat{u}_n|_2^2 - |a|_{\infty}\Bigr)),
\end{align*}
which, since $|u_n|_2 \to \infty$, implies that $\limsup \limits_{n \to \infty}|\nabla \hat{u}_n|_2^2 \le |a|_{\infty}$ and therefore $(\hat{u}_n)_n$ is bounded in $H^1(\R^2)$. Consequently, also $\Bigl(V_2^\tau(\hat{u}_n)\Bigr)_n$ remains bounded for every $\tau \ge 0$ as a consequence of (\ref{eq:3}) and Sobolev embeddings. Moreover, by (\ref{eq:constraint-condition-1}) we have 
\begin{equation}
  \label{eq:limit-V-0-zero-limit}
V_0(\hat{u}_n) = |u_n|_2^{-4}V_0(u_n)= - |u_n|_2^{-4}q_a(u_n) = -|u_n|_2^{-4}4 \Phi(u_n)= |u_n|_2^{-4}(-4 c + o(1)) \to 0
\end{equation}
as $n \to \infty$, which then also implies that $(V_1^\tau(\hat{u}_n))_n$ remains bounded for every $\tau \ge 0$. If $V_2^\tau(\hat{u}_n) \to 0$ as $n \to \infty$ for all $\tau \ge 0$, then 
$$
\liminf_{n \to \infty} V_0(\hat{u}_n) = \liminf_{n \to \infty} V_1^\tau(\hat{u}_n) \geq \tau \qquad \text{for all $\tau > 0$}
$$
according to \eqref{eq:V1tau}, which contradicts (\ref{eq:limit-V-0-zero-limit}). Hence, after passing to a subsequence, we have $\liminf \limits_{n \to \infty} V_2^\tau(\hat{u}_n)>0$ for some $\tau \ge 0$. Hence the sequence $(\hat u_n)_n \subset X$ satisfies the assumptions of Proposition~\ref{PS-application}, and therefore there exists $\hat{u} \in X \setminus \{0\}$ and $g_n \in G$, $n \in \N$ with
$g_n \ast_{\zeta} \hat{u}_n \to \hat{u}$ weakly in $X$ and therefore strongly in $L^2(\R^N)$. Without loss of generality, we may assume that $g_n = \id$ for all $n \in \N$ in the following, otherwise we replace $\hat{u}_n$ by $g_n \ast_\zeta {\hat{u}_n}$.  Here we note that, since $\hat{u}_n \in X_0$ for all $n \in \N$, we also have $g_n \ast_{\zeta} u_n \in X_0$  for all $n \in \N$ by (\ref{eq:G-0-invariance-X_0}).

As a consequence of Corollary \ref{cor:unifequivalent} we see that $\|\cdot\|_{u_n}= \|\cdot \|_{\hat{u}_n}$ is uniformly equivalent to $\|\cdot\|_X$ for every $n \in \N$, and therefore
\begin{equation}
  \label{eq:uniform-hat-un-bound}
\|\hat{u}_n\|_{u_n} \le C \qquad \text{for all $n \in \N$ with a constant $C>0$.}
\end{equation}
Moreover, by the Cerami property and (\ref{eq:uniform-hat-un-bound}), we have
        \begin{align*}
          | \langle \nabla_{u_n} \Phi(u_n),u_n \rangle_{u_n}| &\le \|\nabla_{u_n} \Phi(u_n)\|_{u_n} \|u_n\|_{u_n}\le
                                                                \|\hat u_n\|_{u_n} \|\nabla_{u_n} \Phi(u_n)\|_{u_n} |u_n|_2 \\
          &\le C \|\nabla_{u_n} \Phi(u_n)\|_{u_n} |u_n|_2 \to 0 \qquad \text{as $n \to \infty$,}
            \end{align*}
which by (\ref{eq:J}) and (\ref{eq:constraint-condition-1}) implies that
\begin{align*}
0=		q_a(u_n)+ V_0(u_n)  &= \Phi'(u_n)u_n = o(1) + \mu_n  J'(u_n)u_n \\
		&=  o(1) + \mu_n ( 2q_a(u_n) + 4 V_0(u_n))= o(1) + \mu_n (-8c + o(1)).
	\end{align*}
        Hence $\mu_n \to 0$ as $n \to \infty$. For all $v \in X$, we thus have, by (\ref{eq:J}) and since $\|\nabla_{u_n} \Phi(u_n)\|_{u_n} \to 0$ as $n \to \infty$,
\begin{align*}
q_a(u_n,v)+ B_0(u_n^2,u_n v)  &= \Phi'(u_n)v = o(1) + o(1)  J'(u_n)v \\
		&=  o(1) + o(1) ( 2q_a(u_n,v) + 4 B_0(u_n^2,u_n v)),
\end{align*}
       and therefore 
        \begin{equation}
          \label{eq:J-consequence-mu-n-0}
        q_a(u_n,v) = o(1)- B_0(u_n^2,u_n v) \quad \text{as $n \to \infty$} \qquad \text{for all $v \in X$.}
        \end{equation}
        In particular, this implies that
        $$
        q_a(\hat u, v) = q_a(\hat u_n,v) + o(1) = o(1)- |u_n|_2^2 B_0({\hat{u}_n}^2, {\hat{u}_n} v)
        $$
        and therefore $
        B_0({\hat{u}_n}^2, {\hat{u}_n} v) \to 0.
        $ which implies that
        \begin{align*}
	B_1(\hat{u}_n^2,(\hat{u}_n - \hat{u})^2) &= B_1(\hat{u}_n^2,\hat{u}_n^2) - 2B_1(\hat{u}_n^2,\hat{u}_n \hat{u}) + B_1(\hat{u}_n^2,\hat{u}^2) \\
	&= V_1(\hat{u}_n) - 2B_1(\hat{u}_n^2,\hat{u}_n\hat{u}) + B_1(\hat{u}_n^2,\hat{u}_n\hat{u}) +o(1)\\
	&= V_2(\hat{u}_n) - B_1(\hat{u}_n^2,\hat{u}_n \hat{u}) +o(1) = V_2(\hat{u}_n) - \frac{1}{4} V_1'(\hat{u}_n)\hat{u} + o(1) = o(1).
\end{align*}
Then, from Lemma \ref{sec:comp-cond-2} we see that $\hat{u}_n \to \hat{u}$ in $X$, and therefore        
        $$
\frac{1}{4} V_0'(\hat{u})v= B_0(\hat{u}^2, \hat{u} v) = \lim_{n \to \infty}B_0({\hat{u}_n}^2, {\hat{u}_n} v) = 0 \qquad \text{for all $v \in X$,}
        $$
        which contradicts Lemma~\ref{lemma:V0'notzero}.
        
We thus conclude that $(u_n)_n$ remains bounded in $L^2(\R^2)$. Repeating the argument above with $u_n$ in place of $\hat{u}_n$, we see that there exists $u \in X \setminus \{0\}$ and $g_n \in G$, $n \in \N$ with
$g_n \ast_{\zeta}  u_n \to u$ weakly in $X$ and therefore strongly in $L^2(\R^N)$. Without loss of generality, we may, similarly as before, assume that $g_n = \id$ for all $n \in \N$ in the following, otherwise we replace $u_n$ by $g_n \ast_{\zeta} u_n \in X_0$. As before, we can now deduce, replacing $\hat{u}_n$ by $u_n$, that $\mu_n \to 0$ as $n \to \infty$, and therefore (\ref{eq:J-consequence-mu-n-0}) holds. We test this against $v = u_n - u$ to obtain
\begin{align*}
	o(1) &= \Phi'(u_n)(u_n - u) \\
	&= o(1) + q_a(u_n) - q_a(u) + \frac{1}{4} V_0'(u_n)(u_n - u) \\
	&= o(1) + |\nabla u_n|_2^2 - |\nabla u|_2^2 + \frac{1}{4} \left( (V_1^0)'(u_n)(u_n - u) - (V_2^0)'(u_n)(u_n - u) \right),
\end{align*}
and we have from \eqref{eq:2}
\[
\left| \frac{1}{4} (V_2^0)'(u_n)(u_n - u) \right| = \left| B_2^0(u_n^2, u_n(u_n - u)) \right| \leq |u_n|_{\frac{8}{3}}^3 |u_n - u|_{\frac{8}{3}} \to 0 \quad \text{as} \quad n \to \infty,
\]
and
\[
\frac{1}{4} (V_1^0)'(u_n)(u_n - u) = B_1^0(u_n^2, u_n(u_n - u)) = B_1^0(u_n^2, (u_n - u)^2) + B_1^0(u_n^2, u(u_n - u)),
\]
with
\[
B_1^0u_n^2, u(u_n - u)) \to 0 \quad \text{as} \quad n \to \infty .
\]
by Lemma \ref{lemma:B1weakconvergence}. We combine these estimates to obtain that
\[
o(1) = |\nabla u_n|_2^2 - |\nabla u|_2^2 + B_1^0(u_n^2, (u_n - u)^2) + o(1) \geq |\nabla u_n|_2^2 - |\nabla u|_2^2 + o(1) \geq o(1) \quad \text{as} \quad n \to \infty,
\]
which implies that \(|\nabla u_n|_2 \to |\nabla u|_2\) and \(B_1^0(u_n^2, (u_n - u)^2) \to 0\) as \(n \to \infty\). Hence \(\|u_n - u\|_X \to 0\) by Lemma~\ref{sec:comp-cond-2}, which implies that $u \in  \cN \cap X_0$ since $\cN \cap X_0$ is a closed subset of $X$.
\end{proof}

Combining Proposition~\ref{prop:PSC23} with Remark~\ref{remark:ceramiseq}, we obtain
\begin{corollary}
  \label{verif-NCG0}
Let $(G,G_0)$ be an admissible pair. Then the functional $\Phi:\mathcal{N} \to \R$ satisfies the \ref{NCG0}-condition for all $c \in \R \setminus \{0\}$.
\end{corollary}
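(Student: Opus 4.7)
The plan is to reduce Corollary~\ref{verif-NCG0} directly to Proposition~\ref{prop:PSC23}, using the lower bound on $N$ provided by Remark~\ref{remark:ceramiseq}. So I start with an arbitrary sequence $(u_n)_n \subset \mathcal{N} \cap X_0$ satisfying $\Phi(u_n) \to c$ for some $c \neq 0$ and
$$
\|\nabla_{u_n}\Phi(u_n)\|_{u_n}\bigl(1 + N(u_n)\bigr) \to 0 \qquad \text{as } n \to \infty.
$$
Since $c \neq 0$, the sequence $(u_n)_n$ actually lives in $\mathcal{N}_-$ for large $n$ (because $\Phi \equiv 0$ on $\mathcal{N}_0$ and $\mathcal{N} = \mathcal{N}_- \cup \mathcal{N}_0 \cup \mathcal{N}_+$, with $\Phi$ having a fixed sign on $\mathcal{N}_\pm$; more precisely, on $\mathcal{N}_-$ one has $\Phi = \tfrac{1}{4}q_a = -\tfrac{1}{4}V_0 > 0$). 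Hence Remark~\ref{remark:ceramiseq} applies and yields $N(u_n) \ge |u_n|_2$, so that
$$
\|\nabla_{u_n}\Phi(u_n)\|_{u_n}\bigl(1 + |u_n|_2\bigr) \to 0.
$$

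The hypotheses of Proposition~\ref{prop:PSC23} are then satisfied for $(u_n)_n$, so after passing to a subsequence, there exist $g_n \in G$ and $u \in \mathcal{N}$ with $g_n \ast_\zeta u_n \to u$ strongly in $X$. It remains to verify that $u \in X_0$. For this I invoke the observation~\eqref{eq:G-0-invariance-X_0}: since $G_0$ is a normal subgroup of $G$ and $u_n \in X_0$, each $g_n \ast_\zeta u_n$ still lies in $X_0$. Because $X_0 \subset X$ is closed under $\|\cdot\|_X$ (being the kernel of the continuous $\zeta$-representation of the compact group $G_0$), the strong $X$-limit $u$ belongs to $X_0$ as well. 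This is exactly the conclusion required by \ref{NCG0}.

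There is essentially no obstacle left: the substantive work — compactness modulo the $G$-action in Proposition~\ref{PS-application}, ruling out $L^2$-blow-up via the rescaling argument $\hat{u}_n = u_n/|u_n|_2$, and the strong convergence in $X$ via Lemma~\ref{sec:comp-cond-2} — has all been done in Proposition~\ref{prop:PSC23}. The one point that merits care is the passage $u_n \in X_0 \Rightarrow u \in X_0$, and this is immediate from the normality of $G_0$ combined with strong convergence in $X$ (which preserves the linear closed subspace $X_0$). Hence the corollary follows as a direct two-step consequence.
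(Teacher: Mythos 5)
Your proof is correct and follows the paper's own route exactly: the paper derives the corollary by "combining Proposition~\ref{prop:PSC23} with Remark~\ref{remark:ceramiseq}", which is precisely the chain you spell out — use $N(u_n)\ge|u_n|_2$ from the remark to upgrade the $(NCG_0)_c$-hypothesis into the hypothesis of Proposition~\ref{prop:PSC23}, and then invoke that proposition together with closedness of $X_0$ and the normality of $G_0$ for the final $u\in X_0$ claim (which the paper also records explicitly in the paragraph following the proposition). One minor remark: your aside that the sequence lies eventually in $\mathcal{N}_-$ is only valid for $c>0$ (for $c<0$ it would lie in $\mathcal{N}_+$), but in the intended setting $M_0=\mathcal{N}_-\cap X_0$ the sequence is in $\mathcal{N}_-$ by assumption and only positive levels occur in the application, so this does not affect the argument.
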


\subsection{Completion of the proof of Theorem~\ref{thm:multiplicityN-}}
\label{sec:compl-proof-theor-1}
In this subsection we complete the proof of Theorem~\ref{thm:multiplicityN-}. We start by collecting some useful information on the family of $L^2$-invariant scaling operations
\begin{equation}\label{eq:dilation}
	T_t: X \to X, \qquad T_t(u)(x) = e^{-t} u (e^{-t}x), \qquad t \in \R.
\end{equation}
It is straightforward to see that
\begin{equation} \label{eq:normtransformed}
	|T_t(u)|_{2}^2 = |u|_{2}^2, \qquad |\nabla T_t(u)|_{2}^2 = e^{-2t} |\nabla u|_{2}^2, 
\end{equation}
and 
\begin{align}
	V_0(T_t(u)) &= e^{-4t} \int \int \log |x-y|u^2(e^{-t}x)u^2(e^{-t}y)\,dx dy \nonumber \\
	&=  \int \int \log \bigl(e^{t} |x-y|\bigr) u^2(x)u^2(y)\,dx dy =  V_0(u) +t |u|_{2}^4  \label{eq:transformedV0}
\end{align}
for $t>0$. In particular, for $u \in X$ with $|u|_{2}=1$ we have
$$
V_0(T_t(u))= V_0(u) +t
$$
We also note that, since we assume that $(G,G_0)$ is an admissible pair throughout this section, we have $G_0 \subset O(2)$ and therefore the scaling maps $T_t$ map $(G_0,\zeta)$-invariant functions to $(G_0,\zeta)$-invariant functions, i.e., we have $T_t(X_0) \subset X_0$.

We recall that our aim is to apply Theorem~\ref{c-k-to-infty-variant} to $M = \cN_-$ and the restriction $\Phi\big|_{\cN_-}$ of the functional $\Phi$ defined in (\ref{eq:def-Phi-Choquard}). To estimate the Ljusternik-Schnirelmann levels of $\Phi$ on $\cN_-$, we need an auxiliary variational problem. For this we consider  
$$ \Lambda := \{ u \in X : |u|_2 = 1, \, V_0(u) = 0 \} \qquad \text{and}\qquad \Lambda(c) := \{ u \in \Lambda : |\nabla u|_2^2 \leq c \}\quad \text{for $c>0$.}
$$
These sets are closed subsets of $X$. Now, using the identity \eqref{eq:transformedV0}, we may define the continuous and odd map
\begin{equation}
  \label{eq:fmap}
        f : X \setminus \{0\} \to \Lambda, \qquad f(u)= T_{-V_0(\hat{u})} \hat{u} \quad \text{with}\quad  \hat{u} := u/|u|_2.
\end{equation}
We first note the following.

\begin{proposition}\label{lemma:Sps}
The set $\Lambda(c)$ has finite Krasnoselskii genus (see Section \ref{sec:LS theory}) for every $c \in \R$.
\end{proposition}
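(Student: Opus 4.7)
The strategy is to use the barycenter map $\beta$ to remove the translation-invariance defect, reducing $\Lambda(c)$ to a subset $\tilde\Lambda(c)$ that is bounded in $X$, and then to exploit the compact embedding $X \hookrightarrow L^2(\R^2)$ of Lemma~\ref{sec:comp-cond}(i) via an $L^2$-orthogonal projection onto a finite-dimensional subspace. Concretely, I define $\phi : \Lambda(c) \to X$ by $\phi(u) := u(\cdot + \beta(u))$; since $|u|_2 = 1$ on $\Lambda(c)$, $\beta(u)$ is well-defined. Since $\beta$ is locally Lipschitz on $L^2(\R^2) \setminus \{0\}$ (Theorem~\ref{thm:barycenter}) and $\beta(-u) = \beta(|u|) = \beta(u)$, the map $\phi$ is continuous and odd. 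The functionals $V_0$, $|\cdot|_2$ and $|\nabla \cdot|_2$ are all translation-invariant, hence $\phi(\Lambda(c)) \subset \tilde\Lambda(c) := \{ v \in \Lambda(c) : \beta(v) = 0 \}$. By monotonicity of the Krasnoselskii genus under odd continuous maps, $\gamma(\Lambda(c)) \le \gamma(\tilde\Lambda(c))$, so it suffices to show $\tilde\Lambda(c)$ has finite genus.

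The main step is to prove that $\tilde\Lambda(c)$ is bounded in $X$. The bound $\|v\|_H^2 \le 1 + c$ is immediate from $|v|_2 = 1$ and $|\nabla v|_2^2 \le c$. Moreover, since $V_0(v) = 0$, estimate~\eqref{eq:3} combined with Sobolev embedding gives $V_1^0(v) = V_2^0(v) \le C_0 |v|_{8/3}^4 \le C'$, so $V_1^0$ is uniformly bounded on $\tilde\Lambda(c)$. To control $|v|_*$ I argue by contradiction: assume $v_n \in \tilde\Lambda(c)$ with $|v_n|_* \to \infty$. By Lions' lemma (Remark~\ref{remark:H1boundedsequences}), after a subsequence, either (a) $v_n \to 0$ in $L^p(\R^2)$ for every $p>2$, or (b) there exist $b_n \in \R^2$ with $v_n(\cdot - b_n) \rightharpoonup w \neq 0$ weakly in $H^1$ and strongly in $L^p_{loc}$. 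In case (a), $V_2^0(v_n) \to 0$ and hence $V_1^0(v_n) \to 0$, which forces the probability measures $v_n^2\,dx$ to concentrate into a bounded ball $B_R(x_n)$; this concentration combined with $|\nabla v_n|_2 \le \sqrt{c}$ produces a uniform lower bound on $|v_n|_p$ for $p>2$, contradicting vanishing. In case (b), the crucial subclaim---which I view as the main obstacle---is that $(b_n)$ must be bounded. Granting this, a subsequence satisfies $v_n \rightharpoonup v \neq 0$ in $H^1$ and a.e., and Lemma~\ref{sec:comp-cond-2} applied with both arguments equal to $v_n$ (using the uniform bound on $V_1^0(v_n)$) yields $|v_n|_* \le C$, contradicting $|v_n|_* \to \infty$. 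To prove $(b_n)$ bounded, suppose $|b_n| \to \infty$. If essentially all of the mass of $v_n^2$ concentrates in $B_R(b_n)$ for some fixed $R$, then a direct computation from the definition of $\beta$ yields $\beta(v_n) = b_n + \beta(w) + o(1) \to \infty$, contradicting $\beta(v_n) = 0$. Otherwise a definite fraction of the mass of $v_n^2$ lies at distance $\gtrsim |b_n|$ from $b_n$, and the cross term this mass contributes to $V_1^0(v_n)$ is bounded below by a positive multiple of $\log(1 + |b_n|) \to \infty$, contradicting the uniform bound on $V_1^0$.

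Once $\tilde\Lambda(c)$ is bounded in $X$, the compact embedding of Lemma~\ref{sec:comp-cond}(i) makes $\tilde\Lambda(c)$ relatively compact in $L^2(\R^2)$; its $L^2$-closure $K$ is compact and symmetric, and $0 \notin K$ since $|v|_2 = 1$. I choose an $\eps$-net $\{u_1, \dots, u_k\} \subset K$ with $\eps < 1/2$, set $E := \operatorname{span}\{u_1, \dots, u_k\} \subset L^2$, and let $P : L^2 \to E$ denote the $L^2$-orthogonal projection. Then for every $v \in \tilde\Lambda(c)$, choosing $u_i$ with $|v - u_i|_2 < \eps$ gives $|P(v)|_2 \ge |u_i|_2 - |P(v - u_i)|_2 \ge 1 - \eps > 1/2$. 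The composition
$$
\Lambda(c) \xrightarrow{\phi} \tilde\Lambda(c) \hookrightarrow L^2(\R^2) \xrightarrow{P} E \setminus \{0\} \cong \R^k \setminus \{0\}
$$
is continuous and odd, whence $\gamma(\Lambda(c)) \le k < \infty$.
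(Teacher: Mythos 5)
Your overall strategy is viable and genuinely different from the paper's. The paper works with the Schr\"odinger operator $-\Delta + W$, $W(x)=\log(1+|x|)$, whose discreteness of spectrum ($\lambda_k \to \infty$) supplies a sequence of finite-dimensional spectral projections $P_n$; it sets $\widetilde \Lambda_n := \{u \in \Lambda : P_n(g_{\beta(u)}^{-1}\ast u) \neq 0\}$, observes $\gamma(\widetilde\Lambda_n)\le n$, and shows that any $u_n \in \Lambda(c)\setminus \widetilde\Lambda_n$ would satisfy $\int \log(1+|x-\beta(u_n)|)u_n^2\,dx \ge \lambda_{n+1}-c \to \infty$, which is then ruled out by Lions' lemma and Lemma~\ref{sec:comp-cond-2}. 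You instead recenter via the barycenter map, aim to prove that $\tilde\Lambda(c)$ is a bounded subset of $X$, and then use the compact embedding $X \hookrightarrow L^2(\R^2)$ together with an $\varepsilon$-net and the $L^2$-orthogonal projection onto the span of the net to produce an odd map into $\R^k \setminus\{0\}$. This is a perfectly sound mechanism for bounding the genus of a bounded subset of $X$ staying away from $0$ in $L^2$, and the reduction $\gamma(\Lambda(c)) \le \gamma(\tilde\Lambda(c))$ via the odd continuous map $\phi(u)=u(\cdot+\beta(u))$ is correct.

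However, your argument that $\tilde\Lambda(c)$ is bounded in $X$ has a genuine gap precisely at the step you flag as the ``main obstacle,'' namely the boundedness of $(b_n)_n$. The dichotomy you invoke is not exhaustive: failure of concentration of $v_n^2$ in $B_R(b_n)$ for fixed $R$ does not force a definite fraction of the mass to lie at distance comparable to $|b_n|$ from $b_n$ (it might sit at distances that tend to infinity but are $o(|b_n|)$), so the lower bound $\gtrsim \log(1+|b_n|)$ on the cross term of $V_1^0(v_n)$ does not follow. More importantly, the claim $\beta(v_n)=b_n+\beta(w)+o(1)$ under only weak $H^1$ convergence $v_n(\cdot+b_n)\rightharpoonup w$ is unjustified: the barycenter map $\beta$ of Theorem~\ref{thm:barycenter} is (locally Lipschitz) continuous with respect to \emph{strong} $L^2$ convergence, and weak convergence is not enough. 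The correct route, which the paper itself uses in a nearby spot, is to apply Lemma~\ref{sec:comp-cond-2} \emph{first} to the recentered sequence $\hat v_n := v_n(\cdot + b_n)$: since $V_1^\tau(\hat v_n)=V_1^\tau(v_n)=V_2^\tau(v_n)$ is uniformly bounded (by \eqref{eq:3} and the $H^1$-bound) and $\hat v_n \to w\neq 0$ a.e., that lemma gives $|\hat v_n|_*\le C$, hence $(\hat v_n)_n$ is bounded in $X$, hence by the compact embedding of Lemma~\ref{sec:comp-cond}(i) converges strongly in $L^2$ to $w$ along a subsequence; then, and only then, the continuity of $\beta$ and the equivariance $\beta(\hat v_n)=\beta(v_n)-b_n=-b_n$ force $b_n\to -\beta(w)$. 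Once $(b_n)_n$ is bounded, the estimate $\log(1+|y-b_n|)\le \log(1+|b_n|)+\log(1+|y|)$ immediately yields $|v_n|_*^2 \le C + |\hat v_n|_*^2$, finishing the boundedness claim. Your case (a) argument is also more roundabout than necessary: it is cleaner to observe that $V_0=0$ forces $V_1^\tau=V_2^\tau$ for every $\tau\ge 0$, that vanishing gives $V_2^\tau(v_n)\to 0$ by \eqref{eq:3}, and that this contradicts $V_1^\tau(v_n)\ge\tau|v_n|_2^4=\tau>0$ from \eqref{eq:V1tau}. With these repairs your proof becomes complete and provides an alternative, spectrum-free proof of the proposition.
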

\begin{proof}
  We need some preliminary considerations regarding the Schrödinger operator $-\Delta + W$ with $W(x)= \log(1+|x|)$. Since $W(x) \to \infty$ as $|x| \to \infty$, it follows from classical spectral theory that this operator is a selfadjoint operator in $L^2(\R^2)$ which admits a sequence of eigenvalues
  $$
  \lambda _1 < \lambda _2 \le \lambda_3 \le
  $$
  (counted with multiplicity) and the property that $\lambda_k \to \infty$ as $k \to \infty$. Let $(\phi_k)_k$ be a sequence of associated $L^2$-normalized eigenfunctions, and let $P_n$ be the orthogonal projection on the span $W_n$ of $\phi_1,\dots,\phi_n$ for $n \in \N$.
  We also let
  $$
  \widetilde \Lambda_n:=  \bigl \{u \in \Lambda \::\: P_n \bigl( g_{\beta(u)}^{-1}* u\bigr) \not = 0 \bigr\}
  $$
  Since the map $u \mapsto P_n \bigl( g_{\beta(u)}^{-1}* u\big)$ is odd and continuous and maps into the $n$-dimensional space $W_n$, it follows that $\gamma(\widetilde \Lambda_n) \le n$. Moreover, if $u \in \Lambda \setminus \widetilde \Lambda_n$, then $g_{\beta(u)}^{-1}* u$ is contained in the $L^2$-orthogonal complement of $W_n$, and thus we have
  \begin{align}
  S(u) + \int_{\R^2}\log(1+|x-\beta(u)|)u^2\,dx &= \int_{\R^2}\Bigl(\bigl|\nabla \bigl( g_{\beta(u)}^{-1}* u\bigr)\bigr|^2+ \log(1+|x|)\bigl|g_{\beta(u)}^{-1}* u\bigr|^2\Bigr)\,dx \nonumber\\
  &\ge \lambda_{n+1} |g_{\beta(u)}^{-1}* u|_2^2 = \lambda_{n+1} |u|_2^2 = \lambda_{n+1}.     \label{eq:eigenvalue-comp-estimate}
  \end{align}
  Now, to prove the proposition, we argue by contradiction and assume that $\gamma(\Lambda(c))= \infty$ for some $c>0$. Then we have
  $S^c \not \subset \widetilde \Lambda_n$ for every $n \in \N$, so there exists a sequence $(u_n)_n$ with $u_n \in \Lambda(c) \setminus \widetilde \Lambda_n$ for every $n$, which by (\ref{eq:eigenvalue-comp-estimate}) implies that
  \begin{equation}
    \label{eq:eigenvalue-comp-estimate-1}
  \int_{\R^2}\log(1+|x-\beta(u_n)|)u_n^2\,dx \ge \lambda_{n+1} - S(u_n) \ge \lambda_{n+1} -c \to \infty \quad \text{as $n \to \infty$.}
  \end{equation}
  On the other hand, since $|u_n|_2 = 1$ and $S(u_n) \le c$, the sequence $(u_n)_n$ is bounded in $H^1(\R^2)$. Then, according to Remark \ref{remark:H1boundedsequences}, up to a subsequence, we either have $u_n \to 0$ in $L^s(\R^2)$ for $s>2$ or there are translations $g_n = g_{id,b_n} \in E(2)$ such that $\hat{u}_n := g_n \ast u_n \rightharpoonup u \neq 0$ weakly in $H^1(\R^2)$. We claim that the second alternative is true. Indeed, let us assume by contradiction that $u_n \to 0$ strongly in $L^s(\R^2)$ for $s >2$. Then $V_2^\tau(u_n) \to 0$ for every $\tau \ge 0$ by (\ref{eq:3}), and therefore also $V_1^\tau(u_n) =V_0(u_n)- V_2^\tau(u_n)=- - V_2^\tau(u_n) \to 0$, contrary to the estimate $V_1^\tau(u_n) \geq \tau$ which follows from \eqref{eq:V1tau}.
	
Hence we have $\hat{u}_n := g_n \ast u_n \rightharpoonup u \neq 0$ weakly in $H^1(\R^2)$, as claimed. Moreover, for every $\tau \ge 0$ we have that $B_1^\tau(\hat{u}_n^2,\hat{u}_n^2) = V_1^\tau(\hat{u}_n)= - V_2^\tau(\hat{u}_n)$ remains bounded in $n$ by (\ref{eq:3}). Hence $(\hat{u}_n)_n$ is bounded in $X$ by Lemma~\ref{sec:comp-cond-2}, and it follows that $\hat{u}_n \rightharpoonup u$ weakly in $X$ and therefore strongly in $L^2(\R^2)$ by Lemma~\ref{sec:comp-cond}(i). As a consequence, we have
  $$
  \beta(\hat{u}_n) \to \beta(u) \qquad \text{as $n \to \infty$,}
  $$
so, in particular, the sequence $\beta(\hat{u}_n)$ is bounded. Hence, as in the proof of Lemma~\ref{lemma:scalarproduct}, we see that there exists constants $C_1, C_2>0$ with 
\begin{align*}
  |\hat{u}_n|_*^2 &= \int_{\R^2}\log(1+|x|){\hat u}_n^2\,dx \ge C_1 \int_{\R^2}\log(1+|x-\beta(\hat{u}_n)|){\hat u}_n^2\,dx - C_2 |\hat{u}_n|^2\\
                  &= C_1  \int_{\R^2}\log(1+|x-(\beta(u_n)+b_n)|)u_n^2(x-b_n)\,dx- C_2 |u_n|^2\\
  &= C_1 \int_{\R^2}\log(1+|x-\beta(u_n)|)u_n^2\,dx -C_2
\end{align*}
and therefore $|\hat{u}_n|_*^2 \to \infty$ by (\ref{eq:eigenvalue-comp-estimate-1}). This contradicts the fact that $\hat{u}_n$ is bounded in $X$. The contradiction shows that $\gamma(\Lambda(c))< \infty$ for every $c \in \R$, as claimed. 
\end{proof}

\begin{corollary}\label{cor:ak_unbounded}
Consider the Ljusternik-Schnirelmann type values for the functional $u \mapsto |\nabla u|_2^2$ on $\Lambda$ defined by 
\begin{equation}\label{eq:ak-Lambda}
	a_k := \inf\{c > 0 : \gamma(\Lambda(c)) \geq k\} \quad \in [-\infty,+\infty], \qquad k \in \N.
\end{equation}
Then we have $a_k \le a_{k+1}< \infty$ for every $k \in \N$ and $\lim \limits_{k \to \infty}a_k = +\infty$.
\end{corollary}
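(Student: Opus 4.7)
The plan is to verify the three assertions separately, all by elementary arguments that piggyback on Proposition~\ref{lemma:Sps} and on the odd continuous map $f$ introduced in~\eqref{eq:fmap}.

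First, the monotonicity $a_k \le a_{k+1}$ is immediate from the definition: if $\gamma(\Lambda(c)) \ge k+1$ then in particular $\gamma(\Lambda(c)) \ge k$, so the set over which the infimum in $a_k$ is taken contains the one for $a_{k+1}$, yielding $a_k \le a_{k+1}$.

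Second, to show $a_k<\infty$ for every $k \in \N$, I would fix any $k$ linearly independent functions $\phi_1,\dots,\phi_k \in C^\infty_c(\R^2) \subset X$ and set $V := \mathrm{span}(\phi_1,\dots,\phi_k)$ together with the compact symmetric sphere $S_V := \{u \in V : |u|_2 = 1\}$. Since $f$ is continuous and odd on $X\setminus\{0\}$ with image in $\Lambda$, its restriction $f|_{S_V}: S_V \to \Lambda$ is an odd continuous map, and the identity $|\nabla f(u)|_2^2 = e^{2V_0(u)}|\nabla u|_2^2$ (from \eqref{eq:normtransformed}--\eqref{eq:transformedV0}) shows that $u \mapsto |\nabla f(u)|_2^2$ is continuous on the compact set $S_V$, hence bounded by some $c_V < \infty$. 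Therefore $f(S_V) \subset \Lambda(c_V)$, and by the monotonicity of the Krasnoselskii genus under odd continuous maps,
\[
\gamma(\Lambda(c_V)) \ge \gamma(f(S_V)) \ge \gamma(S_V) = k,
\]
so $a_k \le c_V < \infty$.

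Third, to prove $a_k \to \infty$, I argue by contradiction: if $(a_k)_k$ were bounded by some $M \in \R$, then for every $k \in \N$ we would have $a_k < M+1$, which by the definition of the infimum yields some $c_k \in (0, M+1)$ with $\gamma(\Lambda(c_k)) \ge k$; since $\Lambda(c_k) \subset \Lambda(M+1)$, the monotonicity of the genus gives $\gamma(\Lambda(M+1)) \ge k$ for every $k$, hence $\gamma(\Lambda(M+1)) = \infty$, contradicting Proposition~\ref{lemma:Sps}. No step is really an obstacle here: the whole argument hinges on the finiteness statement of Proposition~\ref{lemma:Sps}, which has already been proved, and on the fact that $f$ transports finite-dimensional spheres into bounded subsets of $\Lambda$, a pure compactness observation.
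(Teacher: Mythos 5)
Your proposal is correct and follows essentially the same route as the paper: monotonicity of $a_k$ from the definition, finiteness of each $a_k$ by pushing a finite-dimensional sphere of genus $k$ through the odd map $f$ into some $\Lambda(c)$ and invoking genus monotonicity, and divergence of $a_k$ as an immediate consequence of Proposition~\ref{lemma:Sps}. The only cosmetic difference is that the paper chooses functions with pairwise disjoint supports and the $\ell^1$-coordinate sphere $Z$, whereas you choose arbitrary linearly independent $\phi_j$ and the $L^2$-unit sphere of their span; both sets are odd-homeomorphic to $\mathbb{S}^{k-1}$, so this changes nothing.
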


\begin{proof}
By definition (\ref{eq:ak-Lambda}), the values $a_k$ form an increasing sequence in $\R \cup \{\pm \infty\}$. Moreover, Lemma~\ref{lemma:Sps} implies that $a_k \to \infty$ as $k \to \infty$. To show that $a_k<\infty$ for every fixed $k \in \N$, it suffices to show that there exists $c > 0$ with $\gamma(\Lambda(c)) \geq k$. Indeed, let $\{\varphi_j\}_{j=1}^k \subset C^\infty_c(\R^2)$ be a set of functions with pairwise disjoint supports. Then, consider the compact and symmetric set
	$$ Z := \left\{ \sum_{j} s_j \varphi_j : \sum_{j} |s_j| = 1 \right\} \cong \mathbb{S}^{k-1}. $$
	Then, $\gamma(Z) = k$, and therefore
	$$
        \gamma(f(Z)) \geq \gamma(Z) = k,
        $$
	for the map $f$ defined in (\ref{eq:fmap}). Since $f(Z)$ is compact, we have $f(Z) \subset \Lambda(c)$ for some $c>0$, which implies that $\gamma(\Lambda(c)) \geq k$ and thus concludes the proof.	
\end{proof}

Now we recall that, in order to apply Theorem~\ref{c-k-to-infty-variant}, we consider the restriction $\Phi|_{M_0}:M_0 \to \R$ of the functional $\Phi$ to the set $M_0:= M \cap X_0= \cN_- \cap X_0$, the corresponding sublevels sets $\Phi^c= \{u \in M_0\::\: \Phi(u) \le c\}$ and the Ljusternik-Schnirelmann values
$$
c_k := \inf \{c>0 : \gamma(\Phi^c) \geq k\}.
$$
Following a standard argument, see \cite{palais_sym}, we see that the \textit{principle of symmetric criticality} holds, i.e. critical points of the restriction $\Phi:X_0 \to \R$ are critical points of the full functional $\Phi : X \to \R$.

\begin{lemma}\label{lemma:ckpositive}
	There exists $c>0$ with $ \gamma(\Phi^{c}) < \infty.$
\end{lemma}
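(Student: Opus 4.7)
The plan is to employ the continuous odd map $f:X\setminus\{0\}\to\Lambda$ from \eqref{eq:fmap} to send $\Phi^c$ into a sublevel set $\Lambda(c')$ of the Dirichlet energy on $\Lambda$, and then to conclude via Proposition~\ref{lemma:Sps} and the monotonicity of the Krasnoselskii genus under odd continuous maps.

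On $M_0\cap\Phi^c = \cN_-\cap X_0\cap\Phi^c$ we have $q_a(u) = -V_0(u) = 4\Phi(u)\in (0, 4c]$, and hence
\[
|\nabla u|_2^2 = q_a(u) - [u]_a^2 \;\le\; q_a(u) + |a|_\infty |u|_2^2 \;\le\; 4c + |a|_\infty |u|_2^2.
\]
Setting $\hat u = u/|u|_2$, $t := |u|_2^2$, $q := q_a(u)\in (0,4c]$, the scaling identities \eqref{eq:normtransformed}--\eqref{eq:transformedV0} give $V_0(\hat u) = V_0(u)/|u|_2^4 = -q/t^2$ and $|\nabla \hat u|_2^2 = |\nabla u|_2^2 / t$, so that
\[
|\nabla f(u)|_2^2 \;=\; e^{2V_0(\hat u)}|\nabla \hat u|_2^2 \;\le\; e^{-2q/t^2}\!\left(\tfrac{q}{t} + |a|_\infty\right).
\]

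The only delicate point, and the main obstacle, is that $|u|_2$ — hence $t$ — is not a priori controlled on $\Phi^c$, because the indefinite quadratic form $q_a$ controls only $|\nabla u|_2^2 - |a|_\infty |u|_2^2$. I will overcome this by a one-variable maximization in $t>0$ at $q$ fixed: the map $t\mapsto e^{-2q/t^2}q/t$ attains its maximum $\sqrt{q}/(2\sqrt{e})$ at $t=2\sqrt{q}$, so $|\nabla f(u)|_2^2 \le \sqrt{c/e} + |a|_\infty =: c'$ uniformly for $u\in \Phi^c$. Since each $T_t$ preserves the $(G_0,\zeta)$-invariance, $f$ restricts to a continuous odd map $\Phi^c \to \Lambda(c')$. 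By Proposition~\ref{lemma:Sps} and the standard monotonicity of the Krasnoselskii genus we conclude $\gamma(\Phi^c) \le \gamma(\Lambda(c')) < \infty$. In fact the above argument yields this bound for every $c>0$, which is stronger than the lemma's assertion.
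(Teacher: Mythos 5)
Your proof is correct, and it takes a genuinely different route than the paper, although both use the same two ingredients: the odd continuous map $f$ from \eqref{eq:fmap} and the finite-genus result Proposition~\ref{lemma:Sps} for the sublevel sets $\Lambda(c')$.

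The crucial difference lies in how the factor $e^{2V_0(\hat u)}$ (which equals $e^{-2q_a(u)/|u|_2^4}$ on $\cN_-$) is treated. The paper simply discards it via $e^{2V_0(\hat u)} \le 1$, obtaining only $|\nabla f(u)|_2^2 \le |\nabla \hat u|_2^2$; since this gives no control when $|u|_2$ is small, the paper must argue by contrapositive: it fixes $k$ with $a_k > |a|_\infty$, assumes $\gamma(\Phi^c) \ge k$, extracts $w \in \Phi^c$ with $|\nabla \hat w|_2^2 > \alpha > |a|_\infty$, deduces $q_a(w) \gtrsim \|w\|_H^2$, and then pits this against the upper bound $q_a(w) = -V_0(w) \le V_2(w) \lesssim \|w\|_H^4$ to conclude $\|w\|_H$ (hence $c$) is bounded below. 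This yields $\gamma(\Phi^c) < k$ only for $c$ in a small interval $(0, c_k)$. You instead retain the exponential factor and observe that, with $q := q_a(u) \in (0,4c]$ and $t := |u|_2^2$, the function $t \mapsto e^{-2q/t^2}q/t$ is uniformly bounded by $\sqrt{q}/(2\sqrt{e})$; this absorbs precisely the blowup of $|\nabla \hat u|_2^2$ as $|u|_2 \to 0$. The resulting uniform bound $|\nabla f(u)|_2^2 \le \sqrt{c/e} + |a|_\infty$ shows directly $f(\Phi^c) \subset \Lambda(\sqrt{c/e}+|a|_\infty)$, and genus monotonicity gives $\gamma(\Phi^c) < \infty$ for \emph{every} $c > 0$. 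Your approach is shorter, avoids the auxiliary LS-values $a_k$ and the two-sided estimates on $q_a(w)$, does not need the choice $a_k > |a|_\infty$, and yields a stronger conclusion; it exposes more clearly that the dilation built into $f$ is what controls the genus. One small remark: the preservation of $(G_0,\zeta)$-invariance by $T_t$ is not actually needed for the genus inequality $\gamma(\Phi^c) \le \gamma(\Lambda(c'))$, since $\Lambda(c')$ is not required to be $G_0$-invariant — the genus is monotone under any odd continuous map between symmetric sets.
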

\begin{proof}
  Since the sequence $(a_k)_k$ considered in Corollary~\ref{cor:ak_unbounded} satisfied $\lim \limits_{k \to \infty}= +\infty$, we may fix $k \in \N$ with $a_k > |a|_\infty$. We define $\alpha := (a_k +|a|_\infty)/2 > 0$, and we let $c > 0$ be such that $\gamma(\Phi^c) \geq k$. If no such $c$ exists, then the lemma is proved. Now, we recall that $\Phi^c \subset \mathcal{N}_-$, so we have $V_0(u) \leq 0$ for all $u \in \Phi^c$.
With $f$ defined in (\ref{eq:fmap}), we thus have 
$$
\gamma(f(\Phi^c)) \geq \gamma(\Phi^c) \geq k
$$
and therefore 
$$ \sup_{u \in \Phi^c}\|\nabla f(u)\|^2 \geq a_k.
$$
by definition of $a_k$. In particular, there exists $w \in \Phi^c$ with
$$
\alpha \leq \|\nabla f(w)\|_2^2 \|\nabla (w/|w|_2)\|_2^2,
$$
where we used \eqref{eq:normtransformed} and the fact that $V_0(w) \leq 0$ in the last inequality. It then follows that 
 $$ q_a(w) = | \nabla w|_2^2 + [w]_a^2 \geq (\alpha - |a|_\infty) |w|_2^2, $$
and for $\varepsilon > 1$ we have
 \begin{align*}
 	\varepsilon q_a(w) = q_a(w) + (\varepsilon -1) q_a(w) \geq (\alpha -\varepsilon |a|_\infty)|w|_2^2 + (\varepsilon -1) |\nabla w|_2^2.
 \end{align*}
 Fixing $\eps>1$ with $\alpha -\varepsilon |a|_\infty>0$ in this inequality, we thus find a constant $c_a > 0$, depending only on $a$ and $k$, such that $q_a(w) \geq c_a \|w\|_H^2$. On the other hand, we have, by Sobolev embeddings,
 $$ q_a(w) = -V_0(w) = V_2(w) - V_1(w) \leq V_2(w) \leq C |w|_{8/3}^4 \leq C \|w\|_H^4. $$
 Combining these inequalities, we find that $0<C' \leq \|w\|_H$ for some constant $C''$ depending only on $a$ and $k$. Thus, there is $C'' > 0$ depending only on $a$ and $k$ such that
 $$ c \geq \Phi(w) = q_a(w)/4 \geq C'' > 0. $$
 It follows that $c_k \geq C'' > 0$. Hence, for all $c \in (0,c_k)$ we conclude that $\gamma(\Phi^c) < k$ and then the lemma is valid for all $c_* \in (0,c_k)$.
\end{proof}

\begin{lemma}\label{lemma:genus_unbounded}
We have 
	$$ \lim_{c \to \infty} \gamma(\Phi^c) = \infty .$$
\end{lemma}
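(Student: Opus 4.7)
My plan is to prove that for every $k \in \N$ there exist $c_k < \infty$ and a compact symmetric set $K_k \subset \Phi^{c_k} \cap M_0$ with Krasnoselskii genus $\gamma(K_k) \ge k$. Since $c \mapsto \gamma(\Phi^c)$ is monotone nondecreasing, this yields $\gamma(\Phi^c) \to \infty$ as $c \to \infty$.

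First I would fix $k$ linearly independent compactly supported functions $\varphi_1,\ldots,\varphi_k \in X_0$. In each admissible case these are obtained by a routine $G_0$-symmetrization (with sign weights $\zeta(g)$ when $\zeta \not\equiv 1$) of bump functions whose $G_0$-orbits have pairwise disjoint supports: $G_0$-symmetrized bumps in disjoint annuli for \ref{G1}, translates of a single bump for \ref{G3}, and reflection-symmetrized bumps in disjoint translates of a fundamental strip for \ref{G4}. Set $E_k := \operatorname{span}\{\varphi_1,\ldots,\varphi_k\}$; the $L^2$-unit sphere $S_k := \{u \in E_k : |u|_2 = 1\}$ is then a compact symmetric subset of $X_0 \setminus \{0\}$ homeomorphic to $\mathbb{S}^{k-1}$, so $\gamma(S_k) = k$. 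A direct calculation shows that $T_t$ commutes with every $g_{A,0} \ast_\zeta$ for $A \in O(2)$, and since $G_0 \subset O(2)$ in every admissible case, each $T_t$ preserves $X_0$. The odd continuous map $f$ from \eqref{eq:fmap} then sends $S_k$ to a compact symmetric subset $S := f(S_k) \subset \Lambda \cap X_0$ with $\gamma(S) \ge k$.

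The central step is to lift $S$ into $\cN_- \cap X_0$ with $\Phi$ uniformly bounded. For $v \in \Lambda$ and $t \in \R$, formulas \eqref{eq:normtransformed} and \eqref{eq:transformedV0} give
\begin{equation*}
V_0(T_t v) = t \qquad \text{and} \qquad q_a(T_t v) \ge e^{-2t}|\nabla v|_2^2 - |a|_\infty.
\end{equation*}
Since any $v \in \Lambda$ has $|v|_2 = 1$ and therefore $|\nabla v|_2 > 0$, while $S$ is compact in $X$, the infimum $c_0 := \inf_{v \in S}|\nabla v|_2^2$ is strictly positive. Choose $t = t_k < 0$ with $e^{-2t_k} c_0 > 2|a|_\infty$; then $q_a(T_{t_k} v) > 0 > V_0(T_{t_k} v)$ uniformly in $v \in S$, so $T_{t_k} v$ lies in the set $\mathcal O$ of Remark~\ref{remark:propertiesg} and the scaling $\psi(v) := \sigma(T_{t_k} v)$ defines an odd continuous map $\psi : S \to \cN_- \cap X_0$. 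Using $t_u^2 = -q_a(u)/V_0(u)$, one computes
\begin{equation*}
\Phi(\psi(v)) \;=\; -\frac{q_a(T_{t_k} v)^2}{4\, V_0(T_{t_k} v)} \;=\; -\frac{q_a(T_{t_k} v)^2}{4\, t_k},
\end{equation*}
which is uniformly bounded on the compact set $S$. Setting $K_k := \psi(S)$ and $c_k := \sup_{v \in S} \Phi(\psi(v)) < \infty$ then gives $\gamma(K_k) \ge \gamma(S) \ge k$ and $K_k \subset \Phi^{c_k} \cap M_0$, as desired.

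The main subtlety I expect is producing a single $t_k$ that works uniformly in $v \in S$, which amounts to the positivity of $\inf_{v \in S}|\nabla v|_2^2$. This rests on compactness of $S$ together with the trivial observation that $\Lambda$ avoids the set $\{v : |\nabla v|_2 = 0\}$, since on $\R^2$ constant $L^2$-functions must vanish, contradicting $|v|_2 = 1$. If instead one had to let $t$ drift with $v$, the factor $e^{-2t}$ appearing in $q_a(T_t v)$ could force $\Phi \circ \psi$ to blow up on $S$, and the energy bound would break.
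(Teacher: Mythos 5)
Your proof is correct, and it takes a genuinely different route from the paper. The paper's argument is anchored on the \emph{disjoint-support} structure: it fixes $(G_0,\zeta)$-invariant bump functions $\varphi_1,\dots,\varphi_{k+1}\in C_c^\infty(B_{1/2}(0))$ with pairwise disjoint supports, so that for any coefficients $(s_j)$ with $\sum_j |s_j|=1$ one has $q_a(\sum_j s_j\varphi_j)=\sum_j s_j^2\,q_a(\varphi_j)$ and $V_0(\sum_j s_j\varphi_j)<0$ automatically because all supports lie in $B_{1/2}(0)$ (where $\log|x-y|<0$). It then rescales each $\varphi_j$ individually by $T_t$ with $t<0$ to make $q_a(T_t\varphi_j)>0$; since $t<0$ the supports stay inside $B_{1/2}(0)$, so both sign conditions survive for all linear combinations in the $\ell^1$-sphere, and $\sigma$ can be applied pointwise. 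Your argument dispenses with the disjoint-support and small-ball requirements entirely: you take an arbitrary $k$-dimensional $(G_0,\zeta)$-invariant subspace of compactly supported functions, push its $L^2$-unit sphere $S_k$ into $\Lambda$ via the odd continuous map $f$ from \eqref{eq:fmap}, and then exploit compactness of $S=f(S_k)$ to produce a \emph{single} scaling parameter $t_k<0$ that simultaneously forces $q_a(T_{t_k}v)>0$ and $V_0(T_{t_k}v)=t_k<0$ for every $v\in S$. The uniform positivity $\inf_{v\in S}|\nabla v|_2^2>0$, which you correctly justify by compactness of $S$ together with the absence of nonzero constants in $L^2(\mathbb R^2)$, is exactly what replaces the paper's algebraic cancellation from disjoint supports. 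Both approaches then conclude identically, via $\sigma$, the monotonicity of the Krasnoselskii genus under odd continuous maps, and the observation that $T_t$ preserves $X_0$ because $G_0\subset O(2)$ in every admissible case. Your route is somewhat more conceptual (it reuses the map $f$ already built for Lemma~\ref{lemma:Sps}), while the paper's is more hands-on and avoids any compactness argument.
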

\begin{proof}
	Let $\varphi_1, \dots,\varphi_{k+1} \in  C_c^\infty(B_{1/2}(0))$ be $(G_0,\zeta)$-invariant functions with pairwise disjoint supports. Then, from the the definition of $V_0$ we see that $V_0(\varphi_j) < 0$. Now, we recall the transformation $T_t$ in \eqref{eq:dilation} and see that
	$$ q_a(T_t \varphi_j) = e^{-2t}| \nabla \varphi_j |_2^2 + [T_t \varphi_j]_a^2, $$
	where $|[T_t \varphi_j]_a^2| \leq |a|_\infty |\varphi_j|_2^2$, so this term is bounded for all $t$. Thus, we may choose, $t<0$ that satisfies $q_a(T_t \varphi_j) > 0$ for all $j =1,\ldots,k+1$ and since $t<0$ each $T_t \varphi_j$ is still compactly supported in $B_{1/2}(0)$. To keep the notation simple, we substitute the symbols $T_t \varphi_j$ with $\varphi_j$. This functions still have pairwise disjoint supports, and thus for $s_1,\dots,s_{k+1} \in \R$ with $\sum \limits_{j = 1}^{k+1} |s_j| = 1$ we have
	$$  q_a\left(\sum_{j } s_j \varphi_j\right) = \sum_j s_j^2 q_a(\varphi_j) > 0, $$
	and, since $\sum_{j } s_j\varphi_j$ is compactly supported in the ball $B_{1/2}(0)$ independent on the coefficients $(s_j)_j$, we have
	$$ V_0\left(\sum_{j } s_k \varphi_j \right) < 0.
        $$
Recalling the definition of the set $\mathcal{O}$ and the map $\sigma$ from Remark~\ref{remark:propertiesg}, we thus find that  $\sum_j s_j \varphi_j \in \mathcal{O}$. To conclude, we observe that
	$$ Z := \left\{ \sum_j s_j \varphi_j \in X : \sum_j |s_j| = 1 \right\} \cong  \mathbb{S}^{k} $$
	and we define 
	$$c:= \sup_{u \in \sigma(Z)} \Phi(u) < \infty.$$
	By the monotonicity of the genus, we see that
	$$ k = \gamma(Z) \leq \gamma(\sigma(Z)) \leq \gamma(\Phi^c),$$
	which proves the claim by letting $k \to \infty$.
\end{proof}

\begin{proof}[Proof of Theorem~\ref{thm:multiplicityN-} (completed)]
To verify the theorem we need only apply Theorem~\ref{c-k-to-infty-variant} to the restriction $\Phi : M_0 \to \R$ with the action $\ast = \ast_{\zeta}$ and the notation introduced in the beginning of this section. Here, we set $c_\infty = \infty$ and we fix $c_*>0$ according to Lemma \ref{lemma:ckpositive}. The conditions \ref{Gstar} and \ref{Ic'} are proved in Lemma \ref{example-conditions-CC-G-*} and Lemma \ref{i-g-compatible}, respectively. The remaining part of the assumptions $(i)-(iv)$ are verified in Lemmas \ref{lemma:dcomplete}, \ref{lemma:ckpositive}, \ref{lemma:genus_unbounded} and Proposition \ref{prop:PSC23}, respectively.

In the particular case $\einf_{\R^2} a > 0$ we can find a constant $C > 0$ such that
\begin{equation}\label{eq:q_a_bound}
	q_a(u) \geq C \|u \|_{H}^2 \geq 0 \qquad \text{for all } u \in X.
\end{equation}
Now, we see from \eqref{eq:3} and the Sobolev embeddings that
\begin{align*}
	\Phi'(u)u &= q_a(u) + V_0(u) \geq C \| u \|_{H}^2 - V_2(u) \geq C'( \| u \|_{H}^2 + |u|_{8/3}^4 )\\
	& \geq C'' (\| u\|_{H}^2 - \| u\|_{H}^4) 
\end{align*}
for some constants $C',C'' > 0$. Thereby $\Phi'(u)u > 0$ for all $u$ with small enough $H^1$-norm. Then, it follows that
$$ \inf_{u \in \mathcal{N}} \|u\|_{H} > 0.$$
This inequality and \eqref{eq:q_a_bound} imply that $\mathcal{N} = \mathcal{N}_-$. Now, since
$$ \Phi(u) = \frac{1}{4} q_a(u) \geq C \| u \|_{H}^2 \quad \forall u \in \mathcal{N},$$
we conclude that $\Phi$ is bounded from below on $\mathcal{N}$.

Then, in this case we apply Theorem \ref{c-k-to-infty-variant} to the restriction $\Phi: M_0 \to \R$ with $c_* = \inf_{u \in M_0} \Phi(u)$ and the rest of the data as above. Observe that $\Phi^{c_*} = K_{c_*}$. Theorem \ref{c-k-critical-value} shows that the values $c_k$ are critical for $k > \gamma(K_{c_*})$. To conclude, we only observe that if $k \leq \gamma(K_{c_*})$, then we have $c_k = \inf_{u \in M_0} \Phi(u)$ and from Proposition \ref{c-k-critical-value} we see that this is a critical value as well. Since $M_0 = \mathcal{N} \cap X_0$ contains all nontrivial $G_0$-invariant critical points, the result follows. 
\end{proof}

\bigskip
\noindent
{\bf Acknowledgments.}
O. Cabrera Chavez was fully supported by DAAD Research Grants - Doctoral Programmes in Germany, 2020/21 (57507871).
Silvia Cingolani is supported by PRIN PNRR  P2022YFAJH {\sl \lq\lq Linear and Nonlinear PDEs: New directions and applications''} (CUP H53D23008950001), and partially supported by INdAM-GNAMPA.

\bigskip

\end{document}